\theoremstyle{plain}
\newtheorem{thm}{Theorem}[section]
\newtheorem{prop}[thm]{Proposition}
\newtheorem{lemma}[thm]{Lemma}
\theoremstyle{definition}
\newtheorem{defn}[thm]{Definition}
\newtheorem*{exa}{Example}
\theoremstyle{remark}
\newtheorem*{rmk}{Remark}
\newtheorem*{rmks}{Remarks}
\newcommand{\C}{\mathbb{C}}
\newcommand{\Hb}{\mathbb{H}}
\newcommand{\Z}{\mathbb{Z}}
\newcommand{\Q}{\mathbb{Q}}
\newcommand{\N}{\mathbb{N}}
\newcommand{\R}{\mathbb{R}}
\newcommand{\slz}{{\text {\rm SL}}_2(\mathbb{Z})}
\newcommand{\re}{\textnormal{Re}}
\newcommand{\vt}[1]{\left\lvert #1 \right\rvert}
\DeclareMathOperator{\sgn}{sgn}
\newcommand{\dm }{\mathrm{d}}
\newcommand{\Ac}{\mathcal{A}}
\newcommand{\Dc}{\mathcal{D}}
\newcommand{\Fc}{\mathcal{F}}
\newcommand{\Gc}{\mathcal{G}}
\newcommand{\Hc}{\mathcal{H}}
\newcommand{\Kc}{\mathcal{K}}
\newcommand{\Nc}{\mathcal{N}}
\newcommand{\Qc}{\mathcal{Q}}
\newcommand{\Hs}{\mathscr{H}}
\newcommand{\Ks}{\mathscr{K}}
\newcommand{\af}{\mathfrak{a}}
\newcommand{\cf}{\mathfrak{c}}
\newcommand{\ff}{\mathfrak{f}}
\newcommand{\rf}{\mathfrak{r}}
\newcommand{\sfr}{\mathfrak{s}}
\newcommand{\id}{\mathrm{id}}
\numberwithin{equation}{section}
\setlist{nosep}
\setlist{noitemsep}
\let\@@pmod\pmod
\DeclareRobustCommand{\pmod}{\@ifstar\@pmods\@@pmod}
\def\@pmods#1{\mkern4mu({\operator@font mod}\mkern 6mu#1)}
\title{A modular framework for generalized Hurwitz class numbers I}
\author{Olivia Beckwith}
\address{Department of Mathematics, Tulane University, New Orleans, LA 70118}
\email{obeckwith@tulane.edu}
\author{Andreas Mono}
\address{\textnormal{\textit{Current address:}} Department of Mathematics, Vanderbilt University, 1326 Stevenson Center, Nashville TN 37240, USA \newline \indent \textnormal{\textit{Former address:}} Department of Mathematics and Computer Science, Division of Mathematics, University of Cologne, Weyertal 86--90, 50931 Cologne, Germany}
\email{andreas.mono@vanderbilt.edu}
\begin{document}

\begin{abstract}
We discover a non-trivial relation between the mock modular generating functions of the level $1$ and level $N$ Hurwitz class numbers. This relation yields a holomorphic modular form of weight $\frac{3}{2}$ and level $4N$, where $N > 1$ is stipulated to be odd and square-free. We extend this observation to a non-holomorphic framework and obtain a higher level non-holomorphic Zagier Eisenstein series as well as a preimage $\Gc$ of it under the differential operator $\xi_{\frac{1}{2}}$. All of these observations are deduced from a more general inspection of a certain weight $\frac{1}{2}$ Maass--Eisenstein series of level $4N$ at its spectral point $s=\frac{3}{4}$. This idea goes back to Duke, Imamo\={g}lu and T\'{o}th in level $4$ and relies on the theory of so-called sesquiharmonic Maass forms. We calculate the Fourier expansion of $\Gc$ and $\xi_{\frac{1}{2}}\Gc$. We conclude by offering examples if $N=5$ or $N=7$ as well as some questions for future work.
\end{abstract}

\subjclass[2020]{11F37 (Primary); 11F12, 11F30 (Secondary)}

\keywords{Eisenstein series, Hurwitz class numbers, Maass forms, Modular forms}

\thanks{The first author was partially supported by Simons Foundation Collaboration Grant \#953473 and National Science Foundation Grant DMS-2401356. Parts of the second author's research on this project was supported by the CRC/TRR 191 ``Symplectic Structures in Geometry, Algebra and Dynamics'', funded by the DFG (project number 281071066).}

\maketitle

\section{Introduction and statement of results}

\subsection{Hurwitz class numbers - holomorphic setting}

Since Shimura's seminal paper \cite{shimura75annals} laid the foundations of the theory of half integral weight modular forms, a natural goal in the field has been finding explicit analytic constructions. Aside from half integral weight holomorphic examples constructed using the Jacobi theta function $\theta$ (see \eqref{eq:thetadef}) or the Dedekind eta function, one of the earliest examples are the Cohen--Eisenstein \cite{cohen75} series of weights greater than $\frac{3}{2}$, whose coefficients are Dirichlet $L$-values $L(s,\chi_D)$ at integral values of $s$. In weight $\frac{3}{2}$, Cohen's coefficients are the Hurwitz class numbers $H(n)$, which generate a so-called mock modular form instead of a modular form (see Subsection \ref{subsec:intrononhol} for a description of mock modular forms). The study of Hurwitz class numbers can be traced back to Gauss, who observed the connection with the sums of three squares problem. To be more specific, we have
\begin{equation}\label{eq:threesquares}
\theta^3(\tau) = 12 \sum_{ n \ge 0} (H(4n) - 2 H(n)) q^n, \qquad q \coloneqq e^{2\pi i \tau}, \qquad \tau \in \Hb \coloneqq \left\{\tau = u+iv \colon v > 0\right\}.
\end{equation}
Since Gauss, Hurwitz class numbers have found numerous applications throughout number theory. Hurwitz himself showed that they admit a class number formula paralleling Dirichlet's famous formula, which was recently extended to the case of indefinite quadratic forms \cite{dit21jems}. Their mock modular properties give rise to recurrence formulas which have been extensively studied \cites{bruschw, mertens16, mertens14, murakami}. Their divisibility properties have been studied in a handful of recent papers \cites{brika2, BOR1, BOR2, BOR3, beckwith2017} and used to study class numbers of imaginary quadratic number fields. They also are known to count isomorphism classes of elliptic curves over finite fields with specified numbers of points \cite{schoof87}, which was recently used to compute distributions of Gaussian hypergeometric functions \cite{ono-saad-saikia}. Other recent appearances of the Hurwitz class numbers involve connections to formulas involving real quadratic number fields \cite{lischw} and shifted convolution $L$-series \cite{walker}. 

The formula \eqref{eq:threesquares} can be viewed as a modular modification of the mock modular Hurwitz generating function. Modular forms similar to \eqref{eq:threesquares} were constructed in \cite{brika1}*{Lemma 2.6}. However, this approach leads to a twist by a non-trivial character and a change of the level of the form. Independent from that, \cite{beckwith2017} studied modular Hurwitz generating functions with indices restricted to given square classes using quadratic twisting arguments.

Cohen's work was extended in level aspect by Pei and Wang \cite{peiwang} in $2003$, who constructed bases for the Eisenstein plus subspace of weight $\frac{3}{2}$ on $\Gamma_0(4N)$ with $N$ odd and square-free. To describe this briefly, we let $\ell \mid N$ and define the \emph{generalized Cohen--Eisenstein series}
\begin{align} \label{eq:Hsdef}
\Hs_{\ell,N}(\tau) \coloneqq \sum_{n \geq 0} H_{\ell,N}(n) q^n,
\end{align}
where we refer to the numbers $H_{\ell,N}$ as \emph{generalized Hurwitz class numbers}. Due to their technical nature, we define these numbers in \eqref{eq:HNNdef} and \eqref{eq:HellNdef} below. As the terminology suggests, these numbers are higher level analogs of the classical Hurwitz class numbers. This is justified by a result from part II of this series \cite{bemo2}*{Theorem 1.4, 6.1 (i)}, where we show that
\begin{align*}
\sum_{Q \in \Qc_{p,n} \slash \Gamma_0(p)} \frac{1}{\vt{\Gamma_0(p)_Q}} &= 4H_{1,1}(-n) - 2H_{p,p}(-n)
\end{align*}
whenever $n < 0$ with $n \equiv 0,1 \pmod*{4}$ and $N=p$ is an odd prime. Here, we use
\begin{align*}
\Qc_{N,n} &\coloneqq \left\{ax^2+bxy+cy^2 \colon a,b,c \in \Z, \ N \mid a, \ b^2-4ac=n \right\}
\end{align*}
for such $n$. In addition, we have $H(n) = H_{1,1}(n)$ and the generalized Hurwitz class numbers appeared in \cites{luozhou, mmrw} very recently.

The extra parameters $\ell$ and $N$ allow a more refined inspection of the non-modularity of the Hurwitz class number generating function $\Hs_{1,1}$. More precisely, it turns out that $\Hs_{1,N}$ is not modular like $\Hs_{1,1}$. However, the functions $\Hs_{\ell,N}$ for $1 < \ell \mid N$ are modular and were the main objects of study in \cites{peiwang, luozhou}. In view of the non-modularity of both $\Hs_{1,1}$ and $\Hs_{1,N}$, we offer the following result.

\begin{thm} \label{thm:mainholomorphic}
Let $N > 1$ be odd and square-free. Then, the linear combination
\begin{align*}
\sum_{n \geq 0} H_{1,1}(n)q^n  - \Big(\prod_{\substack{p \text{ prime} \\ p \mid N}} \frac{p+1}{p} \Big) \sum_{n \geq 1} H_{1,N}(n)q^n
\end{align*}
is a modular form of weight $\frac{3}{2}$ on $\Gamma_0(4N)$. If $N=p$ is an odd prime then
\begin{align*}
\sum_{n \geq 0} H_{1,1}(n)q^n  - \frac{p+1}{p} \sum_{n \geq 1} H_{1,p}(n)q^n = \frac{1}{1-p} \Hs_{p,p}(\tau).
\end{align*}
\end{thm}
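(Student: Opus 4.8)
The plan is to prove the first assertion by a shadow-cancellation argument and then to deduce the prime-case identity by identifying the resulting holomorphic form inside the Eisenstein plus space. First I would introduce the non-holomorphic completions $\widehat{\Hs}_{1,1}$ and $\widehat{\Hs}_{1,N}$, i.e. the harmonic Maass forms of weight $\frac{3}{2}$ whose holomorphic parts are the mock modular generating functions $\sum_{n} H_{1,1}(n)q^n$ and $\sum_{n} H_{1,N}(n)q^n$; the former transforms on $\Gamma_0(4)$ and the latter on $\Gamma_0(4N)$. The crucial input is that the shadow of each is a weight $\frac{1}{2}$ unary theta series: $\xi_{3/2}\widehat{\Hs}_{1,1}$ is an explicit constant multiple of $\Theta(\tau)$, and I would show that $\xi_{3/2}\widehat{\Hs}_{1,N}$ is a constant multiple of the \emph{same} $\Theta(\tau)$, now viewed on $\Gamma_0(4N)$, with the two normalizing constants standing in the ratio dictated by $\prod_{p\mid N}\frac{1}{p+1}$ and $\frac{1}{N}$.

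With the shadows in hand, I would form the combination $\prod_{p \mid N}\frac{1}{p+1}\,\widehat{\Hs}_{1,1} - \frac{1}{N}\,\widehat{\Hs}_{1,N}$, which is a harmonic Maass form of weight $\frac{3}{2}$ on $\Gamma_0(4N)$ (the level-$4$ form being an oldform at level $4N$). Applying $\xi_{3/2}$ and invoking the shadow computation, the two theta contributions cancel exactly for these coefficients, so the combination is annihilated by $\xi_{3/2}$. A weight $\frac{3}{2}$ harmonic Maass form with vanishing shadow is holomorphic, hence a holomorphic modular form; since the non-holomorphic Eichler integrals also cancel, both being attached to multiples of $\Theta$, what remains is precisely the stated $q$-series, which proves the first assertion.

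For the prime case, note that when $N$ is prime the combination just constructed, rescaled by $N+1$, reads $\sum_{n} H_{1,1}(n)q^n - \frac{N+1}{N}\sum_{n\geq 1} H_{1,N}(n)q^n$ and lies in the holomorphic Eisenstein plus subspace of weight $\frac{3}{2}$ on $\Gamma_0(4N)$. I would then locate it inside Pei and Wang's basis, where $\Hs_{N,N}$ is the genuinely modular generalized Cohen--Eisenstein series attached to the single divisor $1 < N \mid N$. To show the two sides agree up to the scalar $\frac{1}{1-N}$, I would match Fourier coefficients up to the Sturm bound, pinning down the scalar from a single coefficient, most cleanly the constant term using $H_{1,1}(0) = -\frac{1}{12}$ together with the Pei--Wang normalization of $\Hs_{N,N}$, and then check that no stray oldform component (of $\Theta^3$-type) survives.

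The hard part will be the second step: computing the shadow of $\widehat{\Hs}_{1,N}$ and verifying that it is the same unary theta $\Theta$, rather than a genuinely higher-level combination, with exactly the normalization that makes the stated coefficients cancel it against the level-$4$ shadow. I expect this to follow from the specialization of the weight $\frac{1}{2}$ Maass--Eisenstein series of level $4N$ at $s = \frac{3}{4}$ announced in the abstract, combined with the action of the $U_p$/$V_p$ and Atkin--Lehner operators relating level $4$ to level $4N$; the arithmetic factors $\prod_{p\mid N}\frac{1}{p+1}$ should emerge precisely as the Euler factors produced by this level-raising.
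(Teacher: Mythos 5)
Your high-level architecture is sound and is, in spirit, the paper's: the first assertion is equivalent to the existence of a harmonic completion of $\sum_{n\geq 1}H_{1,N}(n)q^n$ on $\Gamma_0(4N)$ with shadow a specific multiple of $\Theta$ (this is Theorem \ref{thm:mainnonholomorphic}), and the prime case follows from Pei--Wang's result that $E_{\frac{3}{2}}^+(4N)$ is one-dimensional, with the scalar pinned down by the constant term $H_{1,1}(0)=-\frac{1}{12}$ against $H_{N,N}(0)=-\frac{1}{12}\prod_{p\mid N}(1-p)$. That part of your plan matches the paper. The problem is that the entire substance of the theorem sits inside the step you defer as ``the hard part'': you assume that the completion $\widehat{\Hs}_{1,N}$ exists and that its shadow is $\Theta$ with normalizing constant standing in the ratio $N\prod_{p\mid N}\frac{1}{p+1}$ to the level-$4$ one. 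Neither the existence nor the constant is known a priori, and proving them is exactly what occupies Sections \ref{sec:Kloostermanzeta}--\ref{sec:proofof1.3part3}: one must evaluate the level-$4N$ Kloosterman zeta function $\Ks_{\frac{1}{2}}(0,n;2s)$ at $s=\frac{3}{4}$ via Shimura--Sturm and Maass' local computations, identify the resulting weight-$\frac{1}{2}$ local factors $A(p,n)$ with Pei--Wang's weight-$\frac{3}{2}$ factors $\Ac_1(p,-n)$ (Lemma \ref{lem:peiwangconnection}(i)), and then invoke the partial-fraction identity $\prod_{p\mid N}\Ac_1(p,n)=\frac{1}{N}\sum_{\ell\mid N}\ell\prod_{p\mid\ell}\bigl(\Ac_1(p,n)-\frac{1}{p}\bigr)$ together with Pei--Wang's closed formula for $H_{\ell,N}(n)$ (Lemmas \ref{lem:peiwangconnection}(ii), \ref{lem:Hurwitzrewrite}). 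Without this, your ``shadow cancellation'' is circular: you are assuming the mock modularity of $\sum H_{1,N}(n)q^n$ with a prescribed shadow in order to prove a statement equivalent to it.

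Two further points. First, your picture of two separate completions, one per generating function, is too clean: what the $s=\frac{3}{4}$ specialization of $\Fc_{\frac{1}{2},4N}^+$ actually produces is a single object $\xi_{\frac{1}{2}}\Gc$ whose holomorphic part mixes $H_{1,1}$, $H_{1,N}$, \emph{and} all the genuinely modular $H_{\ell,N}$ for $1<\ell\mid N$ (of which there are several when $N$ is composite); Pei--Wang's modularity of $\Hs_{\ell,N}$ is needed to peel those off before any cancellation statement about the remaining two pieces makes sense. Second, your suggestion that the constants should ``emerge as Euler factors'' from $U_p$/$V_p$ and Atkin--Lehner operators relating level $4$ to level $4N$ is not how the paper proceeds and is not obviously workable: $H_{1,N}(n)$ is not visibly a linear combination of $H(n/d^2)$, $H(nd^2)$ for $d\mid N^\infty$, and making such a relation precise would be a nontrivial computation of its own. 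If you want to complete your argument you should replace that speculation with the Kloosterman-zeta computation you allude to in your final sentence.
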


At the end of this introduction, we offer two examples which relate the coefficients of $\Hs_{N,N}$ to certain theta functions for $N=5$ or $N=7$ along the lines of \eqref{eq:threesquares}.

\begin{rmks}
\
\begin{enumerate}
\item The modular forms from Theorem \ref{thm:mainholomorphic} are contained in different spaces than those considered in \cite{beckwith2017}, where the level was square. 
\item The character for our function is trivial and hence these modular forms differ from those in \cite{brika1}, where the character is always nontrivial. 
\item Jagathesan and Manickam \cite{jagman} constructed Cohen--Eisenstein series of level $4N$ for any odd $N \in \N$.
\item Theorem \ref{thm:mainholomorphic} is complementary to \cite{luozhou}*{Theorem 7.2}, where the modular forms $\Hs_{\ell,N}$ for $1 < \ell \mid N$ were used to construct another basis for the Eisenstein plus subspace of weight $\frac{3}{2}$ on $\Gamma_0(4N)$.
\end{enumerate}
\end{rmks}

\subsection{Hurwitz class numbers - non-holomorphic setting} \label{subsec:intrononhol}

In $1975$, Zagier \cites{zagier75, zagier76} discovered the weight $\frac{3}{2}$ non-holomorphic modular completion of $\Hs_{1,1}$ on $\Gamma_0(4)$, which is explicitly given by
\begin{align} \label{eq:Hcdef}
\Hc(\tau) \coloneqq -\frac{1}{12} + \sum_{n \geq 1} H_{1,1}(n) q^n + \frac{1}{8\pi\sqrt{v}} + \frac{1}{4\sqrt{\pi}} \sum_{n \geq 1} n \Gamma\left(-\frac{1}{2},4\pi n^2 v\right) q^{-n^2}.
\end{align}
Here and throughout, $\Gamma(s,y)$ denotes the incomplete $\Gamma$-function, see \eqref{eq:incompleteGammaDef} below.

In the early $2000$'s, Bruinier and Funke \cite{brufu02} introduced the notion of \emph{harmonic Maass forms}, which are integral or half integral weight non-holomorphic modular forms whose Fourier expansion splits in a similar way as $\Hc$. Around the same time, Zwegers \cite{zwegers} demonstrated that such forms provided the analytical framework for Ramanujan's elusive mock theta functions. In turn, this observation coined the notion of a \emph{mock modular form}, which is the holomorphic part of a non-trivial harmonic Maass form. Our methods rely on the theory of mock modular forms, particularly, their construction from Maass--Eisenstein series and the study of their behavior under the $\xi_{k}$-operator
\begin{align*}
\xi_k \coloneqq 2i v^k \overline{\frac{\partial}{\partial\overline{\tau}}} = i v^k \overline{ \left( \frac{\partial}{\partial u} + i \frac{ \partial}{\partial v} \right)}
\end{align*}
of Bruinier and Funke. The $\xi_k$-operator plays a fundamental role in the theory of harmonic Maass forms, as it defines a surjective map from spaces of harmonic Maass forms to spaces of weakly holomorpic modular forms. The problem of constructing preimages under $\xi_k$ of a given (weakly) holomorphic modular form is generally difficult and only understood in a few cases explicitly, such as Poincar{\'e} and Eisenstein series \cites{wag, bringmann-ono07}, $\theta^3$ from \eqref{eq:threesquares} \cite{rhowal}, and weight $2$ newforms \cites{agor15}.

In today's terminology, $\Hc$ is a weight $\frac{3}{2}$ harmonic Maass form on $\Gamma_0(4)$. We find the following higher level analog of Zagier's result.
\begin{thm} \label{thm:mainnonholomorphic}
Let $N > 1$ be odd and square-free. Then the function
\begin{align*}
\sum_{n \geq 1} H_{1,N}(n)q^n + \Big(\prod_{\substack{p \text{ prime} \\ p \mid N}} \frac{p}{p+1}\Big) \Bigg(\frac{1}{8\pi\sqrt{v}} + \frac{1}{4\sqrt{\pi}} \sum_{n \geq 1} n \Gamma\left(-\frac{1}{2},4\pi n^2 v\right) q^{-n^2}\Bigg)
\end{align*}
is a weight $\frac{3}{2}$ harmonic Maass form on $\Gamma_0(4N)$. Its image under $\xi_{\frac{3}{2}}$ equals
\begin{align*}
-\frac{1}{16\pi}\Big(\prod_{\substack{p \text{ prime} \\ p \mid N}} \frac{p}{p+1}\Big) \theta(\tau).
\end{align*}
\end{thm}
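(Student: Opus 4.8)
The plan is to recognize the function in the statement --- call it $F(\tau)$ --- as an explicit linear combination of Zagier's form $\Hc$ and the holomorphic modular form produced by Theorem \ref{thm:mainholomorphic}. Writing $C_N \coloneqq \prod_{p\mid N}\tfrac{1}{p+1}$ and letting $G(\tau)$ denote the weight $\tfrac32$ modular form
\[
G(\tau) = C_N \sum_{n\geq 0} H_{1,1}(n)q^n - \frac1N\sum_{n\geq 1}H_{1,N}(n)q^n
\]
on $\Gamma_0(4N)$ from Theorem \ref{thm:mainholomorphic}, I would first observe that the holomorphic part of $\Hc$ is exactly $\sum_{n\geq 0}H_{1,1}(n)q^n$ (with $H_{1,1}(0)=-\tfrac1{12}$), while its non-holomorphic part is precisely the bracketed expression in the statement divided by $C_N$. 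A direct comparison of Fourier expansions then yields the key identity
\[
F = C_N\, \Hc - G.
\]

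From here the first assertion is essentially formal. Since $\Gamma_0(4N)\subseteq\Gamma_0(4)$, Zagier's harmonic Maass form $\Hc$ on $\Gamma_0(4)$ restricts to a weight $\tfrac32$ harmonic Maass form on $\Gamma_0(4N)$; in particular its growth at every cusp of $\Gamma_0(4N)$ is inherited from its good behaviour on $\Gamma_0(4)$. The form $G$ is holomorphic modular of weight $\tfrac32$ on $\Gamma_0(4N)$, hence trivially a harmonic Maass form. As the space of weight $\tfrac32$ harmonic Maass forms on $\Gamma_0(4N)$ is a vector space, $F = C_N\Hc - G$ lies in it, proving the first claim.

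For the image under $\xi_{\frac32}$ I would use that $\xi_{\frac32}$ is linear and annihilates holomorphic modular forms, so $\xi_{\frac32}(G)=0$ and therefore $\xi_{\frac32}(F) = C_N\,\xi_{\frac32}(\Hc)$. It then remains to compute the shadow of Zagier's form by applying $\xi_{\frac32}=2iv^{3/2}\overline{\partial_{\bar\tau}}$ term by term to the non-holomorphic part of $\Hc$: the term $\tfrac{1}{8\pi\sqrt v}$ contributes the constant $-\tfrac1{16\pi}$, while differentiating the incomplete $\Gamma$-terms via $\partial_y\Gamma(-\tfrac12,y)=-y^{-3/2}e^{-y}$ together with $\partial_{\bar\tau}(4\pi n^2 v)=2\pi i n^2$ collapses each summand to $-\tfrac{1}{8\pi}q^{n^2}$ (the factor $n$ cancels and $e^{-4\pi n^2 v}q^{-n^2}=\overline{q^{n^2}}$ combines cleanly). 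Summing yields $\xi_{\frac32}(\Hc)=-\tfrac1{16\pi}\bigl(1+2\sum_{n\geq 1}q^{n^2}\bigr)=-\tfrac1{16\pi}\Theta$, whence $\xi_{\frac32}(F)=-\tfrac{C_N}{16\pi}\Theta$, as claimed.

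The only genuinely non-routine step is spotting the decomposition $F=C_N\Hc-G$; after that, both assertions reduce to the linearity of $\xi_{\frac32}$, the inclusion $\Gamma_0(4N)\subseteq\Gamma_0(4)$, and the classical shadow computation for $\Hc$, none of which presents real difficulty. One small point to verify is that the harmonicity and moderate-growth conditions persist at the possibly larger set of cusps of $\Gamma_0(4N)$, but this is automatic for a subgroup.
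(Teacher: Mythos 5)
Your proposal is correct and follows essentially the same route as the paper, which proves the first assertion by ``adding $2\pi\rf(N)\Hc(\tau)$ to the result of Theorem \ref{thm:mainholomorphic}'' (note $2\pi\rf(N)=\prod_{p\mid N}\frac{1}{p+1}$, so this is exactly your decomposition $F=C_N\Hc-G$) and deduces the shadow from the linearity of $\xi_{\frac{3}{2}}$ together with the known identity $\xi_{\frac{3}{2}}\Hc=-\frac{1}{16\pi}\Theta$. The only cosmetic difference is that you recompute the shadow of $\Hc$ term by term (correctly), whereas the paper cites it.
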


\begin{rmk}
The modularity of the form in Theorem \ref{thm:mainholomorphic} implies the modularity of the form in Theorem \ref{thm:mainnonholomorphic} and vice versa.
\end{rmk}

Since harmonic Maass forms might be viewed as non-holomorphic preimages of modular forms under $\xi_{k}$, it is natural to ask for preimages of harmonic Maass forms under $\xi_k$, which often are called \emph{sesquiharmonic Maass forms}. A first example of a sesquiharmonic Maass form appeared in a paper by Duke, Imamo\={g}lu and T\'{o}th \cite{dit11annals}, who constructed such a preimage of $\Hc$. More recently, they also appeared in Maass--Poincar{\'e} series whose coefficients involve non-critical modular $L$-values \cite{brdira} and Shintani lifts of weight $0$ harmonic Maass forms \cite{alneschw21}. Iterating the search for preimages under $\xi_k$ leads to so-called \emph{polyharmonic Maass forms} studied in \cites{brika20, lagrho, anlagrho} for integral weights and in \cites{mat19, mat20, alansa} for half integral weights. 

Returning to Duke, Imamo\={g}lu and T\'{o}th's preimage of $\Hc$, we note that it is in fact the ``Eisenstein case'' of their more general construction of weight $\frac{1}{2}$ mock modular forms with Fourier coefficients given by traces of cycle integrals of the modular $j$-function. Their work inspired Rhoades' and Waldherr's \cite{rhowal} construction of a harmonic preimage of $\theta^3$ under $\xi_k$ as well as Wagner's \cite{wag} harmonic preimages of the classical Cohen--Eisenstein series under $\xi_k$. Both papers were generalized in \cite{bss}.

The body of this paper is devoted to the inspection of the higher level ``Eisenstein case'' of Duke, Imamo\={g}lu and T\'{o}th's sesquiharmonic forms. To this end, we define ($\N = \{1,2,3,\ldots\}$)
\begin{align}
\Fc_{k,4N}(\tau,s) &\coloneqq \sum_{\gamma \in \Gamma_0(4N)_{\infty} \backslash \Gamma_0(4N)} v^{s-\frac{k}{2}} \Big\vert_k\gamma, \qquad k \in -\N+\frac{3}{2}, \qquad \re(s) > 1-\frac{k}{2}, \label{eq:Fdef} \\
\Fc_{k,4N}^+(\tau,s) &\coloneqq \mathrm{pr}^+\Fc_{k,4N}(\tau,s),
\end{align}
where $\cdot\vert_k\cdot$ denotes the Petersson slash operator, $\mathrm{pr}^+$ denotes the projection operator into Kohnen's plus space, and $\Gamma_0(4N)_{\infty}$ is the stabilizer of the cusp $i\infty$ (see Section \ref{sec:prelim} for explicit definitions). In Subsection \ref{subsec:Klostermanzetaresidues}, we show that the square-indexed Fourier coefficients of $\Fc_{k,4N}^+$ have poles at $s = \frac{3}{4}$ and compute their residues. By examining all Fourier coefficients of $\Fc_{k,4N}^+$, we establish that 
\begin{align}
\Gc(\tau) \coloneqq \frac{\partial}{\partial s} \left[ \left(s-\frac{3}{4}\right)\Fc_{\frac{1}{2},4N}^+(\tau,s)\right] \Big\vert_{s=\frac{3}{4}} \label{eq:Gdef}
\end{align}
exists in Subsection \ref{subsec:analyticcontinuation}. Theorem \ref{thm:mainholomorphic} as well as Theorem \ref{thm:mainnonholomorphic} follow from the following result, where we refer to \eqref{eq:alphadef} for the definition of Duke, Imamo\={g}lu and T\'{o}th's special function $\alpha$ (slightly renormalized here).
\begin{thm} \label{thm:maingeneral}
Let $N \in \N$ be odd and square-free. 
\begin{enumerate}[label={\rm (\roman*)}]
\item The function $\Gc$ is a weight $\frac{1}{2}$ sesquiharmonic Maass form on $\Gamma_0(4N)$. 
\item Let $\cf(n)$ be given by \eqref{eq:KloostermanZetaResidueRegular}, \eqref{eq:KloostermanZetaResidue0}, \eqref{eq:KloostermanZetaResidueSquare} and $\gamma$ be the Euler--Mascheroni constant. Then, the Fourier expansion of the function $\Gc$ is given by
\begin{multline*}
\hspace*{\leftmargini} \Gc(\tau) = \frac{2}{3} v^{\frac{1}{2}} + \frac{1}{\pi} \Big(\prod_{\substack{p \text{ prime} \\ p \mid N}} \frac{1}{p+1} \Big) \Bigg(\sum_{m \geq 1 }  \left(\gamma + \log\left(\pi m^2\right) +  \alpha\left(4m^2v\right) \right) q^{m^2} - \frac{\log(16v)}{2} \Bigg) \\
+ \frac{2}{3}(1-i) \pi \sum_{\substack{n \geq 0 \\ n \equiv 0,1 \pmod*{4}}} \cf(n)q^n + \frac{2}{3} (1-i) \pi^{\frac{1}{2}} \sum_{\substack{n < 0 \\ n \equiv 0,1 \pmod*{4}}}  \cf(n) \Gamma\left(\frac{1}{2},4\pi\vt{n}v\right) q^n.
\end{multline*}
\item If $N > 1$ then
\begin{align*}
\hspace*{\leftmargini} \frac{1}{4}\xi_{\frac{1}{2}} \Gc(\tau) = \Big(\prod_{\substack{p \text{ prime} \\ p \mid N}} \frac{1}{p+1}\Big) \left(\Hs_{1,1}(\tau) - \Hc(\tau)\right) - \frac{1}{N} \sum_{\ell \mid N} \ell \Big(\prod_{\substack{p \text{ prime} \\ p \mid \ell}} \frac{1}{1-p}\Big) \Hs_{\ell,N}(\tau),
\end{align*}
and this is a weight $\frac{3}{2}$ harmonic Maass form on $\Gamma_0(4N)$. Moreover, we have
\begin{align*}
\Delta_{\frac{1}{2}} \Gc(\tau) = -\frac{1}{4 \pi} \Big(\prod_{\substack{p \text{ prime} \\ p \mid N}} \frac{1}{p+1}\Big) \theta(\tau),
\end{align*}
where $\Delta_{\frac{1}{2}}$ is the weight $\frac{1}{2}$ hyperbolic Laplace operator defined in \eqref{eq:Deltasplitting}.
\end{enumerate}
\end{thm}

\begin{rmks}
\
\begin{enumerate}
\item An alternative approach to constructing modular forms of higher levels as in our theorems is given by the method of theta lifts (see for example \cites{brufu02, brufu06, funke, brufuim}). We explore this approach, and its relationship with the Maass--Eisenstein construction given in this paper, in part II \cite{bemo2}. To be more specific, we connect $\Gc$ to the plus space projection of a regularized Siegel theta lift of the modular function $1$ whenever $N=p$ is an odd prime, which builds on earlier work by Bruinier, Funke, and Imamo\={g}lu \cite{brufuim}. Hence, all the coefficients $\cf(n)$ evaluate as (regularized) quadratic traces of $1$ for such $N$. In particular, the coefficients $\cf(n)$ contain arithmetic information in this case.
\item The higher weight analog of Theorem \ref{thm:maingeneral} as well as its connection to theta lifts whenever $N=p$ is an odd prime can be found in \cite{mo25}.
\item The phenomenon that coefficients of weight $\frac{1}{2}$ mock modular forms are linked to real quadratic fields was explored in further work by Duke, Imamo\={g}lu and T\'{o}th \cite{dit11imrn}.
\end{enumerate}
\end{rmks}

\subsection{Examples}

We conclude by offering two examples of Theorem \ref{thm:mainholomorphic}. The coefficients can be computed using the SAGE \cite{sage} code available as an ancillary file to our arXiv submission.

\begin{exa} \label{exa:examples}
We demonstrate the modularity result from Theorem \ref{thm:mainholomorphic} from a different perspective if the level is prime. This resembles \eqref{eq:threesquares}. Recall Sturm's bound (see \cite{stein}*{Corollary 9.20} for example). We define the ternary quadratic forms
\begin{align*}
Q_5(x,y,z) &\coloneqq 7x^2 + 3 y^2 + 7 z^2 + 2 xy - 6 xz + 2 yz, \\
Q_7(x,y,z) &\coloneqq 4x^2 + 7 y^2 + 8 z^2 - 4 xz.
\end{align*}
\begin{enumerate}
\item If $N = 5$, Sturm's bound implies that
\begin{align*}
3\Hs_{5,5}(\tau) &= -12\Big(\sum_{n \geq 0} H_{1,1}(n)q^n - \frac{6}{5} \sum_{n \geq 1} H_{1,5}(n)q^n\Big) = \sum_{(a,b,c) \in \Z^3} q^{Q_5(a,b,c)} \\
&= 1 + 2q^3 + 6q^7 + 6q^8 + 8q^{12} + 6q^{15} + O\left(q^{20}\right).
\end{align*}
The coefficients can be found in the OEIS \cite{oeis}*{A028973}.
\item If $N = 7$, Sturm's bound implies that
\begin{align*}
2\Hs_{7,7}(\tau) &= -12\Big(\sum_{n \geq 0} H_{1,1}(n)q^n - \frac{8}{7} \sum_{n \geq 1} H_{1,7}(n)q^n\Big) = \sum_{(a,b,c) \in \Z^3} q^{Q_7(a,b,c)} \\
&= 1 + 2q^4 + 2q^7 + 4q^8 + 4q^{11} + 8q^{15} + 6q^{16} + O\left(q^{20}\right).
\end{align*}
\end{enumerate}
\end{exa}

\subsection{Outline of the paper}
The paper is organized as follows. We summarize the required concepts in Section \ref{sec:prelim} and provide the general Fourier expansion of $\Fc_{k,4N}^+$. In Section \ref{sec:Kloostermanzeta}, we move to a closer inspection of the plus space Kloosterman zeta function involved in that Fourier expansion and specialize it to weight $k=\frac{1}{2}$ and $s=\frac{3}{4}$ subsequently. The purpose of this section is to provide explicit formulas and to compute the residue of the plus space Kloosterman zeta function at this spectral point. The content of Section \ref{sec:proofof1.3part1and2} is the proof of Theorem \ref{thm:maingeneral} (i) and (ii). Section \ref{sec:proofof1.3part3} is devoted to the proof of Theorem \ref{thm:maingeneral} (iii), while Section \ref{sec:proofof1.1and1.2} establishes Theorems \ref{thm:mainholomorphic} and \ref{thm:mainnonholomorphic}. We conclude the paper with some questions for future work.

\section*{Acknowledgements}
We would like to thank Scott Ahlgren, Kathrin Bringmann, Nikolaos Diamantis, Michael Griffin, Toshiki Matsusaka and Larry Rolen for helpful remarks on an earlier version of this paper. Moreover, we would like to thank the anonymous referee for their comments and suggestions.

\section*{Notation}
\subsection*{General modular forms notation}
\begin{itemize}
\item $\tau = u+iv \in \Hb$, $q = e^{2\pi i \tau}$,
\item $s$, $\sfr$ are spectral parameters with $\re(s) > 1-\frac{k}{2}$ and related by $\sfr = 2s-\frac{1}{2}$,
\item $N \in \N$ is an odd and square-free level, $\Nc \in \N$ is a general level with $4 \mid \Nc$,
\item $\ell$ is a positive divisor of $N$,
\item $\gamma$ is either the Euler--Mascheroni constant or a $2\times2$ matrix,
\item $\Gamma(s,z)$ is the (analytic continuation of the) principal branch of the incomplete $\Gamma$-function (as a function of $s$), $\Gamma(s) = \Gamma(s,0)$ is the $\Gamma$-function,
\item $\Gamma_0(\Nc)$ is the Hecke congruence subgroup of $\slz$,
\item $\Gamma_0(\Nc)_{\infty} = \left\{\pm \left(\begin{smallmatrix} 1 & n \\ 0 & 1 \end{smallmatrix}\right) \colon n \in \Z \right\} \leq \Gamma_0(\Nc)$ is the stabilizer of the cusp $i\infty$,
\item $\nu_p(n) \in \N_0 \cup \{\infty\}$ is the $p$-adic valuation of $n$,
\item $\left(\frac{a}{b}\right)$ is the Kronecker symbol,
\item $\chi_{d}(n) = \left(\frac{d}{n}\right)$ is a Dirichlet character,
\item $\id = \chi_{1}$ is the principal character of modulus $1$,
\item $\varepsilon_d$ equals $1$ if $d \equiv 1 \pmod*{4}$ and equals $i$ if $d \equiv 3 \pmod*{4}$,
\item $L(s,\chi)$ is the Dirichlet $L$-function and $\zeta(s)$ is the Riemann zeta function,
\item $L_N(s,\chi)$ the incomplete Dirichlet $L$-function defined in \eqref{eq:L_Ndef},
\item $\mu(n)$ the M{\"o}bius function,
\item $\sigma_{N,s}(n)$ and $\sigma_{\ell,N,s}(n)$, $N \neq \ell \mid N$, are certain modifications of the standard sum of powers of divisors function (see Section \ref{sec:prelim}),
\item $M_{k}(\Nc)$ is the $\C$-vector space of holomorphic modular forms of weight $k$ on $\Gamma_0(\Nc)$,
\item $M_{k}^+(\Nc) \subseteq M_{k}(\Nc)$ is Kohnen's plus subspace inside $M_{k}(\Nc)$,
\item $E_{k}^+(\Nc) \subseteq M_{k}^+(\Nc)$ is the Eisenstein subspace inside $M_{k}^+(\Nc)$,
\item $M_{\mu,\nu}(y)$ and $W_{\mu,\nu}(y)$ are the usual $M$- and $W$-Whittaker functions,
\item $\af \in \Q \cup \{i\infty\}$ is a cusp,
\item $\xi_k = 2iv^k\overline{\frac{\partial}{\partial\overline{\tau}}}$,
\item $\Delta_k = -\xi_{2-k}\xi_k$ is the hyperbolic Laplace operator (see \eqref{eq:Deltasplitting}),
\item $\cdot \vert_{k}\cdot$ is the Petersson slash operator (see Section \ref{sec:prelim}),
\item $\cdot\vert\mathrm{pr}^+$ is the projection operator into Kohnen's plus space (see Section \ref{sec:prelim}).
\end{itemize}

\subsection*{Functions and class numbers}
\begin{itemize}
\item $\Fc_{k,4N}(\tau,s)$ is the weight $k \in -\N+\frac{3}{2}$ and level $4N$ Maass--Eisenstein series with spectral parameter $s \in \C$ satisfying $\re(s) > 1-\frac{k}{2}$, see \eqref{eq:Fdef}
\item $\Fc_{k,4N}^+(\tau,s)$ is the projection of $\Fc_{k,4N}(\tau,s)$ to Kohnen's plus space, see \eqref{eq:DeltaG}
\item $\Gc(\tau)$ is the constant coefficient in the Laurent expansion of $\Fc_{\frac{1}{2},4N}^+(\tau,s)$ about $s=\frac{3}{4}$, see \eqref{eq:Gdef},
\item $H_{\ell,N}(n)$, $\ell \mid N$, are the generalized Hurwitz class numbers, see \eqref{eq:HNNdef}, \eqref{eq:HellNdef}, 
\item $H(n) = H_{1,1}(n)$ are the classical Hurwitz class numbers,
\item $\Hs_{\ell,N}(\tau)$, where $\ell \mid N$, are the generating functions of $H_{\ell,N}(n)$; if $N > 1$ is odd and square-free and $\ell > 1$ then $\Hs_{\ell,N} \in M_{\frac{3}{2}}^+(4N)$,
\item $\Hc(\tau)$ is Zagier's non-holomorphic Eisenstein series, see \eqref{eq:Hcdef},
\item $\theta(\tau) = \sum_{n \in \Z} q^{n^2} \in M_\frac{1}{2}(4)$ is the classical Jacobi theta function.
\end{itemize}

\subsection*{Special notation}
\begin{itemize}
\item $K_k(m,n;c)$ is the classical Kloosterman sum with weight $k \in \Z+\frac{1}{2}$ theta multiplier system normalized by $\frac{1}{c}$ (see Section \ref{sec:prelim}),
\item $\Kc_{-\rho,k}$ is Kohnen's modified Kloosterman sum normalized by $\frac{(-1)^{k-\frac{1}{2}}i}{4}$ (see Section \ref{sec:prelim}),
\item $\Ks_{k,4N}^+(m,n;s)$ is the plus space Kloosterman zeta function defined in \eqref{eq:Kloostermanzetadefinition},
\item $\alpha(y)$ is $4\pi$ times Duke, Imamo\={g}lu and T\'{o}th's \cite{dit11annals}*{p.\ 952} special function and defined in \eqref{eq:alphadef},
\item $\rf(N) = \frac{1}{2\pi} \prod_{p \text{ prime}, \ p\mid N} \frac{1}{p+1}$ is the multiple of $\theta$ in the resiude of $\Fc_{\frac{1}{2},4N}^+$ at $s=\frac{3}{4}$, see \eqref{eq:resiudemultiple} and Proposition \ref{prop:Eisensteinresidue},
\item $\cf(n)$ are Fourier coefficients of $\Gc$, see \eqref{eq:cfdef}, \eqref{eq:KloostermanZetaResidueRegular}, \eqref{eq:KloostermanZetaResidue0} and \eqref{eq:KloostermanZetaResidueSquare},
\item $a\left(p^j,n\right)$ are certain Gauss sums pertaining to weight $\frac{1}{2}$, see Lemma \ref{lem:maasseval1},
\item $A(p,n)$ are local factors pertaining to weight $\frac{1}{2}$, see before Proposition \ref{prop:localfactors},
\item $\Ac_{r}(p,n)$ are Pei's and Wang's variant of $A(p,n)$ pertaining to weights $r + \frac{1}{2}$, $r \in \N$, see Section \ref{sec:proofof1.3part3},
\item $T_{\Nc,\sfr}^{\chi}$ is a certain divisor-type sum introduced before Lemma \ref{lem:TdivisorSum} arising from the square part of the Fourier index of $\Fc_{\frac{1}{2},4N}^+$,
\item $f_j(\tau)$ are the coefficients in the Laurent expansion of $\Fc_{\frac{1}{2},4N}^+(\tau,s)$ about $s=\frac{3}{4}$ introduced in \eqref{eq:laurentformal},
\item $g(\tau) \in M_{\frac{3}{2}}^+(4N)$ is introduced in \eqref{eq:gunknown}, see Proposition \ref{prop:Gshadow} and \eqref{eq:peiandwangresult} too.
\end{itemize}

\section{Preliminaries} \label{sec:prelim}

\subsection{Holomorphic and non-holomorphic modular forms}
Let $k \in \frac{1}{2}\Z$. We choose the principal branch of the square-root throughout. The \emph{(Petersson) slash operator} is defined as 
\begin{align*}
\left(f\vert_k\gamma\right)(\tau) \coloneqq \begin{cases}
(c\tau+d)^{-k} f(\gamma\tau) & \text{if } k \in \Z, \\
\left(\frac{c}{d}\right)\varepsilon_d^{2k}(c\tau+d)^{-k} f(\gamma\tau) & \text{if } k \in \frac{1}{2}+\Z,
\end{cases}
\quad \gamma = \left(\begin{matrix} a & b \\ c& d \end{matrix}\right) \in \begin{cases}
\slz & \text{if } k \in \Z, \\
\Gamma_0(4) & k \in \frac{1}{2}+\Z,
\end{cases}
\end{align*}
where $\left(\frac{c}{d}\right)$ denotes the Kronecker symbol, and $\varepsilon_d \coloneqq 1,i$ if $d \equiv \pm1 \pmod*{4}$ for odd integers $d$ (this is guaranteed whenever $\gamma \in \Gamma_0(4)$). The \emph{weight $k$ hyperbolic Laplace operator} is given by
\begin{align} \label{eq:Deltasplitting}
\Delta_k \coloneqq -v^2\left(\frac{\partial^2}{\partial u^2}+\frac{\partial^2}{\partial v^2}\right) + ikv\left(\frac{\partial}{\partial u} + i\frac{\partial}{\partial v}\right) = - \xi_{2-k} \xi_{k}.
\end{align}
This established, we define various classes of modular objects. 
\begin{defn}
Let $k \in \frac{1}{2}\Z$ and $\Nc \in \N$. Suppose that $4 \mid \Nc$ whenever $k \in \frac{1}{2} + \Z$. Let $f \colon \Hb \to \C$ be a smooth function.
\begin{enumerate}[label={\rm (\alph*)}]
\item We call $f$ a \emph{(holomorphic) modular form} of weight $k$ on $\Gamma_0(\Nc)$, if $f$ satisfies the following conditions:
\begin{enumerate}[label={\rm (\roman*)}]
\item For all $\gamma \in \Gamma_0(\Nc)$, we have $f|_k \gamma = f$.
\item The function $f$ is holomorphic on $\Hb$, that is $\xi_k f = 0$.
\item The function $f$ is holomorphic at every cusp of $\Gamma_0(\Nc)$.
\end{enumerate}
\item We call $f$ a \emph{harmonic Maass form of weight $k$} on $\Gamma_0(\Nc)$, if $f$ satisfies the following conditions:
\begin{enumerate}[label={\rm  (\roman*)}]
\item For all $\gamma \in \Gamma_0(\Nc)$, we have $f|_k \gamma = f$.
\item The function $f$ is an eigenfunction of $\Delta_{k}$ with eigenvalue $0$, that is $-\xi_{2-k} \xi_k f = 0$.
\item The function $f$ is of at most linear exponential growth towards all cusps of $\Gamma_0(\Nc)$.
\end{enumerate}
\item We call $f$ a \emph{sesquiharmonic Maass form of weight $k$} on $\Gamma_0(\Nc)$, if $f$ satisfies the following conditions:
\begin{enumerate}[label={\rm  (\roman*)}]
\item For all $\gamma \in \Gamma_0(\Nc)$, we have $f|_k \gamma = f$.
\item The function $f$ satisfies $\xi_{k} \xi_{2-k} \xi_{k} f = 0$.
\item The function $f$ is of at most linear exponential growth towards all cusps of $\Gamma_0(\Nc)$.
\end{enumerate}
\end{enumerate}
Forms in \emph{Kohnen's plus space} are half integral weight forms, whose Fourier expansions are supported on indices $n$ satisfying $(-1)^{k-\frac{1}{2}} n \equiv 0$, $1 \pmod*{4}$.
\end{defn}

The incomplete Gamma function is defined by
\begin{align} \label{eq:incompleteGammaDef}
\Gamma(s,z) \coloneqq \int_z^{\infty} t^{s-1}\mathrm{e}^{-t} \dm t, \qquad \re(s) > 0, \qquad z \in \C.
\end{align}
It is multi-valued as a function of $s$. If $z \neq 0$, each branch can be analytically continued in $s$ with removable singularities at non-positive integers. We restrict to principal values throughout by imposing that the path of integration has to exclude $\R_{\leq 0}$.  We refer to \cite{brdieh}*{Section 2.2} for more details. We have the asymptotic behavior
\begin{align*}
\Gamma(s,v) \sim v^{s-1}\mathrm{e}^{-v}, \quad \vert v \vert \to \infty 
\end{align*}
for $v \in \R$, see \cite{table}*{item 8.3357} or \cite{nist}*{\S 8.11}. 

Equation \eqref{eq:Deltasplitting} implies that a weight $k \neq 1$ harmonic Maass form $f$ naturally splits into a holomorphic and a non-holomorphic part. Explicitly, we have
\begin{align*}
f(\tau) = \sum_{n \gg -\infty} c_f^+(n) q^n + c_f^-(0)v^{1-k} + \sum_{\substack{n \ll \infty \\ n \neq 0}} c_f^-(n)\Gamma(1-k,-4\pi nv)q^n,
\end{align*}
where the notation $\sum_{n \gg -\infty}$ abbreviates $\sum_{n \geq m_f}$ for some $m_f \in \Z$. The notation $\sum_{n \ll \infty}$ is defined analogously and similar expansions hold at the other cusps. A \emph{mock modular form} of weight $k$ is the holomorphic part of a harmonic Maass form whose non-holomorphic part is not identically zero.

Sesquiharmonic Maass forms possess a third part in their Fourier expansion, see \cite{brika20}*{Lemma 4.1}. In weight $\frac{1}{2}$, this part is given in terms of the special function
\begin{align} \label{eq:alphadef}
\alpha(y) \coloneqq \sqrt{y} \int_{0}^{\infty} \frac{\log(t+1)}{\sqrt{t}} e^{-\pi y t} \dm t, \qquad y > 0,
\end{align}
which is a renormalization of \cite{dit11annals}*{p.\ $952$}. Moreover, we require the $W$-Whittaker function (see \cite{nist}*{\S 13.14} for instance). The $W$-Whittaker function satisfies
\begin{align} \label{eq:whittakerspectral}
\begin{split}
W_{\frac{k}{2},\frac{1}{2}-\frac{k}{2}}(4\pi n v) &= (4\pi n v)^{\frac{k}{2}}e^{-2\pi n v}, \\
W_{-\frac{k}{2},\frac{1}{2}-\frac{k}{2}}(4\pi \vt{n} v) &= (4\pi \vt{n} v)^{\frac{k}{2}}e^{2\pi \vt{n} v}\Gamma(1-k,4\pi\vt{n}v)
\end{split}
\end{align}
according to NIST \cite{nist}*{items 13.18.2, 13.18.5}.
\begin{lemma} \label{lem:preimageofincompletegamma}
Let $n \in \N$. We have
\begin{align*}
\frac{1}{(4\pi n v)^{\frac{1}{4}}} \frac{\partial}{\partial s} W_{\frac{1}{4},s-\frac{1}{2}}(4\pi n v)\Big\vert_{s=\frac{3}{4}} &= e^{-2\pi n v} \alpha(4nv),
\\
\xi_{\frac{1}{2}} \left(\alpha(4nv)q^n\right) &= -\sqrt{\pi n} \Gamma\left(-\frac{1}{2},4\pi nv \right) q^{-n}.
\end{align*}
\end{lemma}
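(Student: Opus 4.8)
The plan is to prove both identities by direct computation, each reducing a one-parameter integral to the relevant special function; the two computations are logically independent but both funnel through the single integral defining $\alpha$ in \eqref{eq:alphadef}.

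For the first identity I would begin from the integral representation of the $W$-Whittaker function obtained by combining \cite{nist}*{13.14.3, 13.4.4},
\[ W_{\kappa,\mu}(z) = \frac{z^{\mu+\frac12}e^{-z/2}}{\Gamma(\mu-\kappa+\frac12)}\int_0^\infty t^{\mu-\kappa-\frac12}(1+t)^{\mu+\kappa-\frac12}e^{-zt}\,\dm t, \qquad \re\left(\mu-\kappa+\tfrac12\right)>0. \]
Specializing $\kappa=\frac14$, $\mu=s-\frac12$ and $z=4\pi nv$ writes $W_{\frac14,s-\frac12}(z)$ as $e^{-z/2}$ times the product $F(s)I(s)$, where $F(s)=z^{s}/\Gamma(s-\frac14)$ and $I(s)=\int_0^\infty t^{s-\frac54}(1+t)^{s-\frac34}e^{-zt}\,\dm t$. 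Differentiating in $s$ and evaluating at $s=\frac34$ (where $\Gamma(\frac12)=\sqrt\pi$ and the $(1+t)$-exponent vanishes), the factor $F$ contributes $F(\frac34)(\log z-\psi(\frac12))$, while $\partial_s I$ contributes two logarithmic integrals, one from $\partial_s t^{s-5/4}$ and one from $\partial_s(1+t)^{s-3/4}$. Using $\int_0^\infty t^{-1/2}\log t\, e^{-zt}\,\dm t=\sqrt\pi\,z^{-1/2}(\psi(\tfrac12)-\log z)$, the $\log z$ and digamma terms cancel identically, and what survives is $\frac{z^{3/4}}{\sqrt\pi}\int_0^\infty t^{-1/2}\log(1+t)e^{-zt}\,\dm t$. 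After restoring $z=4\pi nv$ and simplifying the powers of $z$, this equals $e^{2\pi nv}(4\pi nv)^{1/4}\alpha(4nv)$ by \eqref{eq:alphadef}; multiplying back by $e^{-z/2}$ and then by $(4\pi nv)^{-1/4}$ yields the first identity.

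For the second identity I would apply $\xi_{\frac12}=iv^{1/2}\overline{(\partial_u+i\partial_v)}$ directly to $\alpha(4nv)q^n=\alpha(4nv)e^{2\pi inu-2\pi nv}$. Since $\alpha$ is real, the term from $\partial_u$ and the $-2\pi n\alpha$ piece of $\partial_v$ are both proportional to $\alpha$ and cancel, leaving $\xi_{\frac12}(\alpha(4nv)q^n)=4nv^{1/2}\alpha'(4nv)e^{-2\pi inu-2\pi nv}$, so everything reduces to evaluating $\alpha'$. Rescaling $t\mapsto s/(\pi y)$ in \eqref{eq:alphadef} gives $\alpha(y)=\frac{1}{\sqrt\pi}\int_0^\infty s^{-1/2}\log(1+\frac{s}{\pi y})e^{-s}\,\dm s$, and differentiating under the integral (justified by dominated convergence) yields $\alpha'(y)=-\frac{1}{\sqrt\pi\,y}\int_0^\infty\frac{\sqrt s}{\pi y+s}e^{-s}\,\dm s$. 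Writing $\frac{1}{\pi y+s}=\int_0^\infty e^{-(\pi y+s)r}\,\dm r$, interchanging the order of integration, evaluating the resulting $\Gamma(\frac32)$-integral in $s$, and substituting $w=1+r$ followed by $x=\pi y w$ identifies the remaining integral as $\Gamma(-\frac12,\pi y)$ in the sense of \eqref{eq:incompleteGammaDef}. This gives the closed form $\alpha'(y)=-\frac{\sqrt\pi}{2}y^{-1/2}\Gamma(-\frac12,\pi y)e^{\pi y}$. Substituting $y=4nv$ back, the factor $e^{4\pi nv}$ from $\alpha'$ combines with $e^{-2\pi nv}$ to produce $e^{2\pi nv}$, so $e^{-2\pi inu+2\pi nv}=q^{-n}$ and the constants collapse to $-\sqrt{\pi n}\,\Gamma(-\frac12,4\pi nv)q^{-n}$, as claimed.

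I expect the main obstacle to be bookkeeping rather than any deep input. In the first part one must verify that the $\log z$ and $\psi(\frac12)$ contributions cancel exactly, so that only the $\log(1+t)$ integral — that is, $\alpha$ — remains; a sign slip there would destroy the identity. In the second part the decisive step is the evaluation of $\alpha'$ as an incomplete $\Gamma$-value, for which the Schwinger-parameter trick $\frac{1}{\pi y+s}=\int_0^\infty e^{-(\pi y+s)r}\,\dm r$ is the key device. I would deliberately avoid deducing the second identity from the first via the known $\xi$-action on Whittaker building blocks, because the antilinearity of $\xi_{\frac12}$ interacts awkwardly with $s$-differentiation at the real point $s=\frac34$; the direct route above is cleaner and self-contained.
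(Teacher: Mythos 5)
Your proposal is correct in substance, but it takes a different route from the paper, and in fact does more work: the paper's proof only treats the second identity explicitly (the first is inherited from the Bringmann--Diamantis--Raum setup), whereas you derive both from scratch. For the second identity the paper first rewrites $\alpha$ via the auxiliary function $\Gamma_{-\frac{1}{2}}(y)=\int_y^\infty t^{-\frac{1}{2}}\Gamma(-\frac{1}{2},t)e^t\,\dm t$, proving $\Gamma_{-\frac{1}{2}}(4\pi ny)=2\alpha(4ny)$ by unfolding the integral representation of $\Gamma(-\frac12,t)$, so that $\xi_{\frac{1}{2}}$ then acts through the fundamental theorem of calculus and the incomplete Gamma factor appears for free. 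You instead differentiate $\alpha$ under the integral sign and evaluate $\alpha'(y)=-\frac{\sqrt{\pi}}{2}y^{-\frac12}e^{\pi y}\Gamma(-\frac12,\pi y)$ via the Schwinger-parameter identity $\frac{1}{\pi y+s}=\int_0^\infty e^{-(\pi y+s)r}\dm r$; I checked this evaluation and the subsequent assembly of $\xi_{\frac12}(\alpha(4nv)q^n)$, and both are correct. The two arguments are ultimately the same double-integral interchange packaged differently, but yours is self-contained where the paper leans on \cite{brdira}, and your Whittaker computation for the first identity (with the clean cancellation of the $\log z$ and $\psi(\frac12)$ terms) is a genuine addition. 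One bookkeeping slip to fix in the first part: the surviving term $\frac{z^{3/4}}{\sqrt{\pi}}\int_0^\infty t^{-1/2}\log(1+t)e^{-zt}\,\dm t$ equals $(4\pi nv)^{1/4}\alpha(4nv)$ with \emph{no} factor $e^{2\pi nv}$ (by \eqref{eq:alphadef} with $y=4nv$, $\pi y=z$); as written, your intermediate expression carries a spurious $e^{2\pi nv}$ which, if propagated, would cancel the $e^{-z/2}$ and miss the required $e^{-2\pi nv}$ in the final identity.
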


\begin{proof}
The first claim is proven in \cite{alansa}*{Lemma 9}. Following \cite{brdira}*{(4.10)}, define the function 
\begin{align*}
\Gamma_{- \frac{1}{2}} (y) \coloneqq \int_y^{\infty} t^{- \frac{1}{2}} \Gamma\left( - \frac{1}{2}, t\right) e^t  \dm t.
\end{align*}
Utilizing \cite{table}*{(8.853)}, we infer that
\begin{multline*}
\Gamma_{-\frac{1}{2}} (4 \pi n y) 
=  \frac{1}{\Gamma(\frac{3}{2})} \int_0^{\infty}  e^{-u} u^{\frac{1}{2}} \left(\int_{4 \pi n y}^{\infty} t^{-1} (t+u)^{-1} \dm t \right) \dm u \\
=  \frac{1}{\Gamma(\frac{3}{2})} \int_0^{\infty}  e^{-u} u^{\frac{1}{2}} \left(\int_{4 \pi n y}^{\infty} \frac{1}{u} \left(\frac{1}{t}-\frac{1}{t+u}\right) \dm t \right) \dm u \\
=  \frac{1}{\Gamma(\frac{3}{2})} \int_0^{\infty}  e^{-u} u^{-\frac{1}{2}} \log \left(1 + \frac{u}{4 \pi n y} \right) \dm u
= 2 \alpha (4 n y).
\end{multline*}
Now, we can directly compute
\begin{multline*}
\xi_{\frac{1}{2}} \left( q^n \alpha (4 ny) \right) = \overline{q^n} \xi_{\frac{1}{2}} \left( \frac{1}{2} \Gamma_{-\frac{1}{2}} (4 \pi n y)  \right) 
= \overline{q^n} \xi_{\frac{1}{2}} \left( \frac{1}{2} \int_{4 ny} ^{\infty} \Gamma \left( - \frac{1}{2}, t \right) e^t t^{- \frac{1}{2}} \dm t  \right)  \\
= i y^{\frac{1}{2}} \frac{1}{2} \overline{(-iq^n)} (4 \pi n y)^{- \frac{1}{2}} e^{4 \pi n y} \Gamma \left( - \frac{1}{2}, 4 \pi n y \right) (4 \pi n)
=  - q^{-n } (\pi n )^{ \frac{1}{2}}   \Gamma \left( - \frac{1}{2}, 4 \pi n y \right),
\end{multline*}
which completes the proof.
\end{proof}

More details on modular forms, harmonic and sesquiharmonic Maass forms can be found in \cite{thebook} for example. 

\subsection{Work of Kohnen}

Following Kohnen \cite{koh85}*{p.\ 254, 256}, we define the Kloosterman sum
\begin{align*}
K_{k}(m,n;c) \coloneqq \frac{1}{c}\sum_{\substack{r \pmod*{c} \\ \gcd(c,r)=1}} \left(\frac{c}{r}\right)\varepsilon_r^{2k} e^{2\pi i \frac{mr^*+nr}{c}}, \quad k \in \frac{1}{2}+\Z, \quad rr^* \equiv 1 \pmod*{c}, \quad 4 \mid c,
\end{align*}
as well as the modified Kloosterman sum ($rr^* \equiv 1 \pmod*{Nc}$)
\begin{multline*}
\Kc_{-\rho,k}(m,n;Nc) \coloneqq 
\frac{(-1)^{k-\frac{1}{2}}i}{4} \left(\frac{4}{-Nc}\right)\varepsilon_{Nc}^{-2k} \sum_{\substack{r \pmod*{Nc} \\ \gcd(Nc,r) = 1}} \left(\frac{r}{Nc}\right) e^{2\pi i \frac{mr^*+nr}{Nc}}, \qquad k \in \frac{1}{2}+\Z, 
\end{multline*}
where the latter is associated to the cusp $\frac{-\rho}{N}$ with $\rho \coloneqq \left(\frac{-4}{N}\right) \in \{\pm 1\}$. In Kohnen's notation, we have
\begin{align*}
K_{k}(m,n;4Nc) = \mathscr{H}_{4Nc}(n,m), \qquad \Kc_{-\rho,k}(m,n;Nc) = \frac{(-1)^{k-\frac{1}{2}}i}{4}\mathscr{H}'_{Nc}(n,m).
\end{align*}
Furthermore, note that it suffices to inspect $K_{\frac{1}{2}}$ and $K_{\frac{3}{2}}$ by definition of $\varepsilon_r$. In turn, both Kloosterman sums are related by
\begin{align} \label{eq:Kloostermanfunctionalequation}
K_{\frac{3}{2}}(m,n;c) = -iK_{\frac{1}{2}}(-m,-n;c).
\end{align}
In addition, Kohnen proved the following identities.
\begin{lemma}[\protect{\cite{koh85}*{p.\ 257}}] \label{lem:kohnenkloostermanidentity}
Suppose that $c\equiv 1 \pmod*{2}$. Then, we have
\begin{align*}
K_{k}(m,n;4Nc) = \begin{cases}
\left(1-(-1)^{k-\frac{1}{2}}i\right)\Kc_{-\rho,k}\left(4^*m,\frac{n}{4};Nc\right) & \text{if } n \equiv 0 \pmod*{4}, \\
\frac{1}{\sqrt{2}}\left(\frac{(-1)^{k-\frac{1}{2}}n}{2}\right)K_{k}(4m,n;8Nc) & \text{if } (-1)^{k-\frac{1}{2}}n \equiv 1 \pmod*{4}.
\end{cases}
\end{align*}
\end{lemma}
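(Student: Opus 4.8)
The plan is to expand the definition of $K_{k}(m,n;4Nc)$ and decompose the summation variable $r$ modulo $4Nc$ via the Chinese Remainder Theorem. Since $N$ is odd and $c \equiv 1 \pmod{2}$, the moduli $4$ and $Nc$ are coprime, so each invertible class $r \pmod{4Nc}$ corresponds uniquely to a pair $(r_1,r_2)$ with $r_1 \in \{1,3\} \pmod{4}$ and $\gcd(r_2,Nc)=1$. Under this identification the inverse $r^*$ and the additive character $e^{2\pi i (mr^*+nr)/(4Nc)}$ both factor into a $2$-part and an $Nc$-part. Meanwhile the multiplier $\left(\frac{4Nc}{r}\right)\varepsilon_r^{2k}$ splits by multiplicativity of the Kronecker symbol (using $\left(\frac{4}{r}\right)=1$ for odd $r$, so that $\left(\frac{4Nc}{r}\right)=\left(\frac{Nc}{r}\right)$) together with quadratic reciprocity, which converts $\left(\frac{Nc}{r}\right)$ into $\left(\frac{r}{Nc}\right)$ up to a cross term linking $r_1 \pmod 4$ and $Nc \pmod 4$. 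The goal is to isolate the $r_2$-sum as a rescaling of either $\Kc_{-\rho,k}$ or of a Kloosterman sum of larger modulus, with the short $r_1$-sum over $\{1,3\} \pmod{4}$ contributing the explicit scalar prefactor.

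First I would carry out this bookkeeping, collecting all $2$-adic contributions into the $r_1$-sum. That sum carries the factor $\varepsilon_{r_1}^{2k}$, the reciprocity cross term, and the $2$-part of the phase, so it depends only on $m$ and $n$ modulo $4$. Observing that this dependence is exactly what forces the two cases is the structural heart of the argument: whether the $n$-dependent additive character is trivial on the $2$-part decides which branch holds. By \eqref{eq:Kloostermanfunctionalequation} together with the fact that $K_k$ depends only on $k \pmod{2}$, it suffices to treat one weight, say $k=\frac{1}{2}$, and transport the result to $k=\frac{3}{2}$.

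In the case $n \equiv 0 \pmod{4}$ the $n$-dependent phase is trivial on the $2$-part, so the $r_1$-sum degenerates to a short character sum over $\{1,3\}$ which I expect to evaluate to $1-(-1)^{k-\frac{1}{2}}i$; simultaneously the surviving $2$-part of $mr^*$ reassembles into the argument $4^*m$ and the index $n$ rescales to $\frac{n}{4}$, so the $r_2$-sum becomes exactly $\Kc_{-\rho,k}(4^*m,\tfrac{n}{4};Nc)$ once the normalizing prefactor $\frac{(-1)^{k-\frac{1}{2}}i}{4}\left(\frac{4}{-Nc}\right)\varepsilon_{Nc}^{-2k}$ built into its definition is matched. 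In the case $(-1)^{k-\frac{1}{2}}n \equiv 1 \pmod{4}$ the $n$-dependent phase is a nontrivial additive character on the $2$-part, so the $r_1$-sum becomes a quadratic Gauss sum; lifting the $2$-part from modulus $4$ to modulus $8$ should produce both the scalar $\frac{1}{\sqrt{2}}\left(\frac{(-1)^{k-\frac{1}{2}}n}{2}\right)$ and the reorganized Kloosterman sum $K_{k}(4m,n;8Nc)$ of modulus $8Nc$.

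The main obstacle will be phase and sign bookkeeping rather than any conceptual difficulty. One must track the $\varepsilon_r$ factors and the quadratic reciprocity twists consistently through the CRT splitting, and then evaluate the resulting normalized Gauss sums modulo $4$ and $8$ with the correct eighth-root-of-unity phases. Getting the prefactors $1-(-1)^{k-\frac{1}{2}}i$ and $\frac{1}{\sqrt{2}}\left(\frac{(-1)^{k-\frac{1}{2}}n}{2}\right)$ to emerge with precisely the right normalization -- including their interaction with the factor $\left(\frac{4}{-Nc}\right)\varepsilon_{Nc}^{-2k}$ hidden inside $\Kc_{-\rho,k}$ -- is the delicate step, and I would verify the two cases independently to keep the $2$-adic computations clean.
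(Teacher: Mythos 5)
The paper does not actually prove this lemma: it is quoted directly from Kohnen \cite{koh85}, so there is no internal argument to compare against. Your strategy --- CRT factorization of $r \pmod*{4Nc}$ into a $2$-adic part $r_1 \pmod*{4}$ and an $Nc$-part, quadratic reciprocity to convert $\left(\frac{Nc}{r}\right)$ into $\left(\frac{r}{Nc}\right)$ at the cost of a cross term depending on $r_1$ and $Nc \pmod*{4}$, and evaluation of the resulting short $2$-adic character and Gauss sums --- is the standard route by which such identities are established, so the outline is sound and, if anything, more explicit than what the paper offers.

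There is, however, one concrete point where the ``phase and sign bookkeeping'' you defer actually bites. In the case $n \equiv 0 \pmod*{4}$ you want the $r_1$-sum over $\{1,3\}$ both to carry the $2$-part of the phase coming from $mr^*$ and to evaluate to the $m$-independent scalar $1-(-1)^{k-\frac{1}{2}}i$, with the $m$-dependence ``reassembling into $4^*m$''. But $4^*m$ is an argument modulo $Nc$; it cannot absorb a residual dependence on $m \bmod 4$, and that dependence is genuinely present. For instance, with $k=\frac{3}{2}$, $N=c=1$ and $n \equiv 0 \pmod*{4}$ one computes $K_{\frac{3}{2}}(m,n;4) = \frac{1-i}{4}$ for $m \equiv 0,3\pmod*{4}$ but $\frac{-1+i}{4}$ for $m\equiv 1,2 \pmod*{4}$, whereas the right-hand side equals $\frac{1-i}{4}$ for every $m$. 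The same phenomenon occurs in the second branch. The identity is therefore only valid under the implicit plus-space restriction $(-1)^{k-\frac{1}{2}}m \equiv 0,1 \pmod*{4}$ on the first index --- which is how Kohnen uses it, and which is automatic in this paper since the lemma is only ever applied with $m=0$. A careful execution of your plan would surface this missing hypothesis; as written, the asserted $m$-independent evaluation of the $r_1$-sum is a step that fails for the excluded residues of $m$.
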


Furthermore, we need the projection operator $\mathrm{pr}^{+}$ of a half integral weight form into the Kohnen plus space. Kohnen \cite{koh85}*{pp. 250} constructs this operator explicitly and he computes the right hand side in terms of the Fourier expansions at the cusps $i\infty$, $\frac{1}{2N}$, and $\frac{-\rho}{N}$, where $\rho \coloneqq \left(\frac{-4}{N}\right) \in \{\pm 1\}$ (see \cite{koh85}*{Proposition 3}) . The scaling matrices are
\begin{align*}
\left(\begin{matrix} 1 & 0 \\ 0 & 1\end{matrix}\right), \qquad \left(\begin{matrix} 1 & 0 \\ 2N & 1\end{matrix}\right), \qquad \left(\begin{matrix} 1 & 0 \\ -\rho N & 1\end{matrix}\right),
\end{align*}
which send the cusp $i\infty$ to the aforementioned cusps respectively. The width of $i\infty$ is $h_{i\infty} = 1$, while the widths of the other two cusps both equal $4$. As shown in \cite{thebook}*{Subsection 6.3}, his construction and results on $\mathrm{pr}^{+}$ extend verbatim to the case of half integral weight harmonic Maass forms.

\subsection{Fourier expansions of non-holomorphic Eisenstein series}
We state the following Fourier expansions.
\begin{prop} \label{prop:Fourierexpansionsatcusps}
Let $k \in -\N + \frac{3}{2}$ and $\re(s) > 1-\frac{k}{2}$.
\begin{enumerate}[label={\rm  (\roman*)}]
\item At the cusp $\af=i\infty$, we have 
\begin{multline*}
\hspace*{\leftmargini} \Fc_{k,4N}(\tau,s) = v^{s-\frac{k}{2}} +  \frac{4^{1-s}\pi i^{-k}\Gamma(2s-1)}{\Gamma\left(s+\frac{k}{2}\right)\Gamma\left(s-\frac{k}{2}\right)}\sum_{c \geq 1 } \frac{K_{k}(0,0;4Nc)}{(4Nc)^{2s-1}} v^{1-s-\frac{k}{2}} \\
+ i^{-k}\pi^{s} v^{-\frac{k}{2}}\sum_{n \neq 0} \sum_{c \geq 1} \frac{K_{k}(0,n;4Nc)}{(4Nc)^{2s-1}} \frac{\vt{n}^{s-1} W_{\sgn(n)\frac{k}{2},s-\frac{1}{2}}(4\pi \vt{n} v) e^{2\pi i n u}}{\Gamma\left(s+\sgn(n)\frac{k}{2}\right)} .
\end{multline*}
\item At the cusp $\af=\frac{-\rho}{N}$, $\rho = \left(\frac{-4}{N}\right)$, we have
\begin{multline*}
\hspace*{\leftmargini} \Fc_{k,4N}\Big\vert_{k} \left(\begin{matrix} 1 & 0 \\ -\rho N & 1\end{matrix}\right)(\tau,s)
= \frac{4^{s}\pi i^{-k}\Gamma(2s-1)}{\Gamma\left(s+\frac{k}{2}\right)\Gamma\left(s-\frac{k}{2}\right)}v^{1-s-\frac{k}{2}} \sum_{\substack{c > 0 \\ c \equiv 1 \pmod*{2}}}  \frac{\Kc_{-\rho,k}(0,0;Nc)}{(Nc)^{2s-1}} \\
+ 4^{s}\pi^{s}i^{-k} v^{-\frac{k}{2}} \sum_{n \neq 0} \sum_{\substack{c > 0 \\ c \equiv 1 \pmod*{2}}} \frac{i^{-n}\Kc_{-\rho,k}(0,n;Nc)}{(Nc)^{2s-1}} \frac{\vt{n}^{s-1} W_{\sgn(n)\frac{k}{2},s-\frac{1}{2}}(\pi \vt{n} v) e^{2\pi i n \frac{u}{4}}}{\Gamma\left(s+\sgn(n)\frac{k}{2}\right)}. 
\end{multline*}
\item At the cusp $\af=\frac{1}{2N}$, we have
\begin{multline*}
\hspace*{\leftmargini} \Fc_{k,4N}\Big\vert_{k} \left(\begin{matrix} 1 & 0 \\ 2 N & 1\end{matrix}\right)(\tau,s) = \frac{4^s \pi i^{-k} \Gamma(2s-1)}{\Gamma\left(s+\frac{k}{2}\right)\Gamma\left(s-\frac{k}{2}\right)} v^{1-s-\frac{k}{2}} 
\sum_{\substack{c > 0 \\ c \equiv 1 \pmod*{2}}} \frac{K_{k}(0,0;8Nc)}{(8Nc)^{2s-1}} \\
+ 4^s\pi^s i^{-k} v^{-\frac{k}{2}} \sum_{n \neq 0} \sum_{\substack{c > 0 \\ c \equiv 1 \pmod*{2}}} \frac{K_{k}(0,n;8Nc)}{(8Nc)^{2s-1}}  \frac{\vt{n}^{s-1} W_{\sgn(n)\frac{k}{2},s-\frac{1}{2}}(\pi \vt{n} v) e^{2\pi i n \frac{u}{4}}}{\Gamma\left(s+\sgn(n)\frac{k}{2}\right)}.
\end{multline*}
\end{enumerate}
\end{prop}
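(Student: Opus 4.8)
The plan is to compute all three expansions by the standard unfolding method for Eisenstein series, treating the cusp $\af = i\infty$ first and then transporting the computation to the remaining cusps via the scaling matrices $C_{-\rho}$ and $C_2$. Writing out the coset sum \eqref{eq:Fdef}, the stabilizer $(\Gamma_0(4N))_\infty$ is generated by $\pm T$ with $T = \left(\begin{smallmatrix} 1 & 1 \\ 0 & 1\end{smallmatrix}\right)$, so a set of coset representatives is indexed by the bottom row $(c,d)$ with $4N \mid c$ and $\gcd(c,d) = 1$, taken modulo sign and modulo the left translation $d \mapsto d + c$. I would first isolate the contribution of $c = 0$, which forces $d = \pm 1$ and returns the leading term $v^{s-\frac{k}{2}}$. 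For the terms with $c \neq 0$ I write $c = 4Nc'$ with $c' \geq 1$, use $\im(\gamma\tau) = v\vt{c\tau+d}^{-2}$, and collect the half-integral weight automorphy factor $\left(\frac{c}{d}\right)\varepsilon_d^{2k}(c\tau+d)^{-k}$ coming from the slash operator.

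After grouping the representatives by $c'$ and by the residue $d_0$ of $d$ modulo $c$, the remaining sum over the translates $d = d_0 + c\,m$ is handled by Poisson summation in $u$. The arising integral
\[
\int_{\R} (c\tau+d)^{-k}\vt{c\tau+d}^{-(2s-k)} e^{-2\pi i n u}\,\dm u
\]
is the classical Fourier transform evaluating, for $n \neq 0$, to a $W$-Whittaker function times the elementary $\Gamma$- and $v$-factors displayed in (i), and to the $\Gamma(2s-1)$-term for $n = 0$; I would quote this evaluation rather than reprove it, since it is standard. The accompanying exponential sum over $d_0 \pmod{c}$ with $\gcd(d_0, c) = 1$ is exactly $c\cdot K_k(0,n;c)$ by the definition of the Kloosterman sum with first argument $0$. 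Assembling the $n = 0$ and $n \neq 0$ contributions and reindexing $c' \to c$ yields part (i).

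For the cusps $\af = \frac{-\rho}{N}$ and $\af = \frac{1}{2N}$ I would apply the slash operator by the scaling matrices $C_{-\rho} = \left(\begin{smallmatrix} 1 & 0 \\ -\rho N & 1\end{smallmatrix}\right)$ and $C_2 = \left(\begin{smallmatrix} 1 & 0 \\ 2N & 1\end{smallmatrix}\right)$ and rerun the same unfolding. Because these cusps have width $4$, the bottom-row parameter $c$ of the conjugated coset representatives ranges over the odd integers (scaled by $N$, resp.\ $8N$), which accounts for the restriction $c \equiv 1 \pmod*{2}$, the rescaled Whittaker argument $\pi\vt{n}v$, and the factor $e^{2\pi i n u/4}$ appearing there. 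The exponential sum reorganizes into Kohnen's modified Kloosterman sum $\Kc_{-\rho,k}(0,n;Nc)$ at the cusp $\frac{-\rho}{N}$ and into $K_k(0,n;8Nc)$ at the cusp $\frac{1}{2N}$; the factor $i^{-n}$ in (ii) records the twist introduced by the multiplier under conjugation.

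I expect the main obstacle to be the bookkeeping of the half-integral weight multiplier system under conjugation by $C_{-\rho}$ and $C_2$. Unlike the integral-weight case, the theta multiplier $\left(\frac{c}{d}\right)\varepsilon_d^{2k}$ does not transform by a mere automorphy factor, so one must carefully track how the Kronecker symbols and the $\varepsilon_d$ factors recombine. This is precisely the point at which the modified sum $\Kc_{-\rho,k}$ and the corrective constants $4^s$, $i^{-k}$, $i^{-n}$ emerge, and verifying these is where Kohnen's identities from Lemma \ref{lem:kohnenkloostermanidentity} and the normalizations in his construction of $\mathrm{pr}^+$ will be indispensable.
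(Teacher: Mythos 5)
Your proposal is correct and follows essentially the same route as the paper: unfolding the coset sum, isolating the $c=0$ term, applying Poisson summation in $u$ to the sum over translates (the paper packages this step as Shimura's generalized Lipschitz summation formula and identifies the resulting integrals with $W$-Whittaker functions via \cite{table}*{9.222}), and tracking the theta multiplier at the two width-$4$ cusps following Kohnen's calculations. One small slip worth noting: the cosets of $\left(\Gamma_0(4N)\right)_{\infty}\backslash\Gamma_0(4N)$ are parametrized by \emph{all} coprime bottom rows $(c,d)$ with $4N\mid c$ taken only up to sign (left translation by $T$ fixes the bottom row, so there is no reduction of $d$ modulo $c$ at the level of coset representatives), but your subsequent step, which splits $d=d_0+cm$ and sums over all $m\in\Z$ before applying Poisson summation, handles the full $d$-sum correctly, so this does not affect the argument.
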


\begin{rmks}
\
\begin{enumerate}
\item The first item specializes to \cite{thebook}*{Theorem 6.15 (v)} at $s=1-\frac{k}{2}$ by \eqref{eq:whittakerspectral}.
\item According to \cite{nist}*{item 13.19.3}, we have
\begin{align*}
W_{\lambda,\mu}(x) \sim x^{\lambda}e^{-\frac{1}{2}x}, \qquad x \to \infty,
\end{align*}
which establishes convergence of the Fourier expansions.
\end{enumerate}
\end{rmks}

\begin{proof}[Proof of Proposition \ref{prop:Fourierexpansionsatcusps}]
The proof is standard and follows Kohnen's calculations \cite{koh85}*{pp.\ 252--256}. Since we work in a non-holomorphic setting, we require a generalization of the classical Lipschitz summation formula, which is established by Maass \cite{maass64}*{pp.\ 207} and Siegel \cite{siegel56}*{p.\ 366}. The connection between the integrals there and the $W$-Whittaker functions is given by (see \cite{table}*{9.222} for example)
\begin{align*}
I\left(y,s\pm\frac{k}{2},s\mp\frac{k}{2}\right) = \int_{0}^{\infty}  (t+1)^{s\pm\frac{k}{2}-1} t^{s\mp\frac{k}{2}-1} e^{-ty} \dm t = \frac{\Gamma\left(s\mp\frac{k}{2}\right)}{y^{s}}e^{\frac{y}{2}}W_{\pm\frac{k}{2},s-\frac{1}{2}}(y).
\end{align*}
Combining yields the claimed expansions.
\end{proof}

We define the \emph{plus space Kloosterman zeta function}
\begin{align} \label{eq:Kloostermanzetadefinition}
\begin{split}
\Ks_{k,4N}^+(m,n;s) &\coloneqq \sum_{c > 0} \frac{1+\left(\frac{4}{c}\right)}{(4Nc)^{s}} \sum_{\substack{r \pmod*{4Nc} \\ \gcd(4Nc,r)=1}} \left(\frac{4Nc}{r}\right)\varepsilon_r^{2k} e^{2\pi i \frac{mr^*+nr}{4Nc}}.
\end{split}
\end{align}
The sums $K_{k}(m,n;c)$ do satisfy an analog of Weil's bound for the integral weight Kloosterman sums, see \cite{dit11annals}*{Section 3} for example, which yields absolute convergence of $\Ks_{k,4N}^+(m,n;2s)$ as a function of $s$ for $\re(s) > 1$. The plus space Kloosterman zeta function satisfies a functional equation (see \cite{iwa02}*{Chapter 9} for example), which is motivated by \eqref{eq:Kloostermanfunctionalequation}.  Selberg \cite{selberg65}*{(3.9)}, Goldfeld--Sarnak \cite{gosa}*{Theorem 1} and Goldfeld--Hoffstein \cite{goho}*{Section 3} proved that $\Ks_{k,4N}^+(m,n;2s)$ admits a meromorphic continuation in $s$ to $\re(s) > \frac{1}{2}$ with a finite number of poles inside the real interval $\left(\frac{1}{2},1\right)$. The residues can be found in \cite{ilt22}*{Proposition 2.5} for example. In the same paper, the plus space Kloosterman zeta function yields a certain trace of the Selberg--Poincar{\'e} series \cite{ilt22}*{Theorem 3.1} along the lines of Duke, Imamo\={g}lu and T\'{o}th \cite{dit11annals}*{Proposition $4$}.

Summing up, the Fourier expansion of  $\Fc_{k,4N}^+$ is as follows.
\begin{prop} \label{prop:Eisensteinprojected}
Let $k \in -\N + \frac{3}{2}$ and $\re(s) > 1-\frac{k}{2}$. Then
\begin{multline*}
\Fc_{k,4N}^+(\tau,s) = \frac{2}{3}\left(v^{s-\frac{k}{2}} + \frac{4^{1-s}\pi i^{-k}\Gamma(2s-1)}{\Gamma\left(s+\frac{k}{2}\right)\Gamma\left(s-\frac{k}{2}\right)} \Ks_{k,4N}^+(0,0;2s) v^{1-s-\frac{k}{2}} \right) \\
+ \frac{2}{3}i^{-k}\pi^{s}v^{-\frac{k}{2}} \sum_{\substack{n \neq 0 \\ (-1)^{k-\frac{1}{2}}n \equiv 0,1 \pmod*{4}}}  \frac{\Ks_{k,4N}^+(0,n;2s) \vt{n}^{s-1}}{\Gamma\left(s+\sgn(n)\frac{k}{2}\right)} W_{\sgn(n)\frac{k}{2},s-\frac{1}{2}}(4\pi \vt{n} v) e^{2\pi i n u}.
\end{multline*}
\end{prop}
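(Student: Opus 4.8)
The plan is to compute $\Fc_{k,4N}^+ = \mathrm{pr}^+\Fc_{k,4N}$ directly from the three Fourier expansions recorded in Proposition~\ref{prop:Fourierexpansionsatcusps}. Kohnen's projection $\mathrm{pr}^+$ is the explicit finite combination of slash operators $f\vert_k(BA_\nu^*)$ recalled in Section~\ref{sec:prelim}, and on pp.~$250$ of \cite{koh85} its action is rewritten so that the $i\infty$-Fourier expansion of $f\vert\mathrm{pr}^+$ becomes an explicit linear combination of the expansions of $f$ at the three cusps $i\infty$, $\tfrac{1}{2N}$ and $\tfrac{-\rho}{N}$, where $\rho=\left(\frac{-4}{N}\right)$. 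First I would substitute the three expansions of $\Fc_{k,4N}(\cdot,s)$ from Proposition~\ref{prop:Fourierexpansionsatcusps} into this recipe. The cusp $i\infty$ supplies the unrestricted Kloosterman sums $K_{k}(0,n;4Nc)$ together with the Whittaker argument $4\pi\vt{n}v$ and the phase $e^{2\pi i n u}$, whereas the cusps $\tfrac{-\rho}{N}$ and $\tfrac{1}{2N}$ supply the modified sums $\Kc_{-\rho,k}(0,n;Nc)$ and $K_{k}(0,n;8Nc)$ over odd $c$, each carrying the argument $\pi\vt{n}v$ and the phase $e^{2\pi i n u/4}$; the width-$4$ scaling built into the matrices $C_2$ and $C_{-\rho}$ reconciles the latter data with the $i\infty$ normalization.

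The heart of the argument is to merge these three streams using Kohnen's Kloosterman identities of Lemma~\ref{lem:kohnenkloostermanidentity}, specialized to $m=0$. Because $\mathrm{pr}^+$ maps into Kohnen's plus space, the output is supported on indices with $(-1)^{k-\frac{1}{2}}n\equiv 0,1\pmod*{4}$, and these are precisely the two cases $n\equiv 0\pmod*{4}$ and $(-1)^{k-\frac{1}{2}}n\equiv 1\pmod*{4}$ covered by Lemma~\ref{lem:kohnenkloostermanidentity}. That lemma rewrites $\Kc_{-\rho,k}\!\left(0,\tfrac{n}{4};Nc\right)$ and $K_{k}(0,n;8Nc)$ in terms of $K_{k}(0,n;4Nc)$, so that the odd-$c$ contributions of the two finite cusps recombine with the full-$c$ contribution of $i\infty$ into $\big(\sum_{c>0}+\sum_{c\ \mathrm{odd}}\big)(4Nc)^{1-2s}K_{k}(0,n;4Nc)$. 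This is exactly the Kloosterman zeta function $\Ks_{k}(0,n;2s)$ of \eqref{eq:Kloostermanzetadefinition}, whose defining weight $1+\left(\frac{4}{c}\right)$ doubles the odd-$c$ terms and leaves the even-$c$ terms simple.

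Finally I would collect the scalar prefactors. Tracking the $\tfrac13$ coming from $\mathrm{pr}^+$ against the reinforcement of the other-cusp contributions yields the global constant $\tfrac23$ multiplying both the leading term $v^{s-\frac{k}{2}}$ and the entire expansion, while the archimedean factors $\frac{4^{1-s}\pi i^{-k}\Gamma(2s-1)}{\Gamma\left(s+\frac{k}{2}\right)\Gamma\left(s-\frac{k}{2}\right)}$ and $\frac{i^{-k}\pi^{s}}{\Gamma\left(s+\sgn(n)\frac{k}{2}\right)}$ carry over unchanged from Proposition~\ref{prop:Fourierexpansionsatcusps}(i) and the Whittaker argument stabilizes at $4\pi\vt{n}v$. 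The hard part will be precisely this final merging: one must keep track of the powers of $4$, the phases $i^{-n}$, the factor $\left(\frac{(-1)^{k-1/2}n}{2}\right)$ from Lemma~\ref{lem:kohnenkloostermanidentity}, the width-$4$ rescalings of $C_2$ and $C_{-\rho}$, and the two differing Whittaker arguments $\pi\vt{n}v$ and $4\pi\vt{n}v$, and check that they conspire to assemble exactly $\Ks_{k}(0,n;2s)$ with no residual constants. Absolute convergence of the resulting $c$-series in the range $\re(s)>1-\frac{k}{2}$, which forces $\re(2s)>2-k\geq\tfrac{3}{2}$, follows from the Weil-type bound for $K_{k}$ quoted after \eqref{eq:Kloostermanzetadefinition}.
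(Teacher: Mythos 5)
Your proposal is correct and follows essentially the same route as the paper: substitute the three cuspidal expansions of Proposition \ref{prop:Fourierexpansionsatcusps} into Kohnen's explicit formula for $\mathrm{pr}^+$ in terms of Fourier coefficients at the cusps $i\infty$, $\tfrac{1}{2N}$, $\tfrac{-\rho}{N}$, and merge the resulting Kloosterman sums via Lemma \ref{lem:kohnenkloostermanidentity} into $\Ks_k(0,n;2s)$. The paper additionally makes explicit the one point you treat implicitly, namely that Kohnen's Proposition 3 (stated for holomorphic forms) carries over verbatim when $q^{n/h_{\af}}$ is replaced by the Whittaker data $W_{\sgn(n)\frac{k}{2},s-\frac{1}{2}}(4\pi\vt{n}v/h_{\af})e^{2\pi inu/h_{\af}}$.
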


\begin{rmk}
This result can be found as a part of \cite{jkk1}*{Theorem 4.4} as well.
\end{rmk}

\begin{proof}
Let $c_{\af}(n,v)$ be the $n$-th Fourier coefficient of the Fourier expansion of $\Fc_{k,4N}$ at the cusp $\af$. Kohnen's result \cite{koh85}*{Proposition 3} on the projection operator in terms of Fourier expansions generalizes straightforwardly to harmonic Maass forms by \cite{thebook}*{Proposition 6.7}. We observe that the proof in our framework with an additional spectral parameter is the same except for the fact that our Fourier expansions are given in terms of 
\begin{align*}
W_{\sgn(n)\frac{k}{2},s-\frac{1}{2}}\left(4\pi \vt{n} \frac{v}{h_{\af}}\right) e^{2\pi i n \frac{u}{h_{\af}}}
\end{align*}
instead of $q^{\frac{n}{h_{\af}}}$. In other words, the necessary calculations carry over to our slightly more general Fourier expansion, getting
\begin{multline*}
\Fc_{k,4N}^+(\tau,s) = \frac{2}{3}\sum_{\substack{n \in \Z \\ n \equiv 0 \pmod*{4}}} \left(c_{i\infty}(n,v) + \left(1-(-1)^{k-\frac{1}{2}}i\right)2^{2k-2} i^{\frac{n}{4}} c_{-\frac{\rho}{N}}\left(\frac{n}{4},16v\right)\right)e^{2\pi i n u} \\
+ \frac{2}{3}\sum_{\substack{n \in \Z \\ (-1)^{k-\frac{1}{2}}n \equiv 1 \pmod*{4}}} \left(c_{i\infty}(n,v) + 2^{k-\frac{3}{2}} \left(\frac{(-1)^{k-\frac{1}{2}}n}{2}\right) c_{\frac{1}{2N}}(n,4v) \right)e^{2\pi i n u}.
\end{multline*}
The result follows by inserting the Fourier expansions from Proposition \ref{prop:Fourierexpansionsatcusps} and utilizing Kohnen's identity on Kloosterman sums (see Lemma \ref{lem:kohnenkloostermanidentity}).
\end{proof}

\subsection{Work of Pei and Wang}
Let $\Nc \in \N$, $N > 1$ be odd and square-free, and define
\begin{align}
\chi_{d} &\coloneqq \left(\frac{d}{\cdot}\right),  \nonumber \\
L_{\Nc}(s,\chi) &\coloneqq L(s,\chi)\prod_{\substack{p \text{ prime} \\ p \mid \Nc}} \left(1-\chi(p)p^{-s}\right) = \prod_{\substack{p \text{ prime} \\ p \nmid \Nc}}\frac{1}{1-\chi(p)p^{-s}} = \sum_{\substack{n \geq 1 \\ \gcd(n,\Nc)=1}} \frac{\chi(n)}{n^s}. \label{eq:L_Ndef}
\end{align}
In \cite{peiwang}*{Theorem 1 (I)}, Pei and Wang show that the dimension of $E_{\frac{3}{2}}^+(4N)$ (the Eisenstein plus subspace of $M_{\frac{3}{2}}(4N)$) equals $2^{\omega(N)}-1$. Here, $\omega(N) \coloneqq \sum_{p \text{ prime}, \ p \mid N} 1$ counts the distinct prime divisors of $N$. The idea of their proof is to construct a basis $E_{\frac{3}{2}}^+(4N)$ out of a basis of $E_{\frac{3}{2}}(4N)$. The latter basis is the main result in Pei \cite{peiII}, while the former basis is explicitly given by the generalized Cohen--Eisenstein series $\Hs_{\ell,N}$ from \eqref{eq:Hsdef} with $1 < \ell \mid N$.

We emphasize that there is no generalized Cohen--Eisenstein series for the divisor $\ell=1$ in weight $\frac{3}{2}$. That is, the generating function of $H_{1,N}(n)$ is not modular. Let $\ell \mid N$ and define
\begin{align*}
\sigma_{\ell,N,s}(r) \coloneqq
\sum_{\substack{d \mid r \\ \gcd(d,\ell)=1 \\ \gcd\left(\frac{r}{d},\frac{N}{\ell}\right)=1}} d^s, \qquad \sigma_{N,s}(r) \coloneqq \sigma_{N,N,s}(r).
\end{align*} 
For $\ell \neq N$, the Fourier coefficients in weight $\frac{3}{2}$ are given by
\begin{align} \label{eq:HNNdef}
H_{N,N}(n) 
&\coloneqq \begin{cases}
L_N\left(-1,\mathrm{id}\right) & \text{if } n=0, \\
L_N(0,\chi_t) \sum\limits_{\substack{a \mid m \\ \gcd(a,N) = 1}} \mu(a) \chi_t(a) \sigma_{N,1}\left(\frac{m}{a}\right) & \begin{array}{@{}l} \text{if } -n=tm^2, \\ t \text{ fundamental}, \end{array}  \\
0 & \text{else},
\end{cases}
\end{align}
as well as by
\begin{align} \label{eq:HellNdef}
H_{\ell,N}(n)
&\coloneqq \begin{cases}
0 & \text{if } n=0, \\
L_{\ell}\left(0,\chi_{t}\right) \prod\limits_{\substack{p \text{ prime} \\ p\mid \frac{N}{\ell}}} \frac{1-\chi_t(p)p^{-1}}{1-p^{-2}} \sum\limits_{\substack{a \mid m \\ \gcd(a,N) = 1}} \mu(a) \chi_t(a) \sigma_{\ell,N,1}\left(\frac{m}{a}\right) & \begin{array}{@{}l} \text{if } -n=tm^2, \\ t \text{ fundamental}, \end{array} \\
0 & \text{else}.
\end{cases}
\end{align}
Note that this includes the definition of the coefficients $H_{1,N}(n)$ and that the $H_{1,1}(n)$ are the classical Hurwitz class numbers.

\begin{rmk}
Here, we included the summation condition that $\gcd(a,N)=1$ in Pei's and Wang's sums running over $a \mid m$ inside the definition of $H_{\ell,N}(n)$ for all $\ell \mid N$. This is justified, since these sums arise in their paper from the same sums as in our Lemma \ref{lem:TdivisorSum}. In addition to that, \eqref{eq:principalcharactertwist} below requires to twist these sums by $\left(\frac{N}{a}\right)^2$, which is equivalent to imposing the extra summation condition $\gcd(a,N)=1$, otherwise \eqref{eq:twistimpact} would not be correct.
\end{rmk}

\section{The plus space Kloosterman zeta function in higher levels} \label{sec:Kloostermanzeta}

If $N=1$, the plus space Kloosterman zeta function is evaluated in \cite{dit11annals}*{Lemma 4}, \cite{andu}*{Proposition 5.7}, \cite{ibsa}*{Section 2} or \cite{wong}*{Propositions 1 and 2} (after applying \cite{koh85}*{Proposition 5}) for example.

\subsection{General results} \label{subsec:Kloostermanzetageneral}

We begin by utilizing the following result by Shimura as well as two lemmas by Sturm\footnote{Note that there is a typo in Sturm's citation of Shimura's result, it should be $a^{\frac{1-L}{2}-\sfr}b^{2-L-2\sfr}$ in the definition of the coefficients $\beta$.}.
\begin{lemma}[\protect{\cite{shimura75procl}*{Proposition 1, (3.9)}, \cite{sturm}*{pp.\ $226$, $227$}}] \label{lem:shimurasturm}
Let $\Nc \in \N$ such that $\Nc \equiv 0 \pmod*{4}$, $\lambda = \frac{L}{2}$, $L \in 2\N-1$. Let $\omega$ be a Dirichlet character modulo $\Nc$ such that $\omega(-1) = 1$, and put 
\begin{itemize}
\item $\omega_{tm^2} \coloneqq \left(\frac{-1}{\cdot}\right)^{\frac{1-L}{2}}\left(\frac{t\Nc}{\cdot}\right)\omega$, whenever $n=tm^2$ with $t\neq 0$ square-free, and $\gcd(\cdot, t\Nc)=1$,
\item $\omega^* = \left(\frac{\Nc}{\cdot}\right)\omega$.
\end{itemize}
For every $c \in \N$, write $\Nc c=Md$, where $\gcd(d,M)=1$ and $M$ divides all sufficiently high powers of $\Nc$. Denote this last condition by $M \mid \Nc^{\infty}$.
\begin{enumerate}[label={\rm (\roman*)}]
\item Define\footnote{This fixes a typo in \cite{sturm}*{(4)}, the correct definition can be found in \cite{shimura75procl}*{(3.4)}.}
\begin{align*}
b(n,\sfr,\omega) \coloneqq \sum_{d > 0} \frac{\left(\frac{-\Nc}{d}\right)\varepsilon_d^L \omega(d)}{d^{\sfr+\lambda}} \sum_{r=1}^{d} \left(\frac{r}{d}\right) e^{2\pi i\frac{nr}{d}}.
\end{align*}
Suppose that $n \neq 0$. Then
\begin{align*}
b(n,\sfr,\omega) = \frac{L_{\Nc}\left(\sfr-\frac{1-L}{2},\omega_n\right)}{L_{\Nc}(2\sfr-(1-L),\omega^2)}\sum_{\substack{a,b \in \N \\ \gcd(a,\Nc)=1 \\ \gcd(b,\Nc)=1 \\ (ab)^2 \mid n }} \mu(a)\omega_n(a)\omega(b)^2 a^{\frac{1-L}{2}-s}b^{2-L-2\sfr}.
\end{align*}
\item We have
\begin{multline*}
\hspace*{\leftmargini}
\sum_{r =1}^{\Nc c} \omega(r) \varepsilon_r^{L}
\left(\frac{\Nc c}{r}\right) e^{2\pi i \frac{nr}{\Nc c}} \\
= \left(\left(\frac{-\Nc}{d}\right)\varepsilon_d^L \omega^*(d) \sum_{r=1}^{d} \left(\frac{r}{d}\right) e^{2\pi i\frac{nr}{d}}\right)\left(\sum_{r=1}^M \omega(r)\varepsilon_r^{L} \left(\frac{M}{r}\right)e^{2\pi i \frac{nr}{M}}\right).
\end{multline*}
\item Define
\begin{align*}
c(n,\sfr,\omega) \coloneqq \sum_{\Nc \mid M \mid \Nc^{\infty}} \frac{1}{M^{\lambda+\sfr}} \sum_{r=1}^M \omega(r)\varepsilon_r^{L} \left(\frac{M}{r}\right)e^{2\pi i \frac{nr}{M}}.
\end{align*}
Then,
\begin{align*}
\sum_{c \geq 1} \frac{1}{(\Nc c)^{\sfr+\lambda}}\sum_{r=1}^{\Nc c} \omega(r)\varepsilon_r^{L} \left(\frac{\Nc c}{r}\right)e^{2\pi i \frac{nr}{\Nc c}} = b(n,\sfr,\omega^*)c(n,\sfr,\omega),
\end{align*}
and $c(n,\sfr,\omega)$ is a finite Dirichlet series if $n \neq 0$.
\item If $n=0$ then
\begin{align*}
b(0,\sfr,\omega) = \frac{L_{\Nc}\left(2\sfr+L-2, \omega^2\right)}{L_{\Nc}\left(2\sfr-(1-L), \omega^2\right)}.
\end{align*}
\end{enumerate}
\end{lemma}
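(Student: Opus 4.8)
The plan is to reconstruct the computations of Shimura \cite{shimura75procl} and Sturm \cite{sturm}, taking care to correct the typographical issues flagged in the footnotes. The central object throughout is the twisted quadratic Gauss sum $\sum_{r=1}^{d} \left(\frac{r}{d}\right) e^{2\pi i nr/d}$, and the whole lemma amounts to evaluating its generating Dirichlet series and understanding how it factorizes.

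I would begin with the multiplicativity statement (ii), which is structural rather than analytic. Since $M \mid \Nc^{\infty}$ and $\gcd(M,d)=1$, every prime dividing $\Nc$ already divides $M$, so $d$ is coprime to $\Nc$; in particular $\omega$ factors through the residue $r \bmod M$ under the Chinese Remainder Theorem splitting of $r \bmod \Nc c = r \bmod Md$. Applying this splitting together with the multiplicativity of the Kronecker symbol, the character sum separates into a $d$-part and an $M$-part. The factors $\left(\frac{-\Nc}{d}\right)$, $\varepsilon_d^L$, and the twist $\omega^*(d) = \left(\frac{\Nc}{d}\right)\omega(d)$ are produced by reciprocating the cross term $\left(\frac{M}{r}\right)\left(\frac{d}{r}\right)$ and by the reciprocity law governing $\varepsilon_r$; this reciprocity bookkeeping is the delicate part of (ii).

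For (i), I would use that for $\gcd(d,\Nc)=1$ the Gauss sum is multiplicative in $d$, and that the classical evaluation of quadratic Gauss sums forces it to vanish unless the square-free kernel of $d$ divides $n$. Writing $n = tm^2$ with $t$ square-free then turns $b(n,\sfr,\omega)$ into an Euler product. At a prime $p \nmid n\Nc$ the local factor is a geometric series summing to the $p$-factor of the quotient $L_{\Nc}(\sfr - \tfrac{1-L}{2}, \omega_n)/L_{\Nc}(2\sfr-(1-L),\omega^2)$, where the twist $\omega_n = \left(\frac{-1}{\cdot}\right)^{\frac{1-L}{2}}\left(\frac{t\Nc}{\cdot}\right)\omega$ is exactly the character recording the quadratic symbol produced by the Gauss sum. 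At the primes $p \mid m$ the geometric series truncates and yields the finite correction $\sum_{(ab)^2 \mid n}\mu(a)\omega_n(a)\omega(b)^2 a^{\frac{1-L}{2}-\sfr}b^{2-L-2\sfr}$. Matching these truncated local factors against the claimed finite sum is the main computational obstacle, since one must simultaneously track the explicit Gauss-sum value and the interaction of $\omega_n$ with $\left(\frac{t\Nc}{\cdot}\right)$.

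Statement (iii) is then a formal consequence: inserting the factorization from (ii) into the sum over $c$ and reindexing by the coprime pair $(M,d)$ with $\Nc \mid M \mid \Nc^{\infty}$ separates the series as $b(n,\sfr,\omega^*)\,c(n,\sfr,\omega)$. Finiteness of $c(n,\sfr,\omega)$ for $n \neq 0$ follows because the $M$-Gauss sum vanishes once $\nu_p(M)$ exceeds the $p$-adic valuation of $n$ for some $p \mid \Nc$, leaving only finitely many nonzero terms. For (iv) the exponential in the $n=0$ Gauss sum is trivial, so $\sum_{r=1}^d \left(\frac{r}{d}\right)$ is a nonvanishing character sum precisely when $\left(\frac{\cdot}{d}\right)$ is principal, i.e.\ when $d$ is a perfect square; restricting to $d=e^2$ and summing the resulting Dirichlet series in $e$ — where the Kronecker symbol and the multiplier $\varepsilon_d^L$ trivialize and $\omega(e^2)=\omega^2(e)$ — collapses $b(0,\sfr,\omega)$ to the stated ratio $L_{\Nc}(2\sfr+L-2,\omega^2)/L_{\Nc}(2\sfr-(1-L),\omega^2)$. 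The unifying difficulty across all four parts is the consistent handling of the half-integral-weight multiplier, namely the $\varepsilon_d$ and Kronecker factors, under both the CRT factorization and quadratic reciprocity.
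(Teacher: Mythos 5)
The paper offers no proof of this lemma at all: it is imported verbatim (modulo the typo corrections noted in the footnotes) from Shimura's Proposition 1 and Sturm's computations, so the only available comparison is with those sources. Your outline reconstructs exactly their arguments --- CRT splitting plus quadratic reciprocity for (ii), the Euler-product evaluation of the quadratic Gauss sums for (i), formal reindexing over the coprime pair $(M,d)$ for (iii), and the observation that the untwisted Gauss sum vanishes unless $d$ is a square (contributing $\phi(d)$) for (iv) --- and is correct as a proof strategy, with the genuinely laborious reciprocity and local-factor bookkeeping accurately identified rather than glossed over.
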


We cite the following results by Pei \cite{peiI} and Maass \cite{maass1937}.
\begin{lemma}[\protect{\cite{peiI}*{Lemma 2.1}, \cite{maass1937}*{p.\ $140$}}] \label{lem:maasseval1}
Let
\begin{align*}
a\left(p^j,n\right) \coloneqq \begin{cases}
\sum\limits_{r=1}^{2^j} \left(\frac{2^j}{r}\right)\varepsilon_{r} e^{2\pi i\frac{nr}{2^j}} & \text{if } p=2, \\[10pt]
\varepsilon_{p^j}^{-1} \sum\limits_{r=1}^{p^j} \left(\frac{r}{p^j}\right)e^{2\pi i\frac{nr}{p^j}} & \text{if } p > 2,
\end{cases}
\end{align*}
where $j \in \N$. 
\begin{enumerate}[label={\rm (\roman*)}]
\item Then, we have
\begin{align*}
\sum_{\Nc \mid M \mid \Nc^{\infty}} \frac{1}{M^{\sfr+\lambda}} \sum_{r=1}^M \varepsilon_r \left(\frac{M}{r}\right)e^{2\pi i \frac{nr}{M}} = \prod_{\substack{p \text{ prime} \\ p \mid \Nc}} \sum_{j\geq\nu_p(\Nc)} \frac{a\left(p^j,n\right) }{p^{j\left(\sfr+\lambda\right)}}.
\end{align*}
\item Let
\begin{align*}
\delta(x) \coloneqq \begin{cases}
1 & \text{if } x \text{ is an integer}, \\
0 & \text{otherwise},
\end{cases}
\end{align*}
and $\phi$ be Euler's totient function. Then, we have
\begin{align*}
a\left(2^j,n\right) = e^{\frac{\pi i}{4}} \cdot \begin{cases}
0, & \text{if } 2 \mid j, 2^{j-2} \nmid n, \\
2^{j-\frac{3}{2}}e^{\pi i \frac{m}{2}}, & \text{if } 2 \mid j, 2^{j-2} \mid n, m = \frac{n}{2^{j-2}}, 2 \mid m, \\
2^{j-\frac{3}{2}}e^{\pi i \frac{m-1}{2}}, & \text{if } 2 \mid j, 2^{j-2} \mid n, m = \frac{n}{2^{j-2}}, 2 \nmid m, \\
2^{j-1}\delta\left(\frac{u-1}{4}\right)e^{\pi i \frac{u-1}{4}}, & \text{if } 2 \nmid j, n=2^{j-3}u, 2\nmid u, \\
0, & \text{if } 2 \nmid j, n=2^{j-3}u, 2\mid u.
\end{cases}
\end{align*}
If $p > 2$, we have
\begin{align*}
a\left(p^j,n\right) = \begin{cases}
0, & \text{if } p^{j-1} \nmid n,\\
p^{j-\frac{1}{2}}\left(\frac{\frac{n}{p^{j-1}}}{p}\right), & \text{if } p^{j-1} \mid n, p^j \nmid n, 2 \nmid j, \\
0, & \text{if }  p^{j-1} \mid n, p^j \mid n, 2 \nmid j, \\
-p^{j-1}, & \text{if } p^{j-1} \mid n, p^j \nmid n, 2 \mid j, \\
\phi\left(p^j\right), & \text{if } p^{j-1} \mid n,  p^j \mid n, 2 \mid j.
\end{cases}
\end{align*}
\end{enumerate}
\end{lemma}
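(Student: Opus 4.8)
The plan is to treat the two parts by entirely classical means: part (i) is a multiplicativity statement proved with the Chinese Remainder Theorem and quadratic reciprocity, while part (ii) is a pair of explicit local Gauss-sum evaluations. For part (i), I would first parametrise the set $\{M : \Nc \mid M \mid \Nc^{\infty}\}$ by exponent vectors $(e_p)_{p \mid \Nc}$ with $e_p \geq \nu_p(\Nc)$ through $M = \prod_{p \mid \Nc} p^{e_p}$, so that after interchanging the outer sum with an Euler product it suffices to factor the inner Gauss sum $\sum_{r \bmod M} \varepsilon_r \left(\frac{M}{r}\right) e^{2\pi i nr/M}$ (the Kronecker symbol already restricting to $r$ coprime to $M$) into local pieces. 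Writing $M = \prod_p p^{e_p}$ and $M_p = M/p^{e_p}$, the additive character splits as $e^{2\pi i nr/M} = \prod_p e^{2\pi i n r \overline{M_p}/p^{e_p}}$ and the Kronecker symbol splits in its numerator, $\left(\frac{M}{r}\right) = \prod_p \left(\frac{p^{e_p}}{r}\right)$. The one delicate point is the multiplier $\varepsilon_r$: since it depends only on $r \bmod 4$ and $4 \mid \Nc \mid M$, it is controlled entirely by the dyadic component $r \bmod 2^{e_2}$, and quadratic reciprocity transfers each $\left(\frac{p^{e_p}}{r}\right)$ into $\left(\frac{r}{p^{e_p}}\right)$ at the cost of sign factors that recombine with $\varepsilon_r$. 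Reindexing each local variable by $\overline{M_p}$, a bijection of $(\Z/p^{e_p})^{\times}$, then identifies the $p=2$ factor with $a(2^{e_2},n)$ and the odd factors with $a(p^{e_p},n)$. This reciprocity bookkeeping is precisely the factorisation Shimura and Sturm carry out in Lemma \ref{lem:shimurasturm}, so I would arrange part (i) to feed directly on that result.

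For the odd local factors in part (ii) the evaluation is short. Since $\left(\frac{r}{p^j}\right) = \left(\frac{r}{p}\right)^j$, the sum depends on the parity of $j$. When $j$ is even the symbol reduces to the indicator of $\gcd(r,p)=1$, and because $p^j \equiv 1 \pmod{8}$ forces $\varepsilon_{p^j}=1$, the sum is the Ramanujan sum $c_{p^j}(n)$, whose standard evaluation gives $\phi(p^j)$, $-p^{j-1}$, or $0$ according to whether $p^j \mid n$, or $p^{j-1}\mid n$ but $p^j\nmid n$, or $p^{j-1}\nmid n$. When $j$ is odd, the layer substitution $r = a + pb$ with $a \bmod p$ and $b \bmod p^{j-1}$ separates the sum: the $b$-sum forces $p^{j-1}\mid n$, and the $a$-sum collapses to $\left(\frac{m}{p}\right)$ times the classical Gauss sum $\sum_{a}\left(\frac{a}{p}\right)e^{2\pi i a/p} = \varepsilon_p\sqrt{p}$ with $m = n/p^{j-1}$. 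As $\varepsilon_{p^j}=\varepsilon_p$ for odd $j$, the normalisation $\varepsilon_{p^j}^{-1}$ cancels $\varepsilon_p$ and leaves $p^{j-\frac{1}{2}}\left(\frac{m}{p}\right)$, matching the three odd-$j$ cases.

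The real obstacle is the dyadic factor $a(2^j,n)$, where I expect the bulk of the effort to lie. Here $\left(\frac{2^j}{r}\right) = \left(\frac{2}{r}\right)^j$ restricts the sum to odd $r$ and depends on $j \bmod 2$, while the multiplier admits the decomposition $\varepsilon_r = \tfrac{1+i}{2} + \tfrac{1-i}{2}\left(\frac{-1}{r}\right)$. Substituting both reduces $a(2^j,n)$ to a fixed linear combination of twisted exponential sums $\sum_{r \text{ odd} \bmod 2^j} \chi(r)\, e^{2\pi i nr/2^j}$, with $\chi$ running through the real characters of conductor dividing $8$, namely $1$, $\left(\frac{-1}{\cdot}\right)$, $\left(\frac{2}{\cdot}\right)$ and $\left(\frac{-2}{\cdot}\right)$. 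Each such sum is evaluated by a layer decomposition analogous to the odd case: the part of $r$ beyond the conductor of $\chi$ produces a geometric sum enforcing a divisibility constraint on $n$ by a power of $2$, and the residual sum over the conductor is a finite quadratic Gauss sum mod $4$ or mod $8$, contributing the ubiquitous factor $e^{\pi i/4}$. Re-collecting the contributions and sorting by the parity of $j$ and the dyadic valuation of $n$ is a lengthy but mechanical case analysis, and it is exactly what produces the five-way split, together with the auxiliary quantities $m = n/2^{j-2}$, $u$, and the indicator $\delta$, recorded in the statement. I would regard pinning down the normalisations in this dyadic case as the main technical hurdle; the odd primes and the multiplicativity in part (i) are comparatively routine.
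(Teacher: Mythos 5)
Your proposal is correct in outline, but it takes a genuinely different route from the paper for part (ii). The paper does not evaluate the dyadic and odd-prime Gauss sums from scratch: it observes that Maass already computed closely related quantities $a_c(n)$ in 1937, that these are multiplicative in $c$, and that they match the $a(p^j,n)$ of the lemma via the dictionary $e^{-\pi i/4}a(2^{j+1},n)=a_{2^j}(n)$ for $p=2$ and $a(p^j,n)=a_{p^j}(n)$ for $p>2$; the explicit case lists are then read off from Maass's tables at $L=1$. Your approach replaces this citation with a direct evaluation: the odd-prime computation you give (Ramanujan sums for even $j$, the layer substitution $r=a+pb$ and the classical Gauss sum $\sum_a\left(\tfrac{a}{p}\right)e^{2\pi ia/p}=\varepsilon_p\sqrt{p}$ for odd $j$, with $\varepsilon_{p^j}=\varepsilon_p$ cancelling the normalisation) is complete and correct. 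The dyadic case, via $\varepsilon_r=\tfrac{1+i}{2}+\tfrac{1-i}{2}\left(\tfrac{-1}{r}\right)$ and twisted exponential sums for the four real characters of conductor dividing $8$, is the right method but is only sketched; you correctly flag it as the main hurdle, and it is precisely the computation the paper avoids by citing Maass. For part (i) your CRT-plus-reciprocity argument coincides in spirit with the paper's one-line justification (``quadratic reciprocity essentially''); one small caveat is that Lemma \ref{lem:shimurasturm} (ii) is not quite the right crutch here, since that result separates the $\Nc$-part of the modulus from the coprime part, whereas part (i) requires factoring the $\Nc$-part itself into an Euler product over $p\mid\Nc$ -- the technique is parallel, but you would still need to carry out the reindexing by $\overline{M_p}$ and check that the resulting symbols $\left(\frac{M_p}{p^{e_p}}\right)$ cancel across the product, which is the sign bookkeeping you allude to but do not verify. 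What your route buys is self-containedness; what the paper's route buys is brevity and an audit trail back to the classical source.
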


\begin{proof}
\
\begin{enumerate}[label={\rm (\roman*)}]
\item This follows by quadratic reciprocity.
\item In \cite{maass1937}*{p.\ $140$}, Maass evaluates closely related numbers $a_c(n)$, which he defines in \cite{maass1937}*{(7), (10)}. He notes that if $\gcd(c_1,c_2)=1$ then $a_{c_1c_2}(n) = a_{c_1}(n) a_{c_2}(n)$. Hence, it suffices to evaluate on prime powers. His numbers $a_{p^j}(n)$ are related to the numbers $a(p^j,n)$ from the Lemma by the identities
\begin{align} \label{eq:multipliersumrewrite}
e^{-\frac{\pi i}{4}}a\left(2^{j+1},n\right) = a_{2^j}(n), \qquad a\left(p^j,n\right) = a_{p^j}(n)
\end{align}
for $p > 2$. The claim follows by the special case that $L=1$ in Maass' result. \qedhere
\end{enumerate}
\end{proof}

Furthermore, we rewrite the divisor function type sums in Lemma \ref{lem:shimurasturm}. Define
\begin{align*}
T_{\Nc,\sfr}^{\chi}(n) \coloneqq \sum_{\substack{0 < d \mid n \\ \gcd(d,\Nc)=1}} \mu(d)\chi(d)d^{\sfr-1}\sigma_{\Nc,2\sfr-1}\left(\frac{n}{d}\right).
\end{align*}

In terms of this notation, we have the following technical result.
\begin{lemma} \label{lem:TdivisorSum}
Let $\chi$ be a Dirichlet character and $n = tm^2$ with $t$ square-free. Then, we have
\begin{align*}
\sum_{\substack{a,b \in \N \\ \gcd(a,\Nc)=1 \\ \gcd(b,\Nc)=1 \\ (ab)^2 \mid n}} \mu(a) \chi(a) a^{-\sfr}b^{1-2\sfr} = T_{\Nc,1-\sfr}^{\chi}(m).
\end{align*}
\end{lemma}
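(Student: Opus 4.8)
The plan is to expand the right-hand side $T_{\Nc,1-\sfr}^{\chi}(m)$ into an explicit double sum and show that it coincides termwise with the left-hand side. First I would substitute $\sfr \mapsto 1-\sfr$ in the definition of $T_{\Nc,\cdot}^{\chi}$ and unfold the inner divisor sum $\sigma_{\Nc,1-2\sfr}$, obtaining
\begin{align*}
T_{\Nc,1-\sfr}^{\chi}(m) &= \sum_{\substack{0 < d \mid m \\ \gcd(d,\Nc)=1}} \mu(d)\chi(d)\, d^{-\sfr} \sum_{\substack{0 < e \mid (m/d) \\ \gcd(e,\Nc)=1}} e^{1-2\sfr} = \sum_{\substack{d,e \geq 1 \\ \gcd(d,\Nc)=\gcd(e,\Nc)=1 \\ de \mid m}} \mu(d)\chi(d)\, d^{-\sfr} e^{1-2\sfr},
\end{align*}
where in the last step I use that the pair of conditions $d \mid m$ and $e \mid (m/d)$ is equivalent to the single condition $de \mid m$. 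After relabelling $(d,e)=(a,b)$, the summand $\mu(a)\chi(a)\, a^{-\sfr} b^{1-2\sfr}$ and the coprimality conditions match those on the left-hand side exactly; only the support conditions differ, $(ab)^2 \mid n$ on the left versus $ab \mid m$ here.

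The crux is therefore the elementary equivalence $(ab)^2 \mid n \iff ab \mid m$ for $n = tm^2$ with $t$ square-free. I would prove it prime by prime: writing $k = ab$, the condition $k^2 \mid tm^2$ reads $2\nu_p(k) \leq \nu_p(t) + 2\nu_p(m)$ for every prime $p$. Since $t$ is square-free we have $\nu_p(t) \in \{0,1\}$, so the right-hand side equals either $2\nu_p(m)$ or $2\nu_p(m)+1$; because the left-hand side $2\nu_p(k)$ is even, the inequality is in both cases equivalent to $\nu_p(k) \leq \nu_p(m)$. Ranging over all $p$ yields $k \mid m$, which is the assertion. The coprimality to $\Nc$ plays no role in this equivalence and is simply carried along.

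Using this to replace the constraint $(ab)^2 \mid n$ by $ab \mid m$ on the left-hand side turns it into precisely the double sum displayed above, which completes the proof. I expect no serious obstacle: the only mild subtlety is the bookkeeping reconciling the nested divisor conditions $d \mid m$, $e \mid (m/d)$ arising from $T_{\Nc,\cdot}^{\chi}$ with the symmetric condition on the left, and this is resolved entirely by the square-free valuation argument. I would also record explicitly that the decomposition $n = tm^2$ is taken with $t$ square-free, matching the convention of Lemma \ref{lem:shimurasturm} from which these divisor-type sums originate.
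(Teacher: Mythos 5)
Your proof is correct and follows essentially the same route as the paper's: both arguments are a direct rearrangement of the double divisor sum, yours by unfolding $T_{\Nc,1-\sfr}^{\chi}(m)$ into a sum over pairs with $de\mid m$, the paper's by the substitution $c_1=ab$, $c_2=a$ in the left-hand side. Your version is in fact slightly more careful, since you explicitly verify the equivalence $(ab)^2\mid n \iff ab\mid m$ (which requires $t$ square-free, as in Lemma \ref{lem:shimurasturm}), a step the paper's proof uses silently when it replaces the condition $(ab)^2\mid n$ by $c_1\mid m$.
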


\begin{proof}
We substitute $c_1\coloneqq ab$, $c_2 \coloneqq a$ first and write $c_1 = c_2c_3$ subsequently, getting
\begin{align*}
\sum_{\substack{a,b \in \N \\ \gcd(a,\Nc)=1 \\ \gcd(b,\Nc)=1 \\ (ab)^2 \mid n}} \mu(a)\chi(a) a^{-\sfr}b^{1-2\sfr} &= \sum_{\substack{0 < c_1 \mid m \\ \gcd(c_1,\Nc)=1}} c_1^{1-2\sfr} \sum_{\substack{0 < c_2 \mid c_1 \\ \gcd(c_2,\Nc)=1}} \mu(c_2)\chi(c_2) c_2^{\sfr-1} \\
&= \sum_{\substack{0 < c_2 \mid m \\ \gcd(c_2,\Nc)=1}} \mu(c_2)\chi(c_2) c_2^{\sfr-1} \sum_{\substack{c_2 \mid c_1 \mid m \\ \gcd(c_1,\Nc)=1}} c_1^{1-2\sfr} = T_{\Nc,1-\sfr}^{\chi}(m). \qedhere
\end{align*}
\end{proof}

\subsection{Application to our plus space Kloosterman zeta function in weight \texorpdfstring{$1/2$}{1/2} and at \texorpdfstring{$s=3/4$}{s=3/4}}

In weight $\frac{1}{2}$, we are interested in the specialization at the spectral point $\frac{3}{4}$. Note that we evaluated the plus space Kloosterman zeta function at $\sfr+\frac{1}{2}$ so far, while our Fourier expansion from Proposition \ref{prop:Eisensteinprojected} evaluates it at $2s$. Hence, the connection between Proposition \ref{prop:Eisensteinprojected} and the results from Subsection \ref{subsec:Kloostermanzetageneral} is given by
\begin{align*}
\sfr = 2s-\frac{1}{2}, \qquad s = \frac{\sfr}{2} + \frac{1}{4},
\end{align*}
which imply that we are interested in the case $\sfr = 1$. We begin by employing Lemmas \ref{lem:shimurasturm}, \ref{lem:maasseval1} to $\Ks_{k,4N}^+$ from \eqref{eq:Kloostermanzetadefinition}.
\begin{prop} \label{prop:Kloostermanzetageneral}
Write $n = tm^2$ with $t$ square-free.
\begin{enumerate}[label={\rm (\roman*)}]
\item We have
\begin{align*}
\hspace*{\leftmargini} \Ks_{\frac{1}{2},4N}^+\left(0,0;\sfr+\frac{1}{2}\right)
= \frac{L_{4N}\left(2\sfr-1,\id\right)}{L_{4N}\left(2\sfr,\id\right)} \left(\sum_{j\geq2} \frac{a\left(2^j,0\right) }{2^{j\left(\sfr+\frac{1}{2}\right)}} + \frac{a\left(4,0\right)}{2^{2\left(\sfr+\frac{1}{2}\right)}}\right) \prod_{\substack{p \text{ prime} \\ p \mid N}} \sum_{j\geq1} \frac{a\left(p^j,0\right)}{p^{j\left(\sfr+\frac{1}{2}\right)}}.
\end{align*}
\item If $n \neq 0$, then
\begin{multline*}
\hspace*{\leftmargini} \Ks_{\frac{1}{2},4N}^+\left(0,n;\sfr+\frac{1}{2}\right) \\
= \frac{L_{4N} (\sfr, \chi_t)}{L_{4N} (2\sfr, \id)} T_{4N,1-\sfr}^{\chi_t}(m) \left(  \sum_{j \geq 2} \frac{ a(2^j,n)}{2^{j(\sfr + \frac{1}{2})}} + \frac{ a(4,n)}{2^{2(\sfr + \frac{1}{2})}} \right) \prod_{\substack{p \text{ prime} \\ p \mid N}} \sum_{j \geq 1} \frac{a(p^j,n)}{p^{j(\sfr + \frac{1}{2})}} .
\end{multline*}
\end{enumerate}
\end{prop}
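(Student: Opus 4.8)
The plan is to recognize $\Ks_{\frac{1}{2}}(0,n;\sfr+\frac{1}{2})$ as essentially an instance of Shimura's twisted Gauss-sum Dirichlet series from Lemma \ref{lem:shimurasturm}, applied with the parameters $\Nc = 4N$, $L = 1$ (so that $\lambda = \frac{1}{2}$ and $\varepsilon_r^L = \varepsilon_r$, matching $\varepsilon_r^{2k}$ at $k=\frac{1}{2}$), and $\omega$ the principal character modulo $4N$. From the second line of the definition \eqref{eq:Kloostermanzetadefinition}, setting $m=0$ kills the $r^*$-term, so the inner sum over $r \pmod*{4Nc}$ becomes exactly $\sum_r \left(\frac{4Nc}{r}\right)\varepsilon_r e^{2\pi i nr/(4Nc)}$, which is the shape appearing in Lemma \ref{lem:shimurasturm}. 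Note that the denominator $(4Nc)^{\sfr+\frac{1}{2}}$ produced after absorbing the $\frac{1}{c}$-normalization of $K_{\frac{1}{2}}$ matches the exponent $\sfr+\lambda$ in that lemma. The only genuinely new feature compared to the classical level-$4$ computation is the weight $1+\left(\frac{4}{c}\right)$, which equals $2$ for odd $c$ and $1$ for even $c$; I will first evaluate the unweighted sum over all $c$ and then account for the extra copy of the odd-$c$ part at the end.

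For the unweighted sum, I would invoke the factorization in Lemma \ref{lem:shimurasturm}(iii), writing it as $b(n,\sfr,\omega^*)\,c(n,\sfr,\omega)$ with $\omega^* = \left(\frac{4N}{\cdot}\right)\omega$. The ``ramified'' factor $c(n,\sfr,\omega)$ is handled by Lemma \ref{lem:maasseval1}(i): since $\omega$ is principal modulo $4N$ and $L=1$, it becomes the Euler product $\prod_{p \mid 4N}\sum_{j \geq \nu_p(4N)} a(p^j,n)/p^{j(\sfr+\frac{1}{2})}$, i.e. a factor $\sum_{j\geq 2} a(2^j,n)/2^{j(\sfr+\frac{1}{2})}$ at the prime $2$ (as $\nu_2(4N)=2$) times $\prod_{p\mid N}\sum_{j\geq 1} a(p^j,n)/p^{j(\sfr+\frac{1}{2})}$.

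The ``unramified'' factor $b(n,\sfr,\omega^*)$ is where the two cases diverge. For $n\neq 0$ I would apply Lemma \ref{lem:shimurasturm}(i) after substituting $\omega^*$ for $\omega$ and carry out the character bookkeeping: with $L=1$ one has $(\omega^*)^2 = \id$ on integers prime to $4N$, hence the denominator is $L_{4N}(2\sfr,\id)$, while $(\omega^*)_n = \left(\frac{t\cdot 4N}{\cdot}\right)\left(\frac{4N}{\cdot}\right)\omega = \chi_t$ (again up to the principal character), giving the numerator $L_{4N}(\sfr,\chi_t)$. The residual sum $\sum_{(ab)^2\mid n,\,\gcd(ab,4N)=1}\mu(a)\chi_t(a)a^{-\sfr}b^{1-2\sfr}$ is then rewritten as $T_{4N,1-\sfr}^{\chi_t}(m)$ by Lemma \ref{lem:TdivisorSum}. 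For $n=0$ I would instead use Lemma \ref{lem:shimurasturm}(iv), which at $L=1$ directly produces the ratio $L_{4N}(2\sfr-1,\id)/L_{4N}(2\sfr,\id)$.

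It remains to reinstate the weight $1+\left(\frac{4}{c}\right)$. Because $N$ is odd, $c$ is odd precisely when $\nu_2(4Nc)=2$, i.e. exactly when the $2$-part $M$ in Shimura's decomposition satisfies $\nu_2(M)=2$; this is the single term $j=2$ in the local factor at $2$. Doubling the odd-$c$ contribution therefore adds one further copy of $a(4,n)/2^{2(\sfr+\frac{1}{2})}$ to that factor, turning $\sum_{j\geq 2}$ into $\sum_{j\geq 2}(\cdots) + a(4,n)/2^{2(\sfr+\frac{1}{2})}$ and leaving every other factor unchanged. Assembling $b$ and the modified $c$ yields the two stated formulas. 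I expect the main obstacle to be the character and quadratic-reciprocity bookkeeping in the $b$-factor, namely verifying that $(\omega^*)^2$ and $(\omega^*)_n$ collapse to the principal character and to $\chi_t$ respectively on the coprime range, so that the incomplete $L$-functions $L_{4N}(\sfr,\chi_t)$ and $L_{4N}(2\sfr,\id)$ emerge cleanly, together with the careful localization of the odd-$c$ doubling at the prime $2$.
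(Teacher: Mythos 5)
Your proposal is correct and follows essentially the same route as the paper: apply Lemma \ref{lem:shimurasturm} with $\Nc=4N$, $L=1$ and $\omega$ principal, identify the $b$-factor via parts (i)/(iv) together with Lemma \ref{lem:TdivisorSum} and the character computation $(\omega^*)_n=\chi_{4N}^2\chi_t$, $(\omega^*)^2=\id$, and evaluate the $c$-factor as an Euler product via Lemma \ref{lem:maasseval1}(i). The only (harmless) deviation is in the odd-$c$ doubling: the paper writes $\sum_{c\,\mathrm{odd}}=\sum_{c}-\sum_{c\,\mathrm{even}}$ and re-applies the lemma at level $8N$, whereas you identify the odd-$c$ terms directly with the $\nu_2(M)=2$ slice of the $M$-sum; both yield the same extra term $a(4,n)/2^{2(\sfr+\frac{1}{2})}$.
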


\begin{proof}
\
\begin{enumerate}[label={\rm (\roman*)}]
\item Choosing $\Nc = 4N$, $\omega=\chi_{4N}^2$ to be the principal character of modulus $4N$ and $L=1$, we compute that
\begin{align} 
\omega^* &= \chi_{4N}\omega = \chi_{4N}^3 = \chi_{4N}, \qquad \left(\omega^*\right)_n = (\chi_{4N})_n = \chi_{4N}^2\chi_t, \label{eq:principalcharactertwist}
\end{align}
and note that
\begin{align} \label{eq:twistimpact}
\begin{split}
\hspace*{\leftmargini} & L_{4N}\left(\sfr,\chi_{4N}^2\chi_t\right) = \sum_{\substack{n \geq 1 \\ \gcd(n,4N)=1}} \frac{\chi_{4N}^2(n)\chi_t(n)}{n^s} = L_{4N}\left(\sfr,\chi_t\right), \\
&T_{4N,1-\sfr}^{\chi_{4N}^2\chi_t}(m) = \sum_{\substack{0 < d \mid m \\ \gcd(d,4N)=1}} \mu(d)\chi_{4N}^2(d)\chi_t(d)d^{\sfr-1}\sigma_{4N,2\sfr-1}\left(\frac{m}{d}\right) = T_{4N,1-\sfr}^{\chi_t}(m).
\end{split}
\end{align}
By virtue of Lemmas \ref{lem:shimurasturm} and \ref{lem:TdivisorSum} we infer that
\begin{multline*} 
\hspace*{\leftmargini} \sum_{c > 0} \frac{1}{(4Nc)^{\sfr+\frac{1}{2}}} \sum_{\substack{r \pmod*{4Nc} \\ \gcd(4Nc,r)=1}} \left(\frac{4Nc}{r}\right)\varepsilon_r e^{2\pi i \frac{nr}{4Nc}} \\
= \frac{L_{4N}\left(\sfr,\chi_t\right)}{L_{4N}(2\sfr,\id)} T_{4N,1-\sfr}^{\chi_t}(m) \sum_{(4N) \mid M \mid (4N)^{\infty}} \frac{1}{M^{\sfr+\frac{1}{2}}} \sum_{r=1}^M \varepsilon_r \left(\frac{M}{r}\right)e^{2\pi i \frac{nr}{M}}.
\end{multline*}
The sum running over odd $c > 0$ is evaluated by writing
\begin{align*}
\sum_{\substack{c > 0 \\ c \text{ odd}}} = \sum_{c > 0} - \sum_{\substack{c > 0 \\ c \text{ even}}},
\end{align*}
and mapping $c \mapsto 2c$ in the latter sum. The former sum is evaluated as above, while the latter sum can be evaluated by choosing $\Nc = 8N$, $\omega=\chi_{8N}^2$ and $L=1$ in Lemma \ref{lem:shimurasturm}. Since $N$ is odd, we obtain
\begin{multline*}
\hspace*{\leftmargini} \sum_{\substack{c > 0 \\ c \text{ odd}}} \frac{1}{(4Nc)^{\sfr+\frac{1}{2}}} \sum_{\substack{r \pmod*{4Nc} \\ \gcd(4Nc,r)=1}} \left(\frac{4Nc}{r}\right)\varepsilon_r e^{2\pi i \frac{nr}{4Nc}} \\
= \frac{L_{4N}\left(\sfr,\chi_t\right)}{L_{4N}(2\sfr,\id)} T_{4N,1-\sfr}^{\chi_t}(m) \left(\sum_{(4N) \mid M \mid (4N)^{\infty}} - \sum_{(8N) \mid M \mid (8N)^{\infty}}\right)
\frac{1}{M^{\sfr+\frac{1}{2}}} \sum_{r=1}^M \varepsilon_r \left(\frac{M}{r}\right)e^{2\pi i \frac{nr}{M}} \\
= \frac{L_{4N}\left(\sfr,\chi_t\right)}{L_{4N}(2\sfr,\id)} T_{4N,1-\sfr}^{\chi_t}(m) \sum_{\substack{(4N) \mid M \mid (4N)^{\infty} \\ \nu_2(M) = 2}} \frac{1}{M^{\sfr+\frac{1}{2}}} \sum_{r=1}^M \varepsilon_r \left(\frac{M}{r}\right)e^{2\pi i \frac{nr}{M}}.
\end{multline*}
Thus, the first assertion follows by Lemma \ref{lem:maasseval1}.

\item The second assertion is proven along the same lines as the first one. Additionally, Lemma \ref{lem:TdivisorSum} is used here. \qedhere
\end{enumerate}
\end{proof}

Next, we specialize Lemma \ref{lem:maasseval1} to $k=\frac{1}{2}$ and $\sfr = 1$. Consequently, we define
\begin{align*}
A(2,n) \coloneqq \sum_{j\geq2} \frac{a\left(2^{j},n\right) }{2^{\frac{3}{2}j}}, \qquad A(p,n) \coloneqq \sum_{j\geq1} \frac{a\left(p^{j},n\right) }{p^{\frac{3}{2}j}},
\end{align*}
and note that
\begin{align} \label{eq:localoddcontribution}
a(4,n) = \begin{cases}
1+i & \text{if } n \equiv 0,1 \pmod*{4}, \\
-1-i & \text{if } n \equiv 2,3 \pmod*{4}.
\end{cases}
\end{align}

Let $\nu_p(m) \in \N_0 \cup \{\infty\}$ be the $p$-adic valuation of $m \in \Z$.
\begin{prop} \label{prop:localfactors}
Let $n \neq 0$, $\sfr=1$, and $p$ be an odd prime.
\begin{enumerate}[label={\rm (\roman*)}]
\item We have
\begin{align*}
A(2,n) = \frac{1+i}{4} \cdot \begin{cases} 1-3\cdot 2^{-\frac{\nu_2(n)+1}{2}} & \text{if } 2 \nmid \nu_2(n), \\
 1-3\cdot 2^{-\frac{\nu_2(n)}{2}-1} & \text{if } 2 \mid \nu_2(n), \frac{n}{2^{\nu_2(n)}} \equiv 3 \pmod*{4}, \\
1 & \text{if } 2 \mid \nu_2(n), \frac{n}{2^{\nu_2(n)}} \equiv 1 \pmod*{8}, \\
1-2^{-\frac{\nu_2(n)}{2}} & \text{if } 2 \mid \nu_2(n), \frac{n}{2^{\nu_2(n)}} \equiv 5 \pmod*{8},
\end{cases}
\end{align*}
and
\begin{align*}
A(p,n) = \begin{cases}
\frac{1}{p}-(p+1)p^{-\frac{\nu_p(n)+3}{2}} & \text{if } 2 \nmid \nu_p(n), \\
\frac{1}{p} & \text{if } 2 \mid \nu_p(n), \left(\frac{\frac{n}{p^{\nu_p(n)}}}{p}\right) = 1, \\
\frac{1}{p} - 2p^{-\frac{\nu_p(n)}{2}-1} & \text{if } 2 \mid \nu_p(n), \left(\frac{\frac{n}{p^{\nu_p(n)}}}{p}\right) = -1.
\end{cases}
\end{align*}

\item In particular, we have 
\begin{align*} 
A\left(2,m^2\right) = \frac{1+i}{4}, \qquad A\left(p,m^2\right) = \frac{1}{p}
\end{align*}
for every $m \in \Z$.
\end{enumerate}
\end{prop}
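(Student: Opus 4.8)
\emph{Proof proposal.} The plan is to substitute the explicit evaluations of the Gauss sums $a(p^j,n)$ from Lemma~\ref{lem:maasseval1} directly into the defining Dirichlet series for $A(p,n)$ and $A(2,n)$, observe that almost every term vanishes, and reduce each sum to a short geometric progression together with one or two boundary terms. Throughout I abbreviate $\nu \coloneqq \nu_p(n)$ (resp.\ $\nu \coloneqq \nu_2(n)$) and recall $e^{\pi i/4} 2^{-3/2} = \frac{1+i}{4}$.

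I would handle the odd prime first, as it is cleaner. By Lemma~\ref{lem:maasseval1}(ii) the summand $a(p^j,n)$ vanishes unless $p^{j-1} \mid n$, i.e.\ $j \leq \nu+1$; among $j \leq \nu$ only the even indices survive, contributing $\phi(p^j) = p^{j-1}(p-1)$, while the single boundary index $j=\nu+1$ contributes $p^{\nu+\frac{1}{2}}\left(\frac{n/p^\nu}{p}\right)$ when $\nu+1$ is odd and $-p^\nu$ when $\nu+1$ is even. Inserting these into $A(p,n) = \sum_{j\geq 1} a(p^j,n) p^{-\frac{3}{2}j}$ and summing the geometric series $\sum_i \phi(p^{2i}) p^{-3i}$ over even $j = 2i$, I split according to the parity of $\nu$: for $\nu$ even the boundary term carries the Kronecker symbol, yielding the two subcases $\left(\frac{n/p^\nu}{p}\right) = \pm 1$, whereas for $\nu$ odd the boundary term $-p^\nu$ merges with the geometric tail to produce the factor $(p+1)p^{-\frac{\nu+3}{2}}$. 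This reproduces the three cases of the first display.

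For $p=2$ the same strategy applies but Lemma~\ref{lem:maasseval1}(ii) demands finer bookkeeping. The surviving indices are: the even $j$ with $2 \leq j \leq \nu$, each contributing $e^{\pi i/4} 2^{-j/2-3/2}$ since then $n/2^{j-2} \equiv 0 \pmod*{4}$; the boundary index $j=\nu+1$ when $\nu$ is odd (with sign $-1$, as then $n/2^{j-2} \equiv 2 \pmod*{4}$); the even index $j=\nu+2$ when $\nu$ is even (sign $+1$ or $-1$ according to whether $n/2^\nu \equiv 1$ or $3 \pmod*{4}$); and the odd index $j=\nu+3$ when $\nu$ is even, which is nonzero only if $n/2^\nu \equiv 1 \pmod*{4}$, its sign being dictated by the residue of $n/2^\nu$ modulo $8$ through $\delta$ and $e^{\pi i(u-1)/4}$. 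Summing the even-$j$ geometric progression and adjoining the relevant boundary terms then gives $\frac{1+i}{4}\left(1 - 3\cdot 2^{-(\nu+1)/2}\right)$ when $\nu$ is odd, and, when $\nu$ is even, $\frac{1+i}{4}$ times $1 - 3\cdot 2^{-\nu/2-1}$, $1$, or $1 - 2^{-\nu/2}$ according to whether $n/2^\nu$ is $3 \pmod*{4}$, $1 \pmod*{8}$, or $5 \pmod*{8}$; in the last two classes the $j=\nu+2$ and $j=\nu+3$ contributions reinforce or cancel so as to simplify the geometric tail.

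Finally, part (ii) is an immediate corollary of part (i). If $n=m^2$ then $\nu = 2\nu_p(m)$ is even and $n/p^\nu = \left(m/p^{\nu_p(m)}\right)^2$ is a perfect square coprime to $p$; for odd $p$ such a square is a quadratic residue, so $\left(\frac{n/p^\nu}{p}\right) = 1$ and the even-$\nu$ case gives $A(p,m^2) = \frac{1}{p}$, while for $p=2$ an odd perfect square lies in $1 \pmod*{8}$, placing us in the third case and giving $A(2,m^2) = \frac{1+i}{4}$. I expect the only genuine obstacle to be the $p=2$ case, specifically verifying that the boundary contributions at $j=\nu+1,\nu+2,\nu+3$ combine with the correct signs and powers of $2$; everything else amounts to summing finite geometric series.
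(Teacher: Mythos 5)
Your computation is correct and it takes a genuinely different route from the paper. The paper does not resum the series from scratch: it imports Maass's closed-form evaluations of the full local sums $\sum_{j\geq 0} a(p^j,n)p^{-j(\sfr+\frac{1}{2})}$ for a \emph{general} spectral parameter $\sfr$ (organized by the discriminant of $\Q(\sqrt{n})$), massages them into the truncated sums defining $A(2,n)$ and $A(p,n)$ while keeping $\sfr$ free (equations \eqref{eq:localfactor2s} and \eqref{eq:localfactorps}), and only then sets $\sfr=1$; it also records an alternative derivation via the functional equation \eqref{eq:Kloostermanfunctionalequation} and Pei's weight-$\frac{3}{2}$ formulas. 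You instead substitute the explicit values of $a(p^j,n)$ from Lemma \ref{lem:maasseval1}(ii) directly at $\sfr=1$, which is more elementary and self-contained; I checked your bookkeeping of the surviving indices (even $j\leq\nu$, plus the boundary terms at $j=\nu+1$ for odd $p$ and at $j=\nu+1,\nu+2,\nu+3$ for $p=2$) and the resulting geometric sums, and they reproduce all the stated cases. What your approach gives up is the general-$\sfr$ formulas \eqref{eq:localfactor2s}, \eqref{eq:localfactorps}, which the Proposition itself does not need but which the paper reuses later when differentiating $\ff(m^2,2s)$ in $s$ for the coefficients $\cf(0)$ and $\cf(m^2)$.

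One small omission: part (ii) is asserted for every $m\in\Z$, including $m=0$, where your reduction to part (i) breaks down because $\nu_p(0)=\infty$ and part (i) assumes $n\neq 0$. The paper handles this by letting $\nu_p(n)\to\infty$ in each arithmetic progression of part (i); in your setup it is equally quick to note that for $n=0$ only the even indices $j$ survive and contribute $\phi(p^j)$ (resp.\ $e^{\pi i/4}2^{j-\frac{3}{2}}$), so the full geometric series evaluates to $\frac{1}{p}$ (resp.\ $\frac{1+i}{4}$). You should add a sentence to that effect.
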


\begin{proof}
\
\begin{enumerate}[label={\rm (\roman*)}]
\item To prove the first assertion, we cite the following calculations by Maass \cite{maass1937}*{pp.\ 141, 142} to evaluate the sums from Lemma \ref{lem:maasseval1}. To this end, let $\Dc(n)$ be the discriminant of $\Q\left(\sqrt{n}\right)$ and recall \eqref{eq:multipliersumrewrite}. 

If $ p = 2$ then
\begin{multline*}
\hspace*{\leftmargini} \sum_{j\geq0} \frac{a\left(2^{j+1},n\right) }{2^{j\left(\sfr+\frac{1}{2}\right)}} = e^{\frac{\pi i}{4}} \times \begin{cases}
1 + \left(\frac{\Dc(n)}{2}\right)2^{-\sfr} & \text{if } 2 \nmid \Dc(n), \\
1 - 2^{-2\sfr} & \text{if } 2 \mid \Dc(n),
\end{cases} 
\\
\times \begin{cases}
\frac{1}{1+2^{-\sfr}}\Bigg(\sum\limits_{j=0}^{\frac{\nu_2(n)-1}{2}}2^{j(1-2\sfr)} + 2^{1-\sfr}\sum\limits_{j=0}^{\frac{\nu_2(n)-3}{2}}2^{j(1-2\sfr)} \Bigg) & \text{if } 2 \nmid \nu_2(n), \\
-\frac{2^\sfr}{1+2^{-\sfr}} + 2^\sfr\sum\limits_{j=0}^{\frac{\nu_2(n)}{2}} 2^{j(1-2\sfr)} & \text{if } 2 \mid \nu_2(n), \frac{n}{2^{\nu_2(n)}} \equiv 3 \pmod*{4}, \\
\left(1+2^{1-\sfr}\right)\sum\limits_{j=0}^{\frac{\nu_2(n)}{2}} 2^{j(1-2\sfr)} & \text{if } 2 \mid \nu_2(n), \frac{n}{2^{\nu_2(n)}} \equiv 5 \pmod*{8}, \\
\frac{2}{1+2^{-\sfr}} + \left(2^{1-\sfr}-1\right)\sum\limits_{j=0}^{\frac{\nu_2(n)}{2}} 2^{j(1-2\sfr)} & \text{if } 2 \mid \nu_2(n), \frac{n}{2^{\nu_2(n)}} \equiv 1 \pmod*{8}.
\end{cases}
\end{multline*}
If $ p > 2$ then
\begin{multline*}
\hspace*{\leftmargini} \sum_{j\geq0} \frac{a\left(p^j,n\right) }{p^{j\left(\sfr+\frac{1}{2}\right)}} = \\
\hspace*{\leftmargini} \begin{cases} \left(1-p^{-2\sfr}\right) \sum\limits_{j=0}^{\frac{\nu_p(n)-1}{2}} p^{j(1-2\sfr)} & \text{if } p \mid \Dc(n), \\
\left(1+\left(\frac{\Dc(n)}{p}\right)p^{-\sfr}\right) \Bigg(p^{(1-2\sfr)\frac{\nu_p(n)}{2}} + \left(1-\left(\frac{\Dc(n)}{p}\right)p^{-\sfr}\right)\sum\limits_{j=0}^{\frac{\nu_p(n)}{2}-1} p^{j(1-2\sfr)}\Bigg) & \text{if } p \nmid \Dc(n).
\end{cases}
\end{multline*}
After some calculations, we obtain
\begin{align} \label{eq:localfactor2s}
\begin{split}
\hspace*{\leftmargini}& \sum_{j\geq2} \frac{a\left(2^{j},n\right) }{2^{j\left(\sfr+\frac{1}{2}\right)}}
= \frac{1+i}{2^{\sfr+1}} \\
&\times \begin{cases} \frac{2^{-\nu_2(n)\sfr}\left(2^{(\nu_2(n)+1)\sfr}-2^{\frac{\nu_2(n)+1}{2}}\left(2^{2\sfr}-1\right)\right)}{2^{2\sfr}-2} & \text{if } 2 \nmid \nu_2(n), \\
\frac{2^{-(\nu_2(n)+1)\sfr}\left(2^{(\nu_2(n)+2)\sfr} - 2^{\frac{\nu_2(n)}{2}+2\sfr+1} + 2^{\frac{\nu_2(n)}{2}+1}\right)}{2^{2\sfr}-2} & \text{if } 2 \mid \nu_2(n), \frac{n}{2^{\nu_2(n)}} \equiv 3 \pmod*{4}, \\
\frac{2^{-(\nu_2(n)+2)\sfr}\left(2^{\frac{\nu_2(n)}{2}+1}\left(2^\sfr-2\right)\left(2^\sfr+1\right)+2^{(\nu_2(n)+3)\sfr}\right)}{2^{2\sfr}-2} & \text{if } 2 \mid \nu_2(n), \frac{n}{2^{\nu_2(n)}} \equiv 1 \pmod*{8}, \\
\frac{2^{-(\nu_2(n)+2)\sfr}\left(-2^{\frac{\nu_2(n)}{2}+1}\left(2^\sfr+2^{2\sfr}-2\right)+2^{(\nu_2(n)+3)\sfr}\right)}{2^{2\sfr}-2} & \text{if } 2 \mid \nu_2(n), \frac{n}{2^{\nu_2(n)}} \equiv 5 \pmod*{8},
\end{cases}
\end{split}
\end{align}
and
\begin{align} \label{eq:localfactorps}
\begin{split}
\hspace*{\leftmargini}& 1 + \sum_{j\geq1} \frac{a\left(p^{j},n\right) }{p^{j\left(\sfr+\frac{1}{2}\right)}} = \\
& \begin{cases}
\left(1-p^{-2\sfr}\right)\frac{p^{\sfr(1-\nu_p(n))+\frac{\nu_p(n)+1}{2}}-p^{2\sfr}}{p-p^{2\sfr}} & \text{if } 2 \nmid \nu_p(n), \\
\frac{-p^{\frac{\nu_p(n)}{2}-\nu_p(n)\sfr+2\sfr}+p^{\frac{\nu_p(n)}{2}-\nu_p(n)\sfr}\left(p^{2\sfr} - p^{1-\sfr} + p^\sfr - p + 1\right) + p^{2\sfr} - 1}{p^{2\sfr} - p} & \text{if } 2 \mid \nu_p(n), \left(\frac{\frac{n}{p^{\nu_p(n)}}}{p}\right) = 1, \\
\frac{-p^{\frac{\nu_p(n)}{2}-\nu_p(n)\sfr + 2\sfr} + p^{\frac{\nu_p(n)}{2}-\nu_p(n)\sfr} \left(p^{2\sfr} + p^{1-\sfr} - p^\sfr - p + 1\right)+ p^{2\sfr} - 1}{p^{2\sfr} - p} & \text{if } 2 \mid \nu_p(n), \left(\frac{\frac{n}{p^{\nu_p(n)}}}{p}\right) = -1
\end{cases}
\end{split}
\end{align}
for $p > 2$. The first assertion follows by specializing this to $\sfr = 1$. Alternatively, we combine \eqref{eq:Kloostermanfunctionalequation} (or Lemma \ref{lem:peiwangconnection} (i) below) with Pei \cite{peiI}*{(2.17), (2.18)}. 

\item If $m \neq 0$, the claim is a direct consequence of the first assertion, since $\nu_p\left(m^2\right)$ is even and both $3$ resp.\ $5$ are quadratic non-residues modulo $4$ resp.\ $8$. If $m=0$, we let $\nu_p(n) \to \infty$ in each arithmetic progression in the first assertion. Alternatively, we combine \eqref{eq:Kloostermanfunctionalequation} (or Lemma \ref{lem:peiwangconnection} (i) below) with Pei \cite{peiI}*{(2.19)}. \qedhere
\end{enumerate}
\end{proof}

Lastly, we evaluate the result of Lemma \ref{lem:TdivisorSum} at $\sfr=1$.
\begin{lemma} \label{lem:TdivisorSpecial}
Let $p$ be a prime and $\nu_p(m) \geq 1$. Then, we have
\begin{align*}
T_{4N,0}^{\chi}\left(p^{\nu_p(m)}\right) = 
\begin{cases}
1 + (1-\chi(p)) \sum\limits_{j=1}^{\nu_p(m)} p^{-j} & \text{if } \gcd(p, 4N) = 1, \\
1 & \text{if } \gcd(p, 4N) = p.
\end{cases}
\end{align*}
In particular, we have $T_{4N,0}^{\chi_{4N}^2}(m)=1$.
\end{lemma}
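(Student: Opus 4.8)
The plan is to unwind the definition of $T_{\Nc,\sfr}^{\chi}$ with the parameter specialized to $0$ (this is the value $1-\sfr$ at $\sfr=1$ produced by Lemma~\ref{lem:TdivisorSum}), so that
\[
T_{4N,0}^{\chi}(n) = \sum_{\substack{0<d\mid n\\ \gcd(d,4N)=1}} \mu(d)\chi(d)\,d^{-1}\,\sigma_{4N,-1}\!\left(\tfrac{n}{d}\right), \qquad \sigma_{4N,-1}(r) = \sum_{\substack{0<e\mid r\\ \gcd(e,4N)=1}} e^{-1}.
\]
First I would record that $n\mapsto T_{4N,0}^{\chi}(n)$ is multiplicative, being the Dirichlet convolution of the two multiplicative functions $d\mapsto \mu(d)\chi(d)d^{-1}\mathbf{1}_{\gcd(d,4N)=1}$ and $\sigma_{4N,-1}$. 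This reduces both the displayed identity and the final ``in particular'' assertion to the evaluation on a single prime power $p^{a}$ with $a=\nu_p(m)\geq1$.

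For such a prime power I would split into the two cases of the statement. If $\gcd(p,4N)=1$, then every divisor of $p^{a}$ is automatically coprime to $4N$, the M\"obius factor forces the outer index to $d\in\{1,p\}$, and $\sigma_{4N,-1}(p^{k})=\sum_{i=0}^{k}p^{-i}$. Substituting gives
\[
T_{4N,0}^{\chi}(p^{a}) = \sigma_{4N,-1}(p^{a}) - \chi(p)\,p^{-1}\sigma_{4N,-1}(p^{a-1}) = \Big(1+\sum_{j=1}^{a}p^{-j}\Big) - \chi(p)\sum_{j=1}^{a}p^{-j},
\]
where the reindexing $p^{-1}\sum_{i=0}^{a-1}p^{-i}=\sum_{j=1}^{a}p^{-j}$ lines up the two geometric sums; collecting terms yields $1+(1-\chi(p))\sum_{j=1}^{a}p^{-j}$, as claimed. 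If instead $\gcd(p,4N)=p$, then the only divisor of $p^{a}$ coprime to $4N$ is $1$, so the outer sum collapses to its $d=1$ term and $\sigma_{4N,-1}(p^{a})$ collapses to its $e=1$ term, leaving $T_{4N,0}^{\chi}(p^{a})=1$.

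Finally, for the special case I would take $\chi=\chi_{4N}^{2}$, which is the principal character modulo $4N$ since $\big(\tfrac{4N}{\cdot}\big)^{2}$ equals $1$ on integers coprime to $4N$ and $0$ otherwise. When $p\nmid 4N$ we have $\chi_{4N}^{2}(p)=1$, so the correction term $(1-\chi(p))\sum_{j}p^{-j}$ vanishes and the local factor is $1$; when $p\mid 4N$ the local factor is $1$ by the second case. Multiplicativity then gives $T_{4N,0}^{\chi_{4N}^{2}}(m)=\prod_{p\mid m}1=1$. The computation has no genuine obstacle; the only points needing care are keeping the two distinct coprimality conditions straight (the one on $d$ in the outer sum versus the one on $e$ inside $\sigma_{4N,-1}$) and performing the index shift that exposes the cancellation between the two geometric series.
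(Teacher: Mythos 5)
Your proposal is correct and follows the same route as the paper: both establish multiplicativity of $T_{4N,\sfr}^{\chi}$ as a Dirichlet convolution of multiplicative functions and then evaluate on prime powers, with your version simply writing out the geometric-series computation that the paper leaves implicit. No gaps.
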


\begin{proof}
Note that $T_{4N,\sfr}^{\chi}(m)$ is a multiplicative function of $m$, because the Dirichlet convolution of multiplicative functions is multiplicative, and the $\gcd$ is multiplicative with one of its inputs fixed. The first claim follows by evaluating the functions involved. The second claim is an immediate consequence of the first one.
\end{proof}

\subsection{The residue of \texorpdfstring{$\Ks_{\frac{1}{2},4N}^+$}{the plus space Kloosterman zeta function} at \texorpdfstring{$s=\frac{3}{4}$}{s = 3/4}} \label{subsec:Klostermanzetaresidues}

We compute the residue of $\Ks_{\frac{1}{2},4N}^+\left(0,n;2s\right)$ at $s=\frac{3}{4}$. To this end, we abbreviate
\begin{align} \label{eq:resiudemultiple}
\rf(N) \coloneqq \frac{1}{2\pi} \prod_{\substack{p \text{ prime} \\ p \mid N}} \frac{1}{p+1}.
\end{align}

\begin{prop} \label{prop:Kloostermanzetaresidue}
We have
\begin{align*}
\lim_{s \to \frac{3}{4}^+} \left(s-\frac{3}{4}\right) \Ks_{\frac{1}{2},4N}^+\left(0,n;2s\right) = \begin{cases}
\frac{3}{2\pi}(1+i) \rf(N) & \text{if } n=m^2 \geq 1, \\
\frac{3}{4\pi}(1+i) \rf(N) & \text{if } n=0, \\
0 & \text{otherwise}.
\end{cases}
\end{align*}
\end{prop}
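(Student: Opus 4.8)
The plan is to read the residue off directly from the closed formulas for $\Ks_{\frac{1}{2}}(0,0;\sfr+\frac{1}{2})$ and $\Ks_{\frac{1}{2}}(0,n;\sfr+\frac{1}{2})$ supplied by Proposition~\ref{prop:Kloostermanzetageneral}, after passing to the variable $\sfr=2s-\frac{1}{2}$. Under this substitution $s=\frac{3}{4}$ corresponds to $\sfr=1$ and $s-\frac{3}{4}=\frac{\sfr-1}{2}$, whence
\[
\lim_{s\to\frac{3}{4}^+}\Big(s-\tfrac{3}{4}\Big)\Ks_{\frac{1}{2}}(0,n;2s)=\tfrac{1}{2}\operatorname*{Res}_{\sfr=1}\Ks_{\frac{1}{2}}\Big(0,n;\sfr+\tfrac{1}{2}\Big).
\]
The decisive first step is to locate the unique source of a pole at $\sfr=1$. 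In both formulas every factor other than the leading incomplete $L$-value is holomorphic and nonzero there: the local factors $A(2,n)$ and $A(p,n)$ are finite (the denominators $2^{2\sfr}-2$ and $p^{2\sfr}-p$ appearing in Proposition~\ref{prop:localfactors} vanish only at $\sfr=\frac{1}{2}$), the divisor sum $T_{4N,1-\sfr}^{\chi_t}(m)$ is a finite Dirichlet series, and $L_{4N}(2\sfr,\id)=\zeta(2\sfr)\prod_{p\mid 4N}(1-p^{-2\sfr})$ is finite and nonzero near $\sfr=1$. Thus a pole can only arise from $L_{4N}(2\sfr-1,\id)$ in the case $n=0$ and from $L_{4N}(\sfr,\chi_t)$ in the case $n=tm^2\neq0$. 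Since $\zeta$ is the only Dirichlet $L$-function with a pole, $L_{4N}(\sfr,\chi_t)$ is regular at $\sfr=1$ unless $\chi_t$ is principal, i.e.\ unless $t=1$, i.e.\ unless $n=m^2$ is a positive perfect square. This settles the ``otherwise'' case immediately.

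For the two surviving cases I would compute the residue of the offending $L$-value from $\operatorname*{Res}_{w=1}\zeta(w)=1$ together with the truncated Euler product $\prod_{p\mid 4N}(1-p^{-1})=\frac{1}{2}\prod_{p\mid N}(1-\frac{1}{p})$. Because the argument is $2\sfr-1$ for $n=0$ but $\sfr$ for $n=m^2$, the chain rule contributes an extra factor $\frac{1}{2}$ in the first case, giving $\operatorname*{Res}_{\sfr=1}L_{4N}(2\sfr-1,\id)=\frac{1}{4}\prod_{p\mid N}(1-\frac{1}{p})$ versus $\operatorname*{Res}_{\sfr=1}L_{4N}(\sfr,\id)=\frac{1}{2}\prod_{p\mid N}(1-\frac{1}{p})$; this factor of two is precisely the origin of the $\frac{3}{2}$ versus $\frac{3}{4}$ discrepancy in the claim. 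I then evaluate the remaining factors at $\sfr=1$ using the results already in place: by Proposition~\ref{prop:localfactors}(ii) one has $A(2,m^2)=\frac{1+i}{4}$ and $A(p,m^2)=\frac{1}{p}$, and by \eqref{eq:localoddcontribution} one has $a(4,n)=1+i$ for $n\equiv0,1\pmod{4}$ (which covers $n=0$ and all squares), so the combined prime-$2$ bracket collapses to $\frac{1+i}{4}+\frac{1+i}{2^3}=\frac{3}{8}(1+i)$; moreover $\prod_{p\mid N}A(p,m^2)=\frac{1}{N}$ and Lemma~\ref{lem:TdivisorSpecial} gives $T_{4N,0}^{\chi_1}(m)=1$, while the denominator evaluates to $L_{4N}(2,\id)=\zeta(2)\cdot\frac{3}{4}\prod_{p\mid N}(1-\frac{1}{p^2})$.

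Finally I would assemble the pieces and simplify. Using $\frac{1-1/p}{1-1/p^2}=\frac{p}{p+1}$ together with $\prod_{p\mid N}\frac{p}{p+1}\cdot\frac{1}{N}=\prod_{p\mid N}\frac{1}{p+1}=2\pi\,\rf(N)$ (by \eqref{eq:resiudemultiple}), the products over the primes dividing $N$ telescope and the powers of $\pi$ coming from $\zeta(2)=\frac{\pi^2}{6}$ cancel against the $2\pi$, leaving $\operatorname*{Res}_{\sfr=1}\Ks_{\frac{1}{2}}(0,0;\sfr+\frac{1}{2})=\frac{3}{2\pi}(1+i)\rf(N)$ and $\operatorname*{Res}_{\sfr=1}\Ks_{\frac{1}{2}}(0,m^2;\sfr+\frac{1}{2})=\frac{3}{\pi}(1+i)\rf(N)$; multiplying by the prefactor $\frac{1}{2}$ from $s-\frac{3}{4}=\frac{\sfr-1}{2}$ yields the asserted $\frac{3}{4\pi}(1+i)\rf(N)$ and $\frac{3}{2\pi}(1+i)\rf(N)$. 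The only genuine obstacle is the bookkeeping of the Euler factors at the primes dividing $4N$ — in particular correctly splitting off the prime $2$ in each incomplete $L$-value and correctly reassembling the weight-$\frac{1}{2}$ local factor at $2$, which mixes the full-modulus contribution $A(2,n)$ with the extra $j=2$ term $a(4,n)2^{-2(\sfr+\frac{1}{2})}$ produced by the sum over odd $c$ in \eqref{eq:Kloostermanzetadefinition}; everything else is routine substitution into Propositions~\ref{prop:Kloostermanzetageneral} and~\ref{prop:localfactors} and Lemma~\ref{lem:TdivisorSpecial}.
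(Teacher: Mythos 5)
Your proposal is correct and follows essentially the same route as the paper: both read the residue off the closed formulas of Proposition \ref{prop:Kloostermanzetageneral}, locate the pole in the incomplete $L$-function (principal character forcing $t=1$, hence $n$ a square or zero), evaluate the local factors at $2$ and at $p \mid N$ via Proposition \ref{prop:localfactors} and equation \eqref{eq:localoddcontribution} together with $T_{4N,0}^{\chi_{4N}^2}(m)=1$ from Lemma \ref{lem:TdivisorSpecial}, and trace the factor-of-two discrepancy between the $n=0$ and $n=m^2$ cases to the different arguments $2\sfr-1$ versus $\sfr$ of the $L$-function. The only cosmetic difference is that you compute the residue in the variable $\sfr$ and convert by the Jacobian factor $\tfrac12$, whereas the paper works directly in $s$; your explicit treatment of the ``otherwise'' case is a small bonus, as the paper leaves it implicit.
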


\begin{proof}
If $n = 0$, we utilize Propositions \ref{prop:Kloostermanzetageneral} and obtain
\begin{multline*}
\lim_{s \to \frac{3}{4}^+} \left( s - \frac{3}{4} \right) \Ks_{\frac{1}{2},4N}^+(0,0;2s) = \lim_{s \to \frac{3}{4}^+} \left(s - \frac{3}{4} \right)  \frac{L_{4N}\left(2 \left(2s-\frac{1}{2}\right)-1,\id\right)}{L_{4N}\left(2 \left(2s-\frac{1}{2}\right),\id\right)} \\
\times \left(\sum_{j\geq2} \frac{a\left(2^j,0\right) }{2^{j\left(2s-\frac{1}{2}+\frac{1}{2}\right)}} + \frac{a\left(4,0\right)}{2^{2\left(2s-\frac{1}{2}+\frac{1}{2}\right)}}\right)  \prod_{\substack{p \text{ prime} \\ p \mid N}} \sum_{j\geq1} \frac{a\left(p^j,0\right)}{p^{j\left(2s-\frac{1}{2}+\frac{1}{2}\right)}}.
\end{multline*}
According to Proposition \ref{prop:localfactors} and \eqref{eq:localoddcontribution}, we infer that
\begin{align*}
&\lim_{s \to \frac{3}{4}^+} \left( s - \frac{3}{4} \right) \Ks_{\frac{1}{2},4N}^+(0,0;2s) \\
&= \frac{1}{L_{4N}\left(2,\id\right)} \cdot \frac{1}{N} \cdot \left( \frac{1 + i}{4} + \frac{(1+i)}{8} \right) \lim_{s \to \frac{3}{4}^+} \frac{1}{4} (4s - 2 -1) L_{4N}\left(4s-2,\id\right) \\
&= \frac{6}{\pi^2} \cdot \frac{1}{N} \cdot \frac{3(1+i)}{8} \cdot \prod_{\substack{p \text{ prime} \\ p \mid 4N}} \frac{p}{p+1} \cdot \lim_{s \to \frac{3}{4}^+} \frac{1}{4} (4s - 2 -1) \zeta(4s -2)
= \frac{3(1+i)}{4 \pi} \rf(N).
\end{align*}

For $n = m^2 \geq 1$, the computation is similar, since $T_{4N,0}^{\chi_{4N}^2}(m)=1$ by Lemma \ref{lem:TdivisorSpecial} (ii) and the evaluation of the local factors is the same by comparing expressions in Propositions \ref{prop:Kloostermanzetageneral}, \ref{prop:localfactors}. Hence, the only difference to the previous computation is that we have a factor of
\begin{align*}
\lim_{s \to \frac{3}{4}^+}  \left(s - \frac{3}{4}\right) \zeta \left(2s-\frac{1}{2}\right) = \frac{1}{2}
\end{align*}
instead of $\frac{1}{4}$. This proves the claimed evaluations.
\end{proof}

\subsection{The constant term in the Laurent expansion of \texorpdfstring{$\Ks_{\frac{1}{2},4N}^+$}{the plus space Kloosterman zeta function} at \texorpdfstring{$s=\frac{3}{4}$}{s = 3/4}} \label{subsec:analyticcontinuation}
Let $\gamma$ be the Euler--Mascheroni constant. We compute the Fourier expansion of $\Gc$ in terms of
\begin{align} \label{eq:cfdef}
\cf\left(n\right) \coloneqq \frac{\partial}{\partial s} \left[ \left(s-\frac{3}{4}\right) \Ks_{\frac{1}{2},4N}^+(0,n;2s) \right] \Bigg\vert_{s=\frac{3}{4}}.
\end{align}
We observe that the function $\cf$ is well-defined for every $n \in \Z$. If $n \geq 0$ is a square, we recall that
$
\frac{\partial}{\partial s} \left[ \left(s-1\right) \zeta(s) \right] \Big\vert_{s=1} = \gamma,
$
see \cite{nist}*{item 25.2.4} for example. This implies that $\cf(n)$ exists by virtue of Proposition \ref{prop:Kloostermanzetageneral} and \eqref{eq:L_Ndef}. If $n \geq 0$ is not a square or if $n < 0$ (with $n \equiv 0,1 \pmod*{4}$ in both cases) then $\Ks_{\frac{1}{2},4N}^+(0,n;2s)$ is regular at $s=\frac{3}{4}$, because the involved Dirichlet $L$-functions are. In other words, we have
\begin{align} \label{eq:KloostermanZetaResidueRegular}
\cf(n) = \Ks_{\frac{1}{2},4N}^+\left(0,n;\frac{3}{2}\right), \text{ whenever } n < 0 \text{ or } 0 \leq n \neq \square.
\end{align}
An explicit formula for $\cf(n)$ can be obtained by virtue of Lemma \ref{lem:TdivisorSum} and Propositions \ref{prop:Kloostermanzetageneral}, \ref{prop:localfactors}. If $n = m^2 \geq 0$ with $n \equiv 0,1 \pmod*{4}$, we utilize the same results and \eqref{eq:localoddcontribution}, which motivates to define
\begin{align*}
\ff\left(m^2,s\right) \coloneqq \left(\sum_{j\geq2} \frac{a\left(2^j,m^2\right)}{2^{js}} + \frac{1+i}{2^{2s}}\right) \prod_{\substack{p \text{ prime} \\ p \mid N}} \sum_{j\geq1} \frac{a\left(p^j,m^2\right)}{p^{js}} 
\cdot \begin{cases}
1 & \text{if } m = 0, \\
T_{4N,\frac{3}{2}-s}^{\id}(m) & \text{if } m \geq 1.
\end{cases}
\end{align*}
We refer to Proposition \ref{prop:localfactors} and \eqref{eq:localfactor2s}, \eqref{eq:localfactorps} for an evaluation of $\ff\left(m^2,s\right)$. In terms of $\ff\left(m^2,2s\right)$, we arrive at
\begin{multline} \label{eq:KloostermanZetaResidue0}
\cf(0) = \left(\frac{\partial}{\partial s} \left(s-\frac{3}{4}\right) \frac{L_{4N}(4s-2,\id)}{L_{4N}(4s-1, \id)}\right) \Bigg\vert_{s=\frac{3}{4}} \ff\left(0,\frac{3}{2}\right) \\
+ \left(\lim_{s \to \frac{3}{4}^+} \left(s-\frac{3}{4}\right) \frac{L_{4N}(4s-2,\id)}{L_{4N}(4s-1, \id)}\right) \frac{\partial}{\partial s} \ff(0,2s) \Bigg\vert_{s=\frac{3}{4}}
\end{multline}
and at
\begin{multline} \label{eq:KloostermanZetaResidueSquare}
\cf\left(m^2\right) = \left(\frac{\partial}{\partial s} \left(s-\frac{3}{4}\right) \frac{L_{4N}\left(2s-\frac{1}{2},\id\right)}{L_{4N}(4s-1, \id)}\right) \Bigg\vert_{s=\frac{3}{4}} \ff\left(m^2,\frac{3}{2}\right) \\
+ \left(\lim_{s \to \frac{3}{4}^+} \left(s-\frac{3}{4}\right) \frac{L_{4N}\left(2s-\frac{1}{2},\id\right)}{L_{4N}(4s-1, \id)}\right) \frac{\partial}{\partial s} \ff\left(m^2,2s\right) \Bigg\vert_{s=\frac{3}{4}}, \qquad m \geq 1.
\end{multline}
If $N=p$ is an odd prime, we evaluate both $\cf(0)$ and $\cf\left(m^2\right)$ in part II \cite{bemo2}*{Theorem 1.3}.

\section{Proof of Theorem \ref{thm:maingeneral} \texorpdfstring{$\mathrm{(i)}$}{(i)} and \texorpdfstring{$\mathrm{(ii)}$}{(ii)}} \label{sec:proofof1.3part1and2}

Since the Dirichlet $L$-functions involved have at most a simple pole at $\sfr=1$, we have the formal Laurent expansion
\begin{align} \label{eq:laurentformal}
\Fc_{\frac{1}{2},4N}^+(\tau,s) = \sum_{j \geq -1} f_j(\tau)\left(s-\frac{3}{4}\right)^j
\end{align}
about $s = \frac{3}{4}$. The functions $f_j$ are polyharmonic Maass forms. Roughly speaking, such forms are an inductive generalization of sesquiharmonic Maass forms to a vanishing condition under multiple applications of $\xi_k$. This statement can be found in \cite{anlagrho}*{middle of p.\ 43}, which in turn generalizes \cite{lagrho}*{Theorem 8.4} to half integral weights. See \cite{brika20}*{Lemma 3.2} as well. We recall the notation $\rf(N)$ from \eqref{eq:resiudemultiple}.

\subsection{The residue of the Eisenstein series at \texorpdfstring{$s=\frac{3}{4}$}{s = 3/4}}
Let
\begin{align} \label{eq:thetadef}
\theta(\tau) = \sum_{m \in \Z} q^{m^2} = 1 + 2\sum_{m \geq 1} q^{m^2} \in M_{\frac{1}{2}}(4),
\end{align}
be the standard Jacobi theta function. It can be shown that the residue of $\Fc_{\frac{1}{2},4N}^+$ satisfies
\begin{align*}
f_{-1}(\tau) \in M_{\frac{1}{2}}(4N),
\end{align*}
see \cite{duim}*{Section $4$} for example. The Serre--Stark \cite{sest} basis theorem asserts that a basis for the more general space $M_{\frac{1}{2}}(4M)$ with $M \in \N$ is given by (twisted) weight $\frac{1}{2}$ theta functions,  see \cite{thebook}*{Theorem 2.8} too. However, since our level equals $4N$ with $N$ odd and square-free, the space $M_{\frac{1}{2}}(4N)$ is in fact one-dimensional and generated by $\theta$. Alternatively, this can be justified by observing that the Dirichlet $L$-function from Proposition \ref{prop:Kloostermanzetageneral} has a simple pole at $\sfr=1$ if and only if $n=m^2 \geq 0$, since the involved character equals the principal character $\chi_{4N}^2$ of modulus $4N$ if and only if $t=1$ by \eqref{eq:principalcharactertwist}. The precise relation between the residue of $\Fc_{\frac{1}{2},4N}^+$ and $\theta$ is a direct consequence of Proposition \ref{prop:Kloostermanzetaresidue}.
\begin{prop} \label{prop:Eisensteinresidue}
We have
\begin{align*}
f_{-1}(\tau) = \lim_{s \to \frac{3}{4}^+} \left(s-\frac{3}{4}\right) \Fc_{\frac{1}{2},4N}^+(\tau,s) = \rf(N) \theta(\tau).
\end{align*}
\end{prop}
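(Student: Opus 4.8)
The plan is to read the residue directly off the plus-space Fourier expansion in Proposition \ref{prop:Eisensteinprojected} specialized to $k=\frac{1}{2}$, multiply by $s-\frac{3}{4}$, and pass to the limit $s\to\frac{3}{4}^+$ term by term. Every Fourier coefficient there is a product of $\Ks_{\frac{1}{2}}(0,n;2s)$ with Gamma factors and a $W$-Whittaker function, all of which are holomorphic at $s=\frac{3}{4}$; hence the only possible pole comes from the Kloosterman zeta function itself. By Proposition \ref{prop:Kloostermanzetaresidue} this pole occurs precisely when $n=m^2\geq 0$, so all non-square indices and all negative indices drop out of the residue. This already explains why $f_{-1}$ is supported on the squares, matching the support of $\Theta$.

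First I would handle the constant term. The factor $v^{\frac{3}{4}-s}$ accompanying $\Ks_{\frac{1}{2}}(0,0;2s)$ equals $1$ at $s=\frac{3}{4}$, while the Gamma factor $\frac{4^{1-s}\pi i^{-1/2}\Gamma(2s-1)}{\Gamma(s+\frac14)\Gamma(s-\frac14)}$ evaluates (using $\Gamma(\tfrac12)=\sqrt{\pi}$, $\Gamma(1)=1$, $4^{1/4}=\sqrt{2}$, and $i^{-1/2}=\frac{1-i}{\sqrt{2}}$) to $\pi(1-i)$. Combining the prefactor $\frac{2}{3}$, this value, and the residue $\frac{3}{4\pi}(1+i)\rf(N)$ from Proposition \ref{prop:Kloostermanzetaresidue}, and using $(1-i)(1+i)=2$, the constant coefficient of $f_{-1}$ comes out to $\rf(N)$.

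Next I would treat a fixed square index $n=m^2\geq 1$, where $\sgn(n)=1$. At $s=\frac{3}{4}$ the Whittaker function simplifies by the first line of \eqref{eq:whittakerspectral} to $W_{\frac14,\frac14}(4\pi n v)=(4\pi n v)^{1/4}e^{-2\pi n v}$, while $\Gamma(s+\frac14)=\Gamma(1)=1$ and $\vt{n}^{s-1}=n^{-1/4}$. The powers of $n$ and $v$ cancel against $(4\pi n v)^{1/4}$ and $v^{-1/4}$, the surviving powers of $\pi$ combine to $\pi$, and $e^{-2\pi n v}e^{2\pi i n u}=q^n$. Carrying the residue $\frac{3}{2\pi}(1+i)\rf(N)$ through the same arithmetic (again using $(1-i)(1+i)=2$) gives the coefficient $2\rf(N)$ of each $q^{m^2}$. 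Summing over $m\geq 1$ and adding the constant term yields $\rf(N)\big(1+2\sum_{m\geq 1}q^{m^2}\big)=\rf(N)\Theta(\tau)$, as claimed.

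The one genuine point to justify is that multiplying by $s-\frac34$ and letting $s\to\frac34$ commutes with the Fourier summation. This is controlled by the decay $W_{\lambda,\mu}(x)\sim x^{\lambda}e^{-x/2}$ recorded after Proposition \ref{prop:Fourierexpansionsatcusps} together with the polynomial-in-$n$ bound on $\Ks_{\frac{1}{2}}(0,n;2s)$ coming from Weil's bound, which make the expansion locally uniformly convergent near $s=\frac34$; the remaining work is then the purely computational bookkeeping of constants described above. I expect this constant-chasing to be the only real obstacle, and it can even be sidestepped: as already noted before the statement, $f_{-1}\in M_{\frac12}(4N)=\C\,\Theta$, so $f_{-1}$ must be a scalar multiple of $\Theta$ and it suffices to match a single coefficient, e.g.\ the constant term computed above.
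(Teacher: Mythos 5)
Your proposal is correct and follows essentially the same route as the paper: it combines the projected Fourier expansion of Proposition \ref{prop:Eisensteinprojected} with the residue computation of Proposition \ref{prop:Kloostermanzetaresidue} and the Whittaker specialization \eqref{eq:whittakerspectral}, arriving at the contributions $\rf(N)$ from $n=0$ and $2\rf(N)\sum_{m\geq 1}q^{m^2}$ from $n=m^2\geq 1$, exactly as in the paper's proof. Your closing remark that one could instead match a single coefficient using $\dim M_{\frac{1}{2}}(4N)=1$ is also the alternative the authors mention in the discussion preceding the proposition.
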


\begin{rmk}
References for the level $1$ case include \cite{ibsa}*{Proposition 2.3}, \cite{dit11annals}*{Lemma $3$, (5.4)}, \cite{alansa}*{(6.3)}. 
\end{rmk}

\begin{proof}[Proof of Proposition \ref{prop:Eisensteinresidue}]
We combine the results of Proposition \ref{prop:Eisensteinprojected} and Proposition \ref{prop:Kloostermanzetaresidue}. Recall \eqref{eq:whittakerspectral}. If $n = m^2 \geq 1$, this yields a contribution
\begin{align*}
\frac{2}{3}i^{-\frac{1}{2}}\pi^{\frac{3}{4}}(4 \pi)^{\frac{1}{4}} \frac{3}{2\pi}(1+i) \rf(N) \sum_{m \geq 1} q^{m^2} = 2 \rf(N) \sum_{m \geq 1} q^{m^2}.
\end{align*}
If $n = m^2 = 0$, this yields a contribution
\begin{align*}
\frac{2}{3} \frac{4^{1-\frac{3}{4}}\pi i^{-\frac{1}{2}}\Gamma\left(\frac{1}{2}\right)}{\Gamma\left(\frac{3}{4}+\frac{1}{4}\right)\Gamma\left(\frac{3}{4}-\frac{1}{4}\right)} \frac{3}{4\pi}(1+i) \rf(N) = \rf(N).
\end{align*}
Adding both contributions proves the claim.
\end{proof}

\subsection{Proof of Theorem \ref{thm:maingeneral} (i) and (ii)}
Recall that $\Gc(\tau) = f_0(\tau)$ in \eqref{eq:laurentformal}. We are in position to prove Theorem \ref{thm:maingeneral} (i) and (ii). 

\begin{proof}[Proof of Theorem \ref{thm:maingeneral} (i) and (ii)]
Modularity and the growth condition are clear from the Laurent expansion \eqref{eq:laurentformal}, compare the references stated after that equation. By virtue of \eqref{eq:Deltasplitting} and the fact that the $\xi_k$-operators intertwine with the slash operator as well as the plus space projection operator, we deduce that
\begin{align*}
\Delta_{\frac{1}{2}} \Fc_{\frac{1}{2},4N}^+(\tau,s) = \left(s-\frac{1}{4}\right)\left(\frac{3}{4}-s\right)\Fc_{\frac{1}{2},4N}^+(\tau,s),
\end{align*}
and thus
\begin{align} \label{eq:DeltaGresidue}
\Delta_{\frac{1}{2}} \Gc(\tau) = -\frac{1}{2} f_{-1}(\tau).
\end{align}
Since $f_{-1}$ is holomorphic, this implies that $\Gc$ is sesquiharmonic.

To verify the Fourier expansion, we inspect each term in Proposition \ref{prop:Eisensteinprojected} separately.
\begin{enumerate}
\item Clearly, we have
\begin{align*}
\frac{\partial}{\partial s} \frac{2}{3} \left(s-\frac{3}{4}\right)v^{s-\frac{1}{4}} \Big\vert_{s=\frac{3}{4}} = \frac{2}{3} v^{\frac{1}{2}}.
\end{align*}

\item We compute that
\begin{align*}
\frac{\partial}{\partial s} \frac{2}{3} \frac{4^{1-s}\pi i^{-\frac{1}{2}}\Gamma(2s-1)}{\Gamma\left(s+\frac{1}{4}\right)\Gamma\left(s-\frac{1}{4}\right)}  v^{1-s-\frac{1}{4}} \Big\vert_{s=\frac{3}{4}} = \frac{2}{3} (i-1)\pi\left(\log(16v)\right).
\end{align*}
Utilizing Proposition \ref{prop:Kloostermanzetaresidue}, we infer that
\begin{multline*}
\hspace*{\leftmargini} \left(\lim_{s \to \frac{3}{4}^+} \left(s-\frac{3}{4}\right) \Ks_{\frac{1}{2},4N}^+(0,0;2s) \right) \frac{\partial}{\partial s} \frac{2}{3} \frac{4^{1-s}\pi i^{-\frac{1}{2}}\Gamma(2s-1)}{\Gamma\left(s+\frac{1}{4}\right)\Gamma\left(s-\frac{1}{4}\right)}  v^{1-s-\frac{1}{4}} \Big\vert_{s=\frac{3}{4}} \\
= -\rf(N)\log(16v).
\end{multline*}
Furthermore, we have
\begin{align*}
\lim_{s \to \frac{3}{4}^+} \frac{2}{3} \frac{4^{1-s}\pi i^{-\frac{1}{2}}\Gamma(2s-1)}{\Gamma\left(s+\frac{1}{4}\right)\Gamma\left(s-\frac{1}{4}\right)}  v^{1-s-\frac{1}{4}} = \frac{2}{3}(1-i) \pi.
\end{align*}
and hence
\begin{multline*}
\hspace*{\leftmargini} \frac{\partial}{\partial s} \frac{2}{3} \left(s-\frac{3}{4}\right)\frac{4^{1-s}\pi i^{-\frac{1}{2}}\Gamma(2s-1)}{\Gamma\left(s+\frac{1}{4}\right)\Gamma\left(s-\frac{1}{4}\right)} \Ks_{\frac{1}{2},4N}^+(0,0;2s) v^{1-s-\frac{1}{4}} \Big\vert_{s=\frac{3}{4}} \\
= -\rf(N)\log(16v) + \frac{2}{3}(1-i) \pi \cf(0).
\end{multline*}

\item Suppose that $n > 0$. Using \eqref{eq:whittakerspectral}, we obtain
\begin{multline*}
\hspace*{\leftmargini} \left(\lim_{s \to \frac{3}{4}^+} \frac{2}{3} i^{-\frac{1}{2}} \frac{\pi^{s}v^{-\frac{1}{4}} n^{s-1}}{\Gamma\left(s+\frac{1}{4}\right)} W_{\frac{1}{4},s-\frac{1}{2}}(4\pi n v) e^{2\pi i n u} \right) \frac{\partial}{\partial s}  \left(s-\frac{3}{4}\right) \Ks_{\frac{1}{2},4N}^+(0,n;2s) \Big\vert_{s=\frac{3}{4}} \\
= \frac{2}{3}(1-i) \pi \cf(n) q^n.
\end{multline*}
If $n \neq m^2 \geq 1$, the residue of $\Ks_{\frac{1}{2},4N}^+(0,n;2s)$ vanishes by Proposition \ref{prop:Kloostermanzetaresidue}. If $n = m^2 \geq 1$, we employ \eqref{eq:whittakerspectral} and Lemma \ref{lem:preimageofincompletegamma}, getting
\begin{align*}
\hspace*{\leftmargini} \frac{\partial}{\partial s} \frac{2}{3} i^{-\frac{1}{2}} \frac{\pi^{s}v^{-\frac{1}{4}} n^{s-1}}{\Gamma\left(s+\frac{1}{4}\right)} W_{\frac{1}{4},s-\frac{1}{2}}(4\pi n v) e^{2\pi i n u} \Big\vert_{s=\frac{3}{4}}
= \frac{2}{3} (1-i) \pi  \left(\gamma + \log(\pi n) +  \alpha(4nv) \right) q^n.
\end{align*}
According to Proposition \ref{prop:Kloostermanzetaresidue}, we infer that
\begin{multline*}
\hspace*{\leftmargini} \frac{\partial}{\partial s} \left(s-\frac{3}{4}\right) \frac{2}{3} i^{-\frac{1}{2}} \pi^{s}v^{-\frac{1}{4}} \frac{\Ks_{\frac{1}{2},4N}^+(0,n;2s) n^{s-1}}{\Gamma\left(s+\frac{1}{4}\right)} W_{\frac{1}{4},s-\frac{1}{2}}(4\pi \vt{n} v) e^{2\pi i n u}  \Big\vert_{s=\frac{3}{4}} \\
= 2 \delta_{\square}(n) \rf(N) \left(\gamma + \log(\pi n) +  \alpha(4nv) \right) q^n + 
\frac{2}{3}(1-i) \pi \cf(n) q^n,
\end{multline*}
where $\delta_{\square}$ is the indicator function of $n$ being a square.

\item Suppose that $n < 0$. According to Proposition \ref{prop:Kloostermanzetaresidue}, $\Ks_{\frac{1}{2},4N}^+(0,n;2s)$ is regular at $s=\frac{3}{4}$ for every $n < 0$ and thus 
\begin{align*}
\hspace*{\leftmargini} \lim_{s \to \frac{3}{4}^+} \left(s-\frac{3}{4}\right) \Ks_{\frac{1}{2},4N}^+(0,n;2s) = 0
\end{align*}
for every $n < 0$. Utilizing \eqref{eq:whittakerspectral} again, we obtain
\begin{multline*}
\hspace*{\leftmargini} \frac{\partial}{\partial s} \frac{2}{3}\left(s-\frac{3}{4}\right)i^{-\frac{1}{2}}\pi^{s}v^{-\frac{1}{4}} \frac{\Ks_{\frac{1}{2},4N}^+(0,n;2s) n^{s-1}}{\Gamma\left(s-\frac{1}{4}\right)} W_{-\frac{1}{4},s-\frac{1}{2}}(4\pi \vt{n} v) e^{2\pi i n u} \Big\vert_{s=\frac{3}{4}} \\
= \frac{2}{3}\pi^{\frac{1}{2}} (1-i) \cf(n) \Gamma\left(\frac{1}{2},4\pi\vt{n}v\right) q^n.
\end{multline*}
\end{enumerate}
The claim follows by collecting the results of each item and rearranging them.
\end{proof}

\section{Proof of Theorem \ref{thm:maingeneral} \texorpdfstring{$\mathrm{(iii)}$}{(iii)}} \label{sec:proofof1.3part3}

\subsection{The form \texorpdfstring{$\xi_{\frac{1}{2}}\Gc$}{xi1/2G}}
To motivate the result of Proposition \ref{prop:Gshadow} below, we combine \eqref{eq:Deltasplitting}, \eqref{eq:DeltaGresidue} with Proposition \ref{prop:Eisensteinresidue} and the fact that 
\begin{align} \label{eq:xiHc}
\xi_{\frac{3}{2}}\Hc = -\frac{1}{16\pi} \theta,
\end{align}
see \cite{thebook}*{Theorem 6.3}). We obtain
\begin{align} \label{eq:DeltaG}
\xi_{\frac{3}{2}} \xi_{\frac{1}{2}} \Gc(\tau) = \frac{1}{2} f_{-1}(\tau) = \frac{\rf(N)}{2} \theta(\tau) = -\frac{16\pi}{2} \rf(N) \xi_{\frac{3}{2}} \Hc(\tau),
\end{align}
from which we in turn deduce that
\begin{align} \label{eq:gunknown}
\xi_{\frac{1}{2}} \Gc(\tau) = - 8 \pi \rf(N) \Hc(\tau) + g(\tau), \qquad g \in M_{\frac{3}{2}}^+(4N).
\end{align}
The following result computes the Fourier expansion of $g$ in terms of $\Ks_{\frac{1}{2},4N}^+$. In the upcoming subsections, we rewrite $g$ in terms of the generalized Cohen--Eisenstein series and the mock modular form $\Hs_{1,N}$, see \eqref{eq:peiandwangresult} below, which completes the proof of Theorem \ref{thm:maingeneral} (iii).

\begin{prop} \label{prop:Gshadow}
Let $N$ be odd and square-free. Then, we have
\begin{multline*} 
\xi_{\frac{1}{2}} \Gc(\tau) = \frac{1}{3} - \rf(N) v^{-\frac{1}{2}} - 2\sqrt{\pi} \rf(N) \sum_{m \geq 1} m \Gamma\left(-\frac{1}{2},4\pi m^2 v \right) q^{-m^2} \\
- \frac{4\pi(1+i)}{3} \sum_{\substack{n \geq 1 \\ n \equiv 0,3 \pmod*{4}}} \overline{\Ks_{\frac{1}{2},4N}^+\left(0,-n;\frac{3}{2}\right)} n^{\frac{1}{2}} q^{n}.
\end{multline*}
Equivalently, the modular form $g \in M_{\frac{3}{2}}^+\left(4N\right)$ in \eqref{eq:gunknown} has the Fourier expansion
\begin{align*}
g(\tau) = \frac{1}{3} -\frac{2}{3}\pi \rf(N) + \frac{1}{3} \sum_{\substack{n \geq 1 \\ n \equiv 0,3 \pmod*{4}}} \left(24 \pi \rf(N)H_{1,1}(n) - 4\pi(1+i)\overline{\Ks_{\frac{1}{2},4N}^+\left(0,-n;\frac{3}{2}\right)} n^{\frac{1}{2}}\right)q^n.
\end{align*}
\end{prop}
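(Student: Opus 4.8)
The plan is to compute $\xi_{\frac{1}{2}}\Gc$ directly from the explicit Fourier expansion of $\Gc$ in Theorem \ref{thm:maingeneral} (ii), apply the $\xi_{\frac{1}{2}}$-operator termwise, and then compare the result with equation \eqref{eq:gunknown} to extract the Fourier expansion of $g$. The first step is to recall that $\xi_{\frac{1}{2}}$ annihilates holomorphic terms (so the pure powers $q^n$ with $n\geq 0$ and the constant $\cf(n)$-contributions in the holomorphic part drop out) and to record its action on the remaining non-holomorphic pieces. There are three such pieces: the term $\frac{2}{3}v^{\frac12}$, the logarithmic/constant term $-\frac{\log(16v)}{2\pi}\prod_{p\mid N}\frac{1}{p+1}$, and the two families built from $\alpha(4m^2 v)q^{m^2}$ and from $\Gamma\left(\frac12,4\pi|n|v\right)q^n$ for $n<0$.

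First I would apply $\xi_{\frac12}$ to the elementary non-holomorphic terms. Since $\xi_{\frac12}=2iv^{\frac12}\overline{\partial_{\overline\tau}}$ and these depend only on $v$, a direct computation gives $\xi_{\frac12}\left(\tfrac{2}{3}v^{\frac12}\right)=\tfrac13$ and $\xi_{\frac12}\left(-\tfrac{\log(16v)}{2\pi}\right)=\mathrm{const}\cdot v^{-\frac12}$, which after inserting $\rf(N)$ should produce the $\tfrac13$ and $-\rf(N)v^{-\frac12}$ terms. Next, for the $\alpha$-family I would invoke Lemma \ref{lem:preimageofincompletegamma}, which states $\xi_{\frac12}\left(\alpha(4nv)q^n\right)=-\sqrt{\pi n}\,\Gamma\left(-\frac12,4\pi nv\right)q^{-n}$; applied with $n=m^2$ and multiplied by the prefactor $\frac{1}{\pi}\prod_{p\mid N}\frac{1}{p+1}=2\rf(N)$, this yields precisely the $-2\sqrt{\pi}\rf(N)\sum_{m\geq1}m\,\Gamma\left(-\frac12,4\pi m^2 v\right)q^{-m^2}$ term. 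The $\gamma+\log(\pi m^2)$ summand inside that family is holomorphic and is killed by $\xi_{\frac12}$. Finally, for the $n<0$ family I would use that $\xi_k$ sends an incomplete-Gamma non-holomorphic term $\Gamma\left(\frac12,4\pi|n|v\right)q^n$ to a holomorphic term proportional to $\overline{(\cdots)}\,|n|^{\frac12}q^{-n}$; after conjugating the coefficient $\cf(n)=\Ks_{\frac12}(0,n;\tfrac32)$ (using equation \eqref{eq:KloostermanZetaResidueRegular}) and reindexing $n\mapsto -n$, this produces the $-\frac{4\pi(1+i)}{3}\sum_{n\geq1,\,n\equiv 0,3\ (4)}\overline{\Ks_{\frac12}(0,-n;\tfrac32)}\,n^{\frac12}q^n$ term. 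Collecting these four contributions gives the first displayed formula for $\xi_{\frac12}\Gc$.

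For the second displayed formula, the plan is to solve equation \eqref{eq:gunknown} for $g=\xi_{\frac12}\Gc+8\pi\rf(N)\Hc$ and simply add the known expansion of $\Hc$ from equation \eqref{eq:Hcdef}. The non-holomorphic terms should cancel by design: the $-\rf(N)v^{-\frac12}$ and $-2\sqrt{\pi}\rf(N)\sum m\,\Gamma(\cdots)q^{-m^2}$ pieces of $\xi_{\frac12}\Gc$ are exactly $-8\pi\rf(N)$ times the $\frac{1}{8\pi\sqrt v}$ and $\frac{1}{4\sqrt\pi}\sum n\,\Gamma(\cdots)q^{-n^2}$ pieces of $\Hc$, confirming $g\in M_{\frac32}^+(4N)$ is holomorphic. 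The constant terms combine as $\frac13+8\pi\rf(N)\cdot\left(-\frac{1}{12}\right)=\frac13-\frac{2}{3}\pi\rf(N)$, and the positive-index coefficients combine the $H_{1,1}(n)$-term from $\Hc$ (with $24\pi\rf(N)=8\pi\rf(N)\cdot\frac{3}{1}$, extracting the $\frac13$ normalization) with the conjugated Kloosterman-zeta term. I expect the main obstacle to be bookkeeping: matching the precise constants ($i^{-1/2}$, powers of $\pi$ and $4$, the $\Gamma$-factor quotients) across the $\xi_{\frac12}$-computation and confirming the exact cancellation of non-holomorphic parts, together with verifying that the support condition $n\equiv 0,3\pmod 4$ (equivalently $-n\equiv 0,1\pmod 4$, the plus-space condition for the conjugated coefficients) is correctly tracked through the reindexing. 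The modularity and plus-space membership of $g$ are already guaranteed by \eqref{eq:gunknown}, so no separate modularity argument is needed.
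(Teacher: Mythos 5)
Your proposal is correct and follows essentially the same route as the paper: termwise application of $\xi_{\frac{1}{2}}$ to the expansion from Theorem \ref{thm:maingeneral} (ii), with Lemma \ref{lem:preimageofincompletegamma} handling the $\alpha$-family, the standard action of $\xi_{\frac{1}{2}}$ on $\Gamma\left(\frac{1}{2},4\pi\vt{n}v\right)q^n$ together with conjugation of $\cf(n)$ via \eqref{eq:KloostermanZetaResidueRegular} handling the $n<0$ family, and the second formula obtained by adding $8\pi\rf(N)\Hc$ per \eqref{eq:gunknown} and \eqref{eq:Hcdef}. The constants you check (including $\frac{1}{\pi}\prod_{p\mid N}\frac{1}{p+1}=2\rf(N)$ and the constant term $\frac{1}{3}-\frac{2}{3}\pi\rf(N)$) all match the paper's computation.
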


\begin{proof}
We compute $\xi_{\frac{1}{2}}\Gc$ by inspecting the action of $\xi_{\frac{1}{2}}$ on each term Theorem \ref{thm:maingeneral} (ii).
\begin{enumerate}
\item Clearly, we have $\overline{\rf(N)} = \rf(N)$ and
\begin{align*}
\xi_{\frac{1}{2}} \frac{2}{3} v^{\frac{1}{2}} = \frac{1}{3}, \qquad \xi_{\frac{1}{2}} \log(16v) = v^{-\frac{1}{2}}.
\end{align*}

\item Supppose that $n \geq 0$. By holomorphicity, we deduce that
\begin{align*}
\xi_{\frac{1}{2}} \sum_{\substack{n \geq 0 \\ n \equiv 0,1 \pmod*{4}}} \cf(n)q^n = \xi_{\frac{1}{2}} \sum_{m \geq 1 } \rf(N) \left(\gamma + \log\left(\pi m^2\right) \right) q^{m^2} = 0.
\end{align*}
Lemma \ref{lem:preimageofincompletegamma} yields
\begin{align*}
\xi_{\frac{1}{2}} 2 \rf(N) \sum_{m \geq 1 }  \alpha\left(4m^2v\right) q^{m^2} = -2 \pi^{\frac{1}{2}} \rf(N) \sum_{m \geq 1} m \Gamma\left(-\frac{1}{2}, 4\pi m^2v\right)q^{-m^2}.
\end{align*}

\item Let $n < 0$ and recall \eqref{eq:KloostermanZetaResidueRegular}. A standard computation shows that
\begin{align*}
\xi_{\frac{1}{2}} \left[\Gamma\left(\frac{1}{2},4\pi\vt{n}v\right) q^n \right] = -(4\pi \vt{n})^{\frac{1}{2}} q^{-n},
\end{align*}
see the proof of \cite{thebook}*{Theorem 5.10} for example. Consequently, we obtain
\begin{multline*}
\hspace*{\leftmargini} \xi_{\frac{1}{2}} \frac{2}{3} (1-i) \pi^{\frac{1}{2}} \sum_{\substack{n < 0 \\ n \equiv 0,1 \pmod*{4}}}  \cf(n) \Gamma\left(\frac{1}{2},4\pi\vt{n}v\right) q^n \\
= \frac{2}{3} (1+i) \pi^{\frac{1}{2}} \sum_{\substack{n < 0 \\ n \equiv 0,1 \pmod*{4}}} \overline{ \Ks_{\frac{1}{2},4N}^+\left(0,n;\frac{3}{2}\right)} \xi_{\frac{1}{2}} \Gamma\left(\frac{1}{2},4\pi\vt{n}v\right) q^n \\
= -\frac{4\pi(1+i)}{3} \sum_{\substack{n > 0 \\ n \equiv 0,3 \pmod*{4}}} \overline{\Ks_{\frac{1}{2},4N}^+\left(0,-n;\frac{3}{2}\right)} n^{\frac{1}{2}} q^n.
\end{multline*}
\end{enumerate}
This proves the first assertion. We conclude the second assertion by the first assertion, \eqref{eq:gunknown} and the definition of $\Hc$ in \eqref{eq:Hcdef}.
\end{proof}

\subsection{Connection to generalized Hurwitz class numbers}

The numbers $A(2,n)$ and $A(p,n)$ from Proposition \ref{prop:localfactors} appear in Pei and Wang \cite{peiwang} too. Let $r \in \N$ and $p > 2$ be a prime. Pei and Wang \cite{peiwang}*{p.\ $106$} define the numbers
\begin{multline*}
\Ac_r(2,n) \coloneqq 2^{-(2r+1)}\left(1+(-1)^{r}i\right) \\
\times \begin{cases}
\frac{1-2^{(1-2r)\frac{\nu_2(n)-1}{2}}}{1-2^{1-2r}} - 2^{(1-2r)\frac{\nu_2(n)-1}{2}} & \text{if } 2 \nmid \nu_2(n), \\
\frac{1-2^{(1-2r)\frac{\nu_2(n)}{2}}}{1-2^{1-2r}} - 2^{(1-2r)\frac{\nu_2(n)}{2}} & \text{if } 2 \mid \nu_2(n), \frac{(-1)^{r}n}{2^{\nu_2(n)}} \equiv -1 \pmod*{4}, \\
\frac{1-2^{(1-2r)\frac{\nu_2(n)}{2}}}{1-2^{1-2r}} + 2^{(1-2r)\frac{\nu_2(n)}{2}}\left(1+2^{1-r}\left(\frac{\frac{(-1)^{r}n}{2^{\nu_2(n)}}}{2}\right)\right) & \text{if } 2 \mid \nu_2(n), \frac{(-1)^{r}n}{2^{\nu_2(n)}} \equiv 1 \pmod*{4},
\end{cases}
\end{multline*}
as well as
\begin{align*}
\Ac_r(p,n) \coloneqq \begin{cases}
\frac{(p-1)\left(1-p^{(1-2r)\frac{\nu_p(n)-1}{2}}\right)}{p\left(p^{2r-1}-1\right)} - p^{(1-2r)\frac{\nu_p(n)+1}{2}-1} & \text{if } 2 \nmid \nu_p(n), \\
\frac{(p-1)\left(1-p^{(1-2r)\frac{\nu_p(n)}{2}}\right)}{p\left(p^{2r-1}-1\right)} + \left(\frac{\frac{(-1)^{r}n}{p^{\nu_p(n)}}}{p}\right) p^{(1-2r)\frac{\nu_p(n)+1}{2}-\frac{1}{2}} & \text{if } 2 \mid \nu_p(n).
\end{cases}
\end{align*}

\begin{rmk}
We believe that there is a small typo in the definition of $\Ac_{r}(p,n)$ in \cite{peiwang}. Namely, if $2 \mid \nu_p(n)$, the sign of the second term in their definition of $\Ac_r(p,n)$ should be flipped. We adjusted our definition of their numbers accordingly.
\end{rmk}

If $r = 1$, these numbers pertain to the case of weight $\frac{3}{2}$, while our local factors from Propositions \ref{prop:Kloostermanzetageneral}, \ref{prop:localfactors} arise from a weight $\frac{1}{2}$ automorphic form. However, we expect a connection between both weights by \eqref{eq:Kloostermanfunctionalequation}.
\begin{lemma} \label{lem:peiwangconnection}
Suppose that $N$ is odd and square-free.
\begin{enumerate}[label={\rm (\roman*)}]
\item We have
\begin{align*}
A(2,n) = \overline{\mathcal{A}_1(2,-n)}, \qquad A(p,n) = \overline{\mathcal{A}_1(p,-n)} = \mathcal{A}_1(p,-n).
\end{align*}
\item We have
\begin{align*}
\prod_{\substack{p \text{ prime} \\ p \mid N}} \Ac_1(p,n) = \frac{1}{N} \sum_{\ell \mid N} \ell \prod_{\substack{p \text{ prime} \\ p \mid \ell}} \left(\Ac_1(p,n)-\frac{1}{p}\right).
\end{align*}
\end{enumerate}
\end{lemma}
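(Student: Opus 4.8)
The plan is to prove Lemma \ref{lem:peiwangconnection} in two stages corresponding to its two parts. For part (i), I would establish the identities $A(2,n) = \overline{\Ac_1(2,-n)}$ and $A(p,n) = \overline{\Ac_1(p,-n)} = \Ac_1(p,-n)$ by direct comparison of the explicit closed-form evaluations. I already have $A(2,n)$ and $A(p,n)$ from Proposition \ref{prop:localfactors} (i), specialized at $\sfr=1$, and the numbers $\Ac_1(p,n)$ are obtained by setting $r=1$ in Pei and Wang's formulas recalled just above the lemma. The key observation is that $A(p,n)$ depends on $n$ only through $\nu_p(n)$ and the residue/quadratic-residue class of $n/p^{\nu_p(n)}$, and replacing $n$ by $-n$ in $\Ac_1$ shifts these discriminant-type conditions in exactly the way predicted by the functional equation \eqref{eq:Kloostermanfunctionalequation}, which relates $K_{\frac{1}{2}}(-m,-n;c)$ to $K_{\frac{3}{2}}(m,n;c)$. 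Concretely, for odd $p$ the Kronecker symbol $\bigl(\tfrac{(-1)^r n/p^{\nu_p(n)}}{p}\bigr)$ with $r=1$ becomes $\bigl(\tfrac{-n/p^{\nu_p(n)}}{p}\bigr)$, which matches the quadratic character appearing in $A(p,n)$ evaluated at $n$; since these factors are real, the complex conjugation is vacuous and one gets the stated double equality. For $p=2$ the case split on $n/2^{\nu_2(n)}\bmod 4$ or $\bmod 8$ must be matched against the case split for $-n$, and here the conjugation is genuinely needed to send the prefactor $1+(-1)^r i = 1-i$ to $\tfrac{1+i}{4}$-type normalizations; this is a finite verification, one arithmetic progression at a time.

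For part (ii), I would treat the identity
\begin{align*}
\prod_{p \mid N} \Ac_1(p,n) = \frac{1}{N} \sum_{\ell \mid N} \ell \prod_{p \mid \ell} \left(\Ac_1(p,n)-\frac{1}{p}\right)
\end{align*}
as a purely formal multiplicative identity, since $N$ is square-free and both sides are multiplicative over the primes dividing $N$. The natural strategy is to expand the right-hand side by distributing the sum over divisors $\ell \mid N$ as an Euler-type product. Writing $N = \prod_{p \mid N} p$ and using $\ell = \prod_{p \mid \ell} p$, I would recognize the right-hand side as $\frac{1}{N}\prod_{p\mid N}\bigl(1 + p\,(\Ac_1(p,n)-\tfrac1p)\bigr) = \frac{1}{N}\prod_{p\mid N}\bigl(p\,\Ac_1(p,n)\bigr)$, because each prime $p$ either divides $\ell$ (contributing the factor $p(\Ac_1(p,n)-\tfrac1p)$) or does not (contributing $1$). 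The factor $\frac1N = \prod_{p\mid N}\frac1p$ then cancels the leading $p$ in each factor, leaving exactly $\prod_{p\mid N}\Ac_1(p,n)$, which is the left-hand side. This reduces the whole statement to the elementary algebraic identity $1 + p(\Ac_1(p,n)-\tfrac1p) = p\,\Ac_1(p,n)$, valid trivially.

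The main obstacle is almost entirely bookkeeping rather than conceptual depth. Part (ii) is a one-line multiplicative manipulation once the divisor sum is reorganized as an Euler product, so the real labor lies in part (i): carefully aligning the five-way case split for $A(2,n)$ with Pei and Wang's three-way split for $\Ac_1(2,-n)$, keeping track of how $\nu_2(-n)=\nu_2(n)$ but $(-n)/2^{\nu_2(n)}$ flips residue classes modulo $4$ and $8$, and confirming that the complex-conjugation and normalization constants ($\tfrac{1+i}{4}$ versus $2^{-3}(1-i)$) agree in every branch. I would organize this as a case-by-case table, invoking Proposition \ref{prop:localfactors} (i) on the left and the displayed definitions of $\Ac_1(2,n)$, $\Ac_1(p,n)$ on the right, and noting that the odd-prime case is cleaner because all quantities are real and the Kronecker symbol transforms transparently under $n \mapsto -n$. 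The conceptual justification that such a conjugation relation must hold is supplied in advance by the Kloosterman functional equation \eqref{eq:Kloostermanfunctionalequation}, so the explicit check merely confirms the predicted correspondence.
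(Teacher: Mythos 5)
Your proposal is correct and matches the paper's own proof in both parts: (i) is established there by exactly the case-by-case comparison of Proposition \ref{prop:localfactors} with Pei--Wang's formulas that you describe (with equation \eqref{eq:Kloostermanfunctionalequation} as the conceptual backdrop), and (ii) is the same square-free binomial expansion, which the paper writes as $\prod_{p\mid N}\bigl(\Ac_1(p,n)-\tfrac1p+\tfrac1p\bigr)=\sum_{\ell\mid N}\prod_{p\mid\ell}\bigl(\Ac_1(p,n)-\tfrac1p\bigr)\prod_{p\mid\frac{N}{\ell}}\tfrac1p$ while you run the identical distributive identity from the divisor sum back to the Euler product. No gaps.
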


\begin{proof}
\
\begin{enumerate}[label={\rm (\roman*)}]
\item This follows either by a case-by-case inspection of both sides of the claim or by combining \eqref{eq:Kloostermanfunctionalequation} with Proposition \ref{prop:Kloostermanzetageneral}.
\item We have
\begin{align*}
\prod_{\substack{p \text{ prime} \\ p \mid N}} \Ac_1(p,n) = \prod_{\substack{p \text{ prime} \\ p \mid N}} \left(\Ac_1(p,n)-\frac{1}{p}+\frac{1}{p}\right) = \sum_{\ell \mid N} \prod_{\substack{p \text{ prime} \\ p \mid \ell}} \left(\Ac_1(p,n)-\frac{1}{p}\right) \prod_{\substack{p \text{ prime} \\ p \mid \frac{N}{\ell}}} \frac{1}{p},
\end{align*}
which implies the claim directly. Alternatively, this can be deduced from an identity at the bottom of \cite{peiwangbook}*{p.\ 239}. \qedhere
\end{enumerate}
\end{proof}

In the course of proving their Theorem 1 (I), Pei and Wang establish the following identity.
\begin{lemma}[\protect{\cite{peiwang}}] \label{lem:Hurwitzrewrite}
Let $N > 1$ be odd and square-free. Let $\ell \mid N$. Let $n \geq 1$ satisfy $n \equiv 0, 3 \pmod*{4}$. Then, we have
\begin{multline*}
\hspace*{\leftmargini} H_{\ell,N}(n) = \frac{4\pi(1+i)}{12} \prod_{\substack{p \text{ prime} \\ p \mid \ell}} (1-p) \\
\times \frac{L_{4 N} (1, \chi_{t})}{L_{4N} (2, \id)} T_{4N,0}^{\chi_{t}}(m) \Bigg(\prod_{\substack{p \text{ prime} \\ p \mid \ell}} \left(\Ac_1(p,n) - \frac{1}{p}\right) \Bigg) \left(\Ac_1(2,n) + \frac{1-i}{8} \right)n^\frac{1}{2}.
\end{multline*}
\end{lemma}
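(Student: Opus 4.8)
The plan is to read the claimed closed form directly off the arithmetic definition \eqref{eq:HellNdef} of $H_{\ell,N}(n)$; this is exactly the identity Pei and Wang extract while assembling their Eisenstein basis, and the work is a functional-equation step followed by a prime-by-prime comparison of Euler factors. Write $-n = tm^2$ with $t<0$ fundamental, so that $\chi_t$ is an odd primitive character modulo $\vt{t}$ and $n = \vt{t}m^2$. First I would apply the functional equation for $L(s,\chi_t)$ at $s=0$: for odd $\chi_t$ one has $L(0,\chi_t) = \frac{\sqrt{\vt{t}}}{\pi}L(1,\chi_t) = \frac{n^{1/2}}{\pi m}L(1,\chi_t)$ (checked e.g.\ on $t=-4$). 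Since $L_\ell(0,\chi_t) = L(0,\chi_t)\prod_{p\mid\ell}(1-\chi_t(p))$, this already produces the factor $n^{1/2}$, the constant $\tfrac1\pi$ (to be combined with $\zeta(2)=\pi^2/6$ hidden in $L_{4N}(2,\id)$ into the prefactor $\frac{4\pi(1+i)}{12}$), and turns the Dirichlet $L$-value into $L(1,\chi_t)$, which one then completes to $\frac{L_{4N}(1,\chi_t)}{L_{4N}(2,\id)}$ by inserting the missing Euler factors at primes dividing $4N$.

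Next I would exploit that every remaining quantity on both sides is multiplicative in $m$, so the identity reduces to a comparison of local factors at each prime, organized by type. For a good prime $p\nmid 2N$, the combined contribution of the $L$-ratio, the $\zeta(2)$-factor, and $T_{4N,0}^{\chi_t}(p^{\nu_p(m)})$ (evaluated by Lemma \ref{lem:TdivisorSpecial}, resp.\ Lemma \ref{lem:TdivisorSum}) must reproduce the local part of the arithmetic divisor sum $\sum_{a\mid m}\chi_t(a)\sigma_{\ell,N,1}(m/a)$; such primes contribute neither to $\prod_{p\mid\ell}$ nor to $\Ac_1$. For $p\mid N/\ell$ the constraint built into $\sigma_{\ell,N,1}$ collapses the local sum to the single top divisor $p^{\nu_p(m)}$, and the explicit factor $\frac{1-\chi_t(p)p^{-1}}{1-p^{-2}}$ in \eqref{eq:HellNdef} matches the $p$-part of the $L$-ratio (the $T$-factor being $1$ by Lemma \ref{lem:TdivisorSpecial}). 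For $p\mid\ell$ I would insert the explicit value of $\Ac_1(p,n)-\frac1p$ read off from the definition of $\Ac_1(p,n)$, splitting into the two cases according to the parity of $\nu_p(n)$ (that is, whether $p\mid t$): the prefactor $(1-p)$ is precisely what makes the product $(1-p)\bigl(\Ac_1(p,n)-\frac1p\bigr)$, together with the $p$-part of the $L$-ratio, collapse to the Euler factor $(1-\chi_t(p))$ coming from $L_\ell(0,\chi_t)$.

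The genuine content sits at the prime $2$. Here one must match the weight $\frac32$ local Gauss-sum factor $\Ac_1(2,n)+\frac{1-i}{8}$ — whose value splits into the three cases in the definition of $\Ac_1(2,n)$, depending on the parity of $\nu_2(n)$ and the residue of $n/2^{\nu_2(n)}$ modulo $4$ and $8$ — against the $2$-part of the arithmetic divisor sum (recall $2\nmid N$, so $2$ obeys no congruence constraint but carries the half-integral normalization). This is also where the complex factor $(1+i)$ and the phase $\frac{1-i}{8}$ get pinned down, and where the functional-equation constant and the $\zeta(2)$-factor are absorbed. I expect the $2$-adic bookkeeping, tracking the phases $\varepsilon_d$ and the underlying values $a(2^j,\cdot)$ behind $\Ac_1(2,n)$, to be the main obstacle; it is exactly the computation Pei and Wang carry out in \cite{peiwang}, so the cleanest write-up either reproduces their $2$-adic evaluation or cites it after the notational translation above. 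Once all local factors agree, the two closed forms coincide and the lemma follows.
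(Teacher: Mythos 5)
Your proposal is correct and takes essentially the same approach as the paper: the paper's entire proof is the one-line instruction to compare Pei--Wang's formula (4) and the computation on p.~111 of \cite{peiwang} with the definitions \eqref{eq:HellNdef} and \eqref{eq:HNNdef}, which is exactly the functional-equation step ($L(0,\chi_t)=\tfrac{\sqrt{\vt{t}}}{\pi}L(1,\chi_t)$ producing $n^{1/2}$) followed by the prime-by-prime matching of Euler factors at $p\mid \ell$, $p\mid N/\ell$, $p\nmid 2N$, and $p=2$ that you describe. The citation you fall back on for the $2$-adic bookkeeping is precisely the citation the authors use for the whole lemma.
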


\begin{proof}
Compare \cite{peiwang}*{(4)} with \cite{peiwang}*{p.\ 111} and the definition of the numbers $H_{\ell,N}(n)$ in \eqref{eq:HNNdef}, \eqref{eq:HellNdef}. Their proof works for the case $\ell = 1$ too.
\end{proof}

\subsection{Proof of Theorem \ref{thm:maingeneral} (iii)}
Recall Theorem \ref{thm:maingeneral} (i), (ii) and Proposition \ref{prop:Gshadow}.

\begin{proof}[Proof of Theorem \ref{thm:maingeneral} (iii)]
Let $n \geq 1$ satisfy $n \equiv 0, 3 \pmod*{4}$. To prove the claimed expression for $\xi_{\frac{1}{2}}\Gc$, we use Proposition \ref{prop:Kloostermanzetageneral} and have ($-n = tm^2$ with $t$ square-free)
\begin{align*}
\overline{\Ks_{\frac{1}{2},4N}^+\left(0,-n;\frac{3}{2}\right)}
= \overline{\frac{L_{4 N} (1, \chi_{t})}{L_{4N} (2, \id)} T_{4N,0}^{\chi_{t}}(m)
\Bigg(\prod_{\substack{p \text{ prime} \\ p \mid N}} A(p,-n) \Bigg) \cdot \left(  A(2,-n) + \frac{a(4,-n)}{8} \right)}.
\end{align*}
Since the input $s$ as well as all the characters are real, the Dirichlet $L$-functions involved are real-valued. For the same reasons, the function $T_{4N,0}^{\chi_{t}}$ is real-valued. According to Lemma \ref{lem:peiwangconnection} (i) and \eqref{eq:localoddcontribution}, we hence obtain
\begin{align*}
\overline{\Ks_{\frac{1}{2},4N}^+\left(0,-n;\frac{3}{2}\right)} = \frac{L_{4 N} (1, \chi_{t})}{L_{4N} (2, \id)} T_{4N,0}^{\chi_{t}}(m)
\Bigg(\prod_{\substack{p \text{ prime} \\ p \mid N}} \Ac_1(p,n) \Bigg) \left(\Ac_1(2,n) + \frac{1-i}{8} \right).
\end{align*}
We employ Lemmas \ref{lem:peiwangconnection} (ii) and \ref{lem:Hurwitzrewrite}, which yields
\begin{align*}
&4\pi(1+i)\overline{\Ks_{\frac{1}{2},4N}^+\left(0,-n;\frac{3}{2}\right)} n^{\frac{1}{2}} \\
&= 4\pi(1+i)\frac{L_{4 N} (1, \chi_{t})}{L_{4N} (2, \id)} T_{4N,0}^{\chi_{t}}(m)
\Bigg(\frac{1}{N} \sum_{\ell \mid N} \ell \prod_{\substack{p \text{ prime} \\ p \mid \ell}} \left(\Ac_1(p,n)-\frac{1}{p}\right)\Bigg) \left(\Ac_1(2,n) + \frac{1-i}{8} \right) n^{\frac{1}{2}} \\
&= \frac{12}{N} \sum_{\ell \mid N} \ell H_{\ell,N}(n) \prod_{\substack{p \text{ prime} \\ p \mid \ell}} \frac{1}{1-p}.
\end{align*}
We infer that the function $g$ from \eqref{eq:gunknown} and Proposition \ref{prop:Gshadow} can be written as
\begin{align*}
g(\tau) = 4 \prod_{\substack{p \text{ prime} \\ p \mid N}} \frac{1}{p+1} \sum_{n \geq 0} H_{1,1}(n)q^n - \frac{4}{N} \sum_{\ell \mid N} \ell \prod_{\substack{p \text{ prime} \\ p \mid \ell}} \frac{1}{1-p} \sum_{n \geq 1} H_{\ell,N}(n)q^n + \frac{1}{3}.
\end{align*}
We add and subtract $H_{N,N}(0) = L_N(-1,\mathrm{id})$, arriving at
\begin{align} \label{eq:peiandwangresult}
g(\tau) = - \frac{4}{N} \sum_{\substack{\ell \mid N \\ \ell > 1}} \ell \prod_{\substack{p \text{ prime} \\ p \mid \ell}} \frac{1}{1-p} \Hs_{\ell,N}(\tau) + 4 \prod_{\substack{p \text{ prime} \\ p \mid N}} \frac{1}{p+1} \sum_{n \geq 0} H_{1,1}(n)q^n  - \frac{4}{N} \sum_{n \geq 1} H_{1,N}(n)q^n.
\end{align}
This proves the claimed representation of $\xi_{\frac{1}{2}}\Gc$. The second assertion follows immediately from Theorem \ref{thm:maingeneral} (i) and the properties of $\xi_{\frac{1}{2}}$. The third assertion follows from the first one by \eqref{eq:xiHc}.
\end{proof}

\section{Proof of Theorems \ref{thm:mainholomorphic} and \ref{thm:mainnonholomorphic}} \label{sec:proofof1.1and1.2}

The representation of $\xi_{\frac{1}{2}}\Gc$ in Theorem \ref{thm:maingeneral} (iii) isolates the higher level holomorphic modular correction of the mock modular generating function of $H_{1,1}(n)$. In addition to that, it isolates the multiple of the non-holomorphic part of $\Hc$ that completes the mock modular generating function of $H_{1,N}(n)$ to a higher level analog of $\Hc$. We make this precise in the proofs below.
\begin{proof}[Proof of Theorem \ref{thm:mainholomorphic}]
Recall that $g$ appearing in \eqref{eq:gunknown}, \eqref{eq:peiandwangresult} and Proposition \ref{prop:Gshadow} is modular and that Pei and Wang's forms $\Hs_{\ell,N}$ for $1 < \ell \mid N$ are modular on their own. By virtue of \eqref{eq:peiandwangresult}, we deduce that the linear combination 
\begin{align*}
\prod_{\substack{p \text{ prime} \\ p \mid N}} \frac{1}{p+1} \sum_{n \geq 0} H_{1,1}(n)q^n  - \frac{1}{N} \sum_{n \geq 1} H_{1,N}(n)q^n
\end{align*}
is modular as well. Holomorphicity and the growth condition are immediate.

The second assertion follows by the fact that if $N=p$ is an odd prime then the space $E_{\frac{3}{2}}^+(4p)$ is spanned by $\Hs_{p,p}(\tau)$. Consequently, we must have
\begin{align*}
g(\tau) = \frac{\frac{1}{3}\left(1-\frac{1}{p+1}\right)}{-\frac{1}{12} (1-p)} \Hs_{p,p}(\tau).
\end{align*}
Equating this representation of $g$ with \eqref{eq:peiandwangresult} and rearranging concludes the proof.
\end{proof}

For the sake of completeness, we provide two proofs of Theorem \ref{thm:mainnonholomorphic}.
\begin{proof}[Proof of Theorem \ref{thm:mainnonholomorphic}]
The first assertion follows by inserting $\Hc(\tau)$ from \eqref{eq:Hcdef} into Theorem \ref{thm:maingeneral} (iii). Alternatively, the modularity of the form in Theorem \ref{thm:mainholomorphic} implies the modularity of the form in Theorem \ref{thm:mainnonholomorphic} and vice versa (as pointed out in the remark after Theorem \ref{thm:mainnonholomorphic}), while harmonicity and growth conditions are immediate.
The second assertion follows by Proposition \ref{prop:Eisensteinresidue} and \eqref{eq:xiHc}.
\end{proof}

\section{Further questions}
We conclude with some questions for future work. The first was suggested by Michael Griffin, the fourth was suggested by Larry Rolen, and the fifth was suggested by Nikolaos Diamantis.
\begin{enumerate}
\item If we multiply the form in Theorem \ref{thm:mainnonholomorphic} by $\theta$ and apply the holomorphic projection operator along the lines of Mertens' work \cites{mertens14, mertens16}, do we obtain interesting higher level class number relations?
\item What are the Shimura lifts of the modular form in Theorem \ref{thm:mainholomorphic}? Compare \cite{peiwang}*{p.\ 116} for the Shimura lifts of $\Hs_{p,p}$, where $p$ is an odd prime. Are there preimages of the form from Theorem \ref{thm:maingeneral} under the Shintani lifts, e.g.\ along the lines of \cite{alneschw21}*{Remark 1.2}?
\item Are there relations between our coefficients in Theorem \ref{thm:mainholomorphic} and the representation numbers in \cite{luozhou}?
\item Are there interesting congruences satisfied by the coefficients in Theorem \ref{thm:mainholomorphic}?
\item Is there a relation between the non-critical (modular) $L$-values and the generalized Hurwitz class numbers via the sesquiharmonic forms underpinning both?
\item Are there versions of these results using the $\eta$-multiplier? The analogs would involve lifts of $\eta(\tau)$ and Andrews' mock modular generating function of the $\mathrm{spt}(n)$ function \cites{andrews, bringmann08, folono}.
\end{enumerate}

\begin{bibsection}
\begin{biblist}
\bib{alansa}{article}{
   author={Ahlgren, S.},
   author={Andersen, N.},
   author={Samart, D.},
   title={A polyharmonic Maass form of depth $3/2$ for $\slz$},
   journal={J. Math. Anal. Appl.},
   volume={468},
   date={2018},
   number={2},
   pages={1018--1042},
}

\bib{agor15}{article}{
   author={Alfes, C.},
   author={Griffin, M.},
   author={Ono, K.},
   author={Rolen, L.},
   title={Weierstrass mock modular forms and elliptic curves},
   journal={Res. Number Theory},
   volume={1},
   date={2015},
   pages={Paper No. 24, 31},
}

\bib{alneschw21}{article}{
   author={Alfes-Neumann, C.},
   author={Schwagenscheidt, M.},
   title={Shintani theta lifts of harmonic Maass forms},
   journal={Trans. Amer. Math. Soc.},
   volume={374},
   date={2021},
   number={4},
   pages={2297--2339},
}

\bib{andu}{article}{
   author={Andersen, N.},
   author={Duke, W.},
   title={Modular invariants for real quadratic fields and Kloosterman sums},
   journal={Algebra Number Theory},
   volume={14},
   date={2020},
   number={6},
   pages={1537--1575},
}

\bib{anlagrho}{article}{
   author={Andersen, N.},
   author={Lagarias, Jeffrey C.},
   author={Rhoades, Robert C.},
   title={Shifted polyharmonic Maass forms for ${\text {\rm PSL}}_2(\mathbb{Z})$},
   journal={Acta Arith.},
   volume={185},
   date={2018},
   number={1},
   pages={39--79},
}

\bib{andrews}{article}{
   author={Andrews, G. E.},
   title={The number of smallest parts in the partitions of $n$},
   journal={J. Reine Angew. Math.},
   volume={624},
   date={2008},
   pages={133--142},
}

\bib{beckwith2017}{article}{
    AUTHOR = {Beckwith, O.},
     TITLE = {Indivisibility of class numbers of imaginary quadratic fields},
   JOURNAL = {Res. Math. Sci.},
    VOLUME = {4},
      YEAR = {2017},
     PAGES = {Paper No. 20, 11},
}

\bib{bemo2}{webpage}{
   author={Beckwith, O.},
   author={Mono, A.},
   title={A modular framework for generalized Hurwitz class numbers II},
   year={2024},
   url={https://arxiv.org/abs/2411.07962},
   note={preprint},
}

\bib{BOR1}{article}{
   author={Beckwith, O.},
   author={Raum, M.},
   author={Richter, O. K.},
   title={Congruences of Hurwitz class numbers on square classes},
   journal={Adv. Math.},
   volume={409},
   date={2022},
   pages={Paper No. 108663, 19},
}

\bib{BOR2}{article}{
   author={Beckwith, O.},
   author={Raum, M.},
   author={Richter, O. K.},
   title={Nonholomorphic Ramanujan-type congruences for Hurwitz class numbers},
   journal={Proc. Natl. Acad. Sci. USA},
   volume={117},
   date={2020},
   number={36},
   pages={21953--21961},
}

\bib{BOR3}{article}{
   author={Beckwith, O.},
   author={Raum, M.},
   author={Richter, O. K.},
   title={Imaginary quadratic fields with $\ell$-torsion-free class groups and specified split primes},
   journal={Int. Math. Res. Not. IMRN},
   date={2024},
   number={16},
   pages={11582--11596},
}

\bib{bss}{article}{
   author={Bhand, A.},
   author={Shankhadhar, K. D.},
   author={Singh, R. K.},
   title={Maass lifts of half-integral weight Eisenstein series and theta powers},
   journal={Pacific J. Math.},
   volume={321},
   date={2022},
   number={1},
   pages={1--43},
}

\bib{nist}{collection}{
   title={NIST handbook of mathematical functions},
   editor={Boisvert, Ronald F.},
   editor={Clark, Charles W.},
   editor={Lozier, Daniel W.},
   editor={Olver, Frank W. J.},
   note={With 1 CD-ROM (Windows, Macintosh and UNIX)},
   publisher={U.S. Department of Commerce, National Institute of Standards and Technology, Washington, DC; Cambridge University Press, Cambridge},
   date={2010},
}

\bib{bringmann08}{article}{
   author={Bringmann, K.},
   title={On the explicit construction of higher deformations of partition
   statistics},
   journal={Duke Math. J.},
   volume={144},
   date={2008},
   number={2},
   pages={195--233},
}

\bib{brdieh}{article}{
   author={Bringmann, K.},
   author={Diamantis, N.},
   author={Ehlen, S.},
   title={Regularized inner products and errors of modularity},
   journal={Int. Math. Res. Not. IMRN},
   date={2017},
   number={24},
   pages={7420--7458},
}

\bib{brdira}{article}{
   author={Bringmann, K.},
   author={Diamantis, N.},
   author={Raum, M.},
   title={Mock period functions, sesquiharmonic Maass forms, and non-critical values of $L$-functions},
   journal={Adv. Math.},
   volume={233},
   date={2013},
   pages={115--134},
}

\bib{thebook}{book}{
   author={Bringmann, K.},
   author={Folsom, A.},
   author={Ono, K.},
   author={Rolen, L.},
   title={Harmonic Maass forms and mock modular forms: theory and applications},
   series={American Mathematical Society Colloquium Publications},
   volume={64},
   publisher={American Mathematical Society, Providence, RI},
   date={2017},
   pages={xv+391},
}

\bib{brika20}{article}{
   author={Bringmann, K.},
   author={Kane, B.},
   title={An extension of Rohrlich's theorem to the $j$-function},
   journal={Forum Math. Sigma},
   volume={8},
   date={2020},
   pages={Paper No. e3, 33},
}

\bib{brika1}{article}{
   author={Bringmann, K.},
   author={Kane, B.},
   title={Class numbers and representations by ternary quadratic forms with
   congruence conditions},
   journal={Math. Comp.},
   volume={91},
   date={2022},
   number={333},
   pages={295--329},
}

\bib{brika2}{article}{
   author={Bringmann, K.},
   author={Kane, B.},
   title={Sums of class numbers and mixed mock modular forms},
   journal={Math. Proc. Cambridge Philos. Soc.},
   volume={167},
   date={2019},
   number={2},
   pages={321--333},
}

\bib{bringmann-ono07}{article}{
   author={Bringmann, K.},
   author={Ono, K.},
   title={Lifting cusp forms to Maass forms with an application to
   partitions},
   journal={Proc. Natl. Acad. Sci. USA},
   volume={104},
   date={2007},
   number={10},
   pages={3725--3731},
}

\bib{brufu02}{article}{
   author={Bruinier, J. H.},
   author={Funke, J.},
   title={On two geometric theta lifts},
   journal={Duke Math. J.},
   volume={125},
   date={2004},
   number={1},
   pages={45--90},
}

\bib{brufu06}{article}{
   author={Bruinier, J.\ H.},
   author={Funke, J.},
   title={Traces of CM values of modular functions},
   journal={J. Reine Angew. Math.},
   volume={594},
   date={2006},
   pages={1--33},
}

\bib{brufuim}{article}{
   author={Bruinier, J. H.},
   author={Funke, J.},
   author={Imamo\={g}lu, \"{O}.},
   title={Regularized theta liftings and periods of modular functions},
   journal={J. Reine Angew. Math.},
   volume={703},
   date={2015},
   pages={43--93},
}

\bib{bruschw}{article}{
   author={Bruinier, J. H.},
   author={Schwagenscheidt, M.},
   title={Theta lifts for Lorentzian lattices and coefficients of mock theta functions},
   journal={Math. Z.},
   volume={297},
   date={2021},
   number={3-4},
   pages={1633--1657},
}

\bib{cohen75}{article}{
   author={Cohen, H.},
   title={Sums involving the values at negative integers of $L$-functions of quadratic characters},
   journal={Math. Ann.},
   volume={217},
   date={1975},
   number={3},
   pages={271--285},
}

\bib{duim}{article}{
   author={Duke, W.},
   author={Imamo\={g}lu, \"{O}.},
   title={A converse theorem and the Saito-Kurokawa lift},
   journal={Internat. Math. Res. Notices},
   date={1996},
   number={7},
   pages={347--355},
}

\bib{dit11annals}{article}{
   author={Duke, W.},
   author={Imamo\={g}lu, \"{O}.},
   author={T\'{o}th, \'{A}.},
   title={Cycle integrals of the $j$-function and mock modular forms},
   journal={Ann. of Math. (2)},
   volume={173},
   date={2011},
   number={2},
   pages={947--981},
}

\bib{dit21jems}{article}{
   author={Duke, W.},
   author={Imamo\={g}lu, \"{O}.},
   author={T\'{o}th, \'{A}.},
   title={On a class number formula of Hurwitz},
   journal={J. Eur. Math. Soc. (JEMS)},
   volume={23},
   date={2021},
   number={12},
   pages={3995--4008},
}

\bib{dit11imrn}{article}{
   author={Duke, W.},
   author={Imamo\={g}lu, \"{O}.},
   author={T\'{o}th, \'{A}.},
   title={Real quadratic analogs of traces of singular moduli},
   journal={Int. Math. Res. Not. IMRN},
   date={2011},
   number={13},
   pages={3082--3094},
}

\bib{folono}{article}{
   author={Folsom, A.},
   author={Ono, K.},
   title={The $spt$-function of Andrews},
   journal={Proc. Natl. Acad. Sci. USA},
   volume={105},
   date={2008},
   number={51},
   pages={20152--20156},
}

\bib{funke}{article}{
   author={Funke, J.},
   title={Heegner divisors and nonholomorphic modular forms},
   journal={Compositio Math.},
   volume={133},
   date={2002},
   number={3},
   pages={289--321},
}

\bib{goho}{article}{
   author={Goldfeld, D.},
   author={Hoffstein, J.},
   title={Eisenstein series of $\frac{1}{2}$-integral weight and the mean value of real Dirichlet $L$-series},
   journal={Invent. Math.},
   volume={80},
   date={1985},
   number={2},
   pages={185--208},
}

\bib{table}{book}{
   author={Gradshteyn, I. S.},
   author={Ryzhik, I. M.},
   title={Table of integrals, series, and products},
   edition={6},
   note={Translated from the Russian; Translation edited and with a preface by Alan Jeffrey and Daniel Zwillinger},
   publisher={Academic Press, Inc., San Diego, CA},
   date={2000},
}

\bib{gosa}{article}{
   author={Goldfeld, D.},
   author={Sarnak, P.},
   title={Sums of Kloosterman sums},
   journal={Invent. Math.},
   volume={71},
   date={1983},
   number={2},
   pages={243--250},
}

\bib{ibsa}{article}{
   author={Ibukiyama, T.},
   author={Saito, H.},
   title={On zeta functions associated to symmetric matrices, II: Functional equations and special values},
   journal={Nagoya Math. J.},
   volume={208},
   date={2012},
   pages={265--316},
}

\bib{ilt22}{article}{
   author={Imamo\u{g}lu, \"{O}.},
   author={L\"{a}geler, A.},
   author={T\'{o}th, \'{A}.},
   title={The Katok-Sarnak formula for higher weights},
   journal={J. Number Theory},
   volume={235},
   date={2022},
   pages={242--274},
}

\bib{iwa02}{book}{
   author={Iwaniec, H.},
   title={Spectral methods of automorphic forms},
   series={Graduate Studies in Mathematics},
   volume={53},
   edition={2},
   publisher={American Mathematical Society, Providence, RI; Revista Matem\'{a}tica Iberoamericana, Madrid},
   date={2002},
   pages={xii+220},
}

\bib{jagman}{article}{
   author={Jagathesan, T.},
   author={Manickam, M.},
   title={On Shimura correspondence for non-cusp forms of half-integral weight},
   journal={J. Ramanujan Math. Soc.},
   volume={23},
   date={2008},
   number={3},
   pages={211--222},
}

\bib{jkk1}{article}{
   author={Jeon, D.},
   author={Kang, S.-Y.},
   author={Kim, C. H.},
   title={Weak Maass-Poincar\'{e} series and weight 3/2 mock modular forms},
   journal={J. Number Theory},
   volume={133},
   date={2013},
   number={8},
   pages={2567--2587},
}

\bib{koh85}{article}{
   author={Kohnen, W.},
   title={Fourier coefficients of modular forms of half-integral weight},
   journal={Math. Ann.},
   volume={271},
   date={1985},
   number={2},
   pages={237--268},
}

\bib{lagrho}{article}{
   author={Lagarias, J. C.},
   author={Rhoades, R. C.},
   title={Polyharmonic Maass forms for ${\text {\rm PSL}}_2(\mathbb{Z})$},
   journal={Ramanujan J.},
   volume={41},
   date={2016},
   number={1-3},
   pages={191--232},
}

\bib{lischw}{article}{
   author={Li, Y.},
   author={Schwagenscheidt, M.},
   title={Mock modular forms with integral Fourier coefficients},
   journal={Adv. Math.},
   volume={399},
   date={2022},
   pages={Paper No. 108264, 30},
}

\bib{luozhou}{webpage}{
   author={Luo, Y.},
   author={Zhou, H.},
   title={The classification and representations of positive definite ternary quadratic forms of level $4N$},
   year={2025},
   note={Math. Comp.},
   url={https://doi.org/10.1090/mcom/4130},
}

\bib{maass1937}{article}{
   author={Maass, H.},
   title={Konstruktion ganzer Modulformen halbzahliger Dimension mit $\vartheta$-Multiplikatoren in einer und zwei Variabeln},
   language={German},
   journal={Abh. Math. Sem. Univ. Hamburg},
   volume={12},
   date={1937},
   number={1},
   pages={133--162},
}

\bib{maass64}{book}{
   author={Maass, H.},
   title={Lectures on modular functions of one complex variable},
   series={Tata Institute of Fundamental Research Lectures on Mathematics
   and Physics},
   volume={29},
   edition={2},
   note={With notes by Sunder Lal},
   publisher={Tata Institute of Fundamental Research, Bombay},
   date={1983},
   pages={iii+262},
}

\bib{mmrw}{webpage}{
   author={Males, J.},
   author={Mono, A.},
   author={Rolen, L.},
   author={Wagner, I.},
   title={Central $L$-values of newforms and local polynomials},
   year={2023},
   url={https://arxiv.org/abs/2306.15519},
   note={preprint},
}

\bib{mat20}{article}{
   author={Matsusaka, T.},
   title={Polyharmonic weak Maass forms of higher depth for $\slz$},
   journal={Ramanujan J.},
   volume={51},
   date={2020},
   number={1},
   pages={19--42},
}

\bib{mat19}{article}{
   author={Matsusaka, T.},
   title={Traces of CM values and cycle integrals of polyharmonic Maass forms},
   journal={Res. Number Theory},
   volume={5},
   date={2019},
   number={1},
   pages={Paper No. 8, 25},
}

\bib{mertens16}{article}{
   author={Mertens, M. H.},
   title={Eichler-Selberg type identities for mixed mock modular forms},
   journal={Adv. Math.},
   volume={301},
   date={2016},
   pages={359--382},
}

\bib{mertens14}{article}{
   author={Mertens, M. H.},
   title={Mock modular forms and class number relations},
   journal={Res. Math. Sci.},
   volume={1},
   date={2014},
   pages={Art. 6, 16},
}

\bib{mo25}{article}{
   author={Mono, A.},
   title={A modular framework for generalized Hurwitz class numbers III},
   journal={J. Math. Anal. Appl.},
   volume={559},
   date={2026},
   number={1},
   pages={Paper No. 130398},
}

\bib{murakami}{article}{
   author={Murakami, Y.},
   title={Hurwitz class numbers with level and modular correspondences},
   journal={Res. Number Theory},
   volume={9},
   date={2023},
   number={2},
   pages={Paper No. 40, 23},
}

\bib{ono-saad-saikia}{article}{
    AUTHOR = {Ono, K.},
	author={Saad, H.},
	author={Saikia, N.},
     TITLE = {Distribution of values of {G}aussian hypergeometric functions},
   JOURNAL = {Pure Appl. Math. Q.},
    VOLUME = {19},
      YEAR = {2023},
    NUMBER = {1},
     PAGES = {371--407},
}

\bib{peiI}{article}{
   author={Pei, T. Y.},
   title={Eisenstein series of weight $3/2$. I},
   journal={Trans. Amer. Math. Soc.},
   volume={274},
   date={1982},
   number={2},
   pages={573--606},
}

\bib{peiII}{article}{
   author={Pei, T. Y.},
   title={Eisenstein series of weight $3/2$. II},
   journal={Trans. Amer. Math. Soc.},
   volume={283},
   date={1984},
   number={2},
   pages={589--603},
}

\bib{peiwang}{article}{
   author={Wang, X.},
   author={Pei, D.},
   title={A generalization of Cohen-Eisenstein series and Shimura liftings and some congruences between cusp forms and Eisenstein series},
   journal={Abh. Math. Sem. Univ. Hamburg},
   volume={73},
   date={2003},
   pages={99--130},
}

\bib{peiwangbook}{book}{
   author={Wang, X.},
   author={Pei, D.},
   title={Modular forms with integral and half-integral weights},
   publisher={Science Press Beijing, Beijing; Springer, Heidelberg},
   date={2012},
   pages={x+432},
}

\bib{rhowal}{article}{
   author={Rhoades, R. C.},
   author={Waldherr, M.},
   title={A Maass lifting of $\Theta^3$ and class numbers of real and imaginary quadratic fields},
   journal={Math. Res. Lett.},
   volume={18},
   date={2011},
   number={5},
   pages={1001--1012},
}

\bib{schoof87}{article}{
    AUTHOR = {Schoof, R.},
     TITLE = {Nonsingular plane cubic curves over finite fields},
   JOURNAL = {J. Combin. Theory Ser. A},
    VOLUME = {46},
      YEAR = {1987},
    NUMBER = {2},
     PAGES = {183--211},
}

\bib{selberg65}{article}{
   author={Selberg, A.},
   title={On the estimation of Fourier coefficients of modular forms},
   conference={
      title={Proc. Sympos. Pure Math., Vol. VIII},
   },
   book={
      publisher={Amer. Math. Soc., Providence, R.I.},
   },
   date={1965},
   pages={1--15},
}

\bib{sest}{article}{
   author={Serre, J.-P.},
   author={Stark, H. M.},
   title={Modular forms of weight $1/2$},
   conference={
      title={Modular functions of one variable, VI},
      address={Proc. Second Internat. Conf., Univ. Bonn, Bonn},
      date={1976},
   },
   book={
      series={Lecture Notes in Math.},
      volume={Vol. 627},
      publisher={Springer, Berlin-New York},
   },
   date={1977},
   pages={27--67},
}

\bib{shimura75annals}{article}{
   author={Shimura, G.},
   title={On modular forms of half integral weight},
   journal={Ann. of Math. (2)},
   volume={97},
   date={1973},
   pages={440--481},
}

\bib{shimura75procl}{article}{
   author={Shimura, G.},
   title={On the holomorphy of certain Dirichlet series},
   journal={Proc. London Math. Soc. (3)},
   volume={31},
   date={1975},
   number={1},
   pages={79--98},
}

\bib{siegel56}{article}{
   author={Siegel, C.\ L.},
   title={Die Funktionalgleichungen einiger Dirichletscher Reihen},
   journal={Math. Z.},
   volume={63},
   date={1956},
   pages={363--373},
}

\bib{stein}{book}{
	author={Stein, W.},
	title={Modular forms, a computational approach},
	series={Graduate Studies in Mathematics},
	volume={79},
	note={With an appendix by Paul E. Gunnells},
	publisher={American Mathematical Society, Providence, RI},
	date={2007},
	pages={xvi+268},
}

\bib{sturm}{article}{
   author={Sturm, J.},
   title={Special values of zeta functions, and Eisenstein series of half integral weight},
   journal={Amer. J. Math.},
   volume={102},
   date={1980},
   number={2},
   pages={219--240},
}

\bib{wag}{article}{
   author={Wagner, I.},
   title={Harmonic Maass form eigencurves},
   journal={Res. Math. Sci.},
   volume={5},
   date={2018},
   number={2},
   pages={Paper No. 24, 16},
}

\bib{walker}{article}{
   author={Walker, A.},
   title={Self-correlations of Hurwitz class numbers},
   journal={Algebra Number Theory},
   volume={19},
   date={2025},
   number={12},
   pages={2433--2470},
}

\bib{wong}{article}{
   author={Wong, K. L.},
   title={Sums of quadratic functions with two discriminants},
   journal={J. Number Theory},
   volume={192},
   date={2018},
   pages={181--196},
}

\bib{zagier75}{article}{
   author={Zagier, D.},
   title={Nombres de classes et formes modulaires de poids $3/2$},
   language={French, with English summary},
   journal={C. R. Acad. Sci. Paris S{\'e}r. A-B},
   volume={281},
   date={1975},
   number={21},
   pages={Ai, A883--A886},
}

\bib{zagier76}{article}{
   author={Zagier, D.},
   title={On the values at negative integers of the zeta-function of a real quadratic field},
   journal={Enseign. Math. (2)},
   volume={22},
   date={1976},
   number={1-2},
   pages={55--95},
}

\bib{zwegers}{thesis}{
   author={Zwegers, S.},
   title={Mock theta functions},
   type={Ph.D. Thesis},
   organization={Universiteit Utrecht},
   date={2002},
}

\bib{oeis}{webpage}{
  author={OEIS Foundation Inc.}, 
  title={The On-Line Encyclopedia of Integer Sequences},
  year={2024},
  url={https://oeis.org},
}

\bib{sage}{misc}{
  author = {W. A. Stein et al.},
  title = {Sage Mathematics Software},
  note = {The Sage Development Team, Version 9.3, \url{https://www.sagemath.org/}},
  year= {2022},
}
\end{biblist}
\end{bibsection}

\end{document}